\newcommand{\be}{\begin{equation}}
\newcommand{\ee}{\end{equation}}
\newcommand{\beq}{\begin{eqnarray}}
\newcommand{\eeq}{\end{eqnarray}}
\newcommand{\cM}{\mathcal{M}}
\def\g{\mathfrak g}
\def\H{{\Bbb H}}
\def\R{{\mathfrak R}}
\newtheorem{prop}{Proposition}[section]
\newtheorem{theo}[prop]{Theorem}
\newtheorem{lemm}[prop]{Lemma}
\newtheorem{rema}[prop]{Remark}
\newtheorem{defi}[prop]{Definition}
\def\tr{\mathrm{tr}}
\def\o{\omega}
\def\p{\partial}
\def\a{\alpha}
\def\b{\beta}
\def\g{\gamma}
\def\d{\delta}
\def\k{\kappa}
\def\l{\lambda}
\def\S{\Sigma}
\def\RR{\mathbb R}\def\R{\mathbb R}
\def\H{\mathbb H}
\def\<{\langle}
\def\>{\rangle}
\def\div{{\rm div}}
\def\H{{\mathbb H}}
\def\begeq{\begin{equation}}
\def\endeq{\end{equation}}
\def\p{\partial}
\def\R{\Bbb R}
\def\tr{{\rm tr}}
\def\d{\delta}
\def\a{\alpha}
\def\b{\beta}
\def\odot{\setbox0=\hbox{$\bigcirc$}\relax \mathbin {\hbox
to0pt{\raise.5pt\hbox to\wd0{\hfil $\wedge$\hfil}\hss}\box0 }}
\numberwithin{equation} {section}
\begin{document}

\title[Penrose inequality] {A penrose inequality for graphs over Kottler space}

\author{Yuxin Ge}
\address{Laboratoire d'Analyse et de Math\'ematiques Appliqu\'ees,
CNRS UMR 8050,
D\'epartement de Math\'ematiques,
Universit\'e Paris Est-Cr\'eteil Val de Marne, \\61 avenue du G\'en\'eral de Gaulle,
94010 Cr\'eteil Cedex, France}
\email{ge@u-pec.fr}

\author{Guofang Wang}
\address{ Albert-Ludwigs-Universit\"at Freiburg,
Mathematisches Institut,
Eckerstr. 1,
79104 Freiburg, Germany}
\email{guofang.wang@math.uni-freiburg.de}
\thanks{GW and JW are partly supported by SFB/TR71 ``Geometric partial differential equations'' of DFG}
\author{Jie Wu}
\address{School of Mathematical Sciences, University of Science and Technology
of China Hefei 230026, P. R. China
\and
 Albert-Ludwigs-Universit\"at Freiburg,
Mathematisches Institut
Eckerstr. 1,
79104 Freiburg, Germany
}
\email{jie.wu@math.uni-freiburg.de}
\author{Chao Xia}\address{Max-Planck-Institut f\"ur Mathematik in den Naturwissenschaft, Inselstr. 22, D-04103, Leipzig, Germany}
\begin{abstract} In this work, we prove an optimal  Penrose  inequality for asymptotically locally hyperbolic manifolds which can be realized as graphs 
over Kottler space. Such inequality relies heavily on an optimal weighted Alexandrov-Fenchel inequality for the mean convex star shaped hypersurfaces  in Kottler space.
\end{abstract}

\thanks{Part of this work was done while CX was visiting the mathematical institute of Albert-Ludwigs-Universit\"at Freiburg. He would like to thank the institute for its hospitality.
}
\email{chao.xia@mis.mpg.de}

\maketitle
\section{Introduction}
The famous Penrose inequality (conjecture) in general relativity, as a refinement of the positive mas theorem (\cite{SY}, \cite{W}),
states that the total mass of a space time is no less than the mass of its black holes which are measured by the area of its event horizons. When the cosmological constant $\Lambda=0$,
its Riemannian version  reads that  an asymptotically flat manifold $(\cM^n,g)$  with an outermost minimal boundary $\Sigma$ (a horizon) has the ADM mass
\begin{equation}\label{eq001}
m_{ADM}\geq \frac 12\left(\frac{|\Sigma|}{\omega_{n-1}}\right)^{\frac{n-2}{n-1}},
\end{equation}
provided that the dominant condition $R_g\ge 0$ holds.
Here $R_g$ is the scalar curvature of  $(\cM^n,g)$, $|\Sigma|$ is the area of $\Sigma$ and $\omega_{n-1}$ is the area of the unit $(n-1)$-sphere. Moreover, equality holds if and only if $(\cM,g)$ is isometric to the exterior Schwarzschild solution. For the case $n=3$, (\ref{eq001})  was
  proved by Huisken-Ilmanen \cite{HI} using the inverse mean curvature flow
 and by Bray \cite{BP} using a conformal flow.
Later, Bray's proof was generalized by Bray and Lee \cite{BL} to the case  $n\le 7$.  For related results and further development, see  the excellent surveys \cite{Bray}, \cite{Mars}.
Recently  Lam \cite{Lam}
 gave an elegant proof of $(\ref{eq001})$
for  asymptotically flat graphs over  $\R^{n}$ for all dimensions. 
 His proof was later extended
in \cite{dLG1,HuangWu, MV}. Very recently,
 a general Penrose inequality for a  higher order mass was conjectured in \cite{GWW1}, which is true
 for the graph cases  \cite{GWW1, LWX2} and conformally flat cases  \cite{GWW2}.

In recent years, there has been great interest to extend the previous results to a spacetime with a negative cosmological constant $\Lambda<0$.
In the time symmetric case, $(\cM^n,g)$ is now an asymptotically  hyperbolic  manifold with an outermost minimal boundary $\Sigma$.  In the asymptotically hyperbolic manifolds, a mass-like invariant, which generalizes the ADM mass,  was introduced by
Chru\'sciel, Herzlich and Nagy \cite{CH,CN,Herzlich}. See also an earlier contribution by Wang \cite{Wang} for the special case of conformally compact manifolds. For this mass
$m^\H$ the corresponding Penrose conjecture is
\begin{equation} \label{eq002}
m^{\H}\geq
\frac{1}{2}\left\{
\left( \frac{|\Sigma|}{\omega_{n-1}} \right)^{\frac{n-2}{n-1}}
+ \left(\frac{|\Sigma|}{\omega_{n-1}}\right)^{\frac{n}{n-1}}
\right\},
\end{equation}
provided that the dominant energy condition
$
R_g \geq -n(n-1)
$
holds. This is a  very difficult problem. Neves \cite{Neves} showed that the powerful inverse mean curvature flow of Huisken-Ilmanen
\cite{HI} alone could not work for proving  \eqref{eq002}.
 For the special case that the asymptotically  hyperbolic  manifold can be represented by a graph over the hyperbolic space $\H^n$, Dahl-Gicquaud-Sakovich \cite{DGS}
 and de Lima-Gir\~ao \cite{dLG3} proved this conjecture with a help of a sharp Alexandrov-Fenchel inequality for a weighted mean curvature integral in $\H^n$.
Recently there have been many contributions in establishing Alexandrov-Fenchel inequalities in $\H^n$, see \cite{BHW, GWW3,GWW4,LWX,WX}.
Penrose inequalities for the Gauss-Bonnet-Chern mass have been studied in \cite{GWW1, GWW4}.

In this paper we are interested in studying asymptotically locally hyperbolic (ALH) manifolds.
Let us first introduce the locally hyperbolic  metrics. Fix $\kappa=\pm 1,0$ and suppose $(N^{n-1},\hat g)$ is a closed space form of sectional curvature $\kappa$. Consider
 the product manifold $P_{\kappa}=I_{\kappa}\times N$, where $I_{-1}=(1,+\infty)$ and $I_0=I_1=(0,\infty)$ endowed with the
 warped product metric
\begin{equation}
b_{\kappa}=\frac{d{\rho}^2}{V_{\kappa}^2(\rho)}+{\rho}^2\hat g,\quad \rho\in I_{\kappa},\quad\mbox{and}\quad V_{\kappa}(\rho)=\sqrt{{\rho}^2+\kappa}.
\end{equation}
One can  easily check that the sectional curvature of $(P_{\kappa},b_{\kappa})$ equals to $-1$ and thus it is called {\it locally hyperbolic}. Note that in the case $\kappa=1$ and $(N,\hat g)$ is a round sphere, $(P_{\kappa},b_{\kappa})$ is exactly the hyperbolic space. Since there are a lot of work on the case that $\kappa=1$ and $(N,\hat g)$ is a round sphere, see the work mentioned above, we will in principle focus on the remaining case, the locally hyperbolic case.
Namely, $\kappa=-1, 0$ or $\kappa=1$ and $N$ is a space form other than the standard sphere.   In this case, the mass defined by \eqref{AHmass} below is a geometric invariant.
(See Section 3 in \cite{CH}). In order to define this mass,
we recall from \cite{CH} the following definition of ALH manifolds.

\begin{defi}\label{ALH}
A Riemannian manifold $(\cM^n,g)$ is called asymptotically locally hyperbolic (ALH)  if there exists a compact subset $K$ and a diffeomorphism at infinity $\Phi:\cM\setminus K\rightarrow N\times({\rho}_0,+\infty)$, where ${\rho}_0>1$ such that
\begin{equation}\label{decaytau}
\|(\Phi^{-1})^{\ast}g-b_{\kappa}\|_{b_{\kappa}}+\|\nabla^{b_{\kappa}}\left((\Phi^{-1})^{\ast}g\right)\|_{b_{\kappa}}=O({\rho}^{-\tau } ),\quad\tau>\frac n2,
\end{equation}
and
\begin{equation}\label{condition}
\int_{
\cM}V_{\kappa}\;|R_g+n(n-1)| d V_g<\infty.
\end{equation}
\end{defi}

 Then a mass type invariant of $(\cM^n,g)$ with respect to $\Phi$, which we call ALH mass,  can be defined by
\begin{equation}\label{AHmass}
m_{(\cM,g)}=c_n\lim_{\rho\rightarrow\infty}\int_{N_{\rho}} \bigg(V_{\kappa}(\div^{b_{\kappa}} e-d\,\tr^{b_{\kappa}} e)+(\tr^{b_{\kappa}} e)dV_{\k}-e(\nabla^{b_{\kappa}} V_{\kappa},\cdot)\bigg)\nu d\mu,
\end{equation}
where $e:= (\Phi^{-1})^{\ast}g-b_{\kappa}$, $N_{\rho}=\{\rho\}\times N$, $\nu$ is the outer normal of $N_{\rho}$ induced by $b_{\k}$
and $d\mu$ is the area  element with respect to  the induced metric on $N_{\rho}$, $\vartheta_{n-1}$ is the area of $N$
$$\vartheta_{n-1}=|N| \quad\hbox{ and } \quad
c_n=\frac{1}{2(n-1)\vartheta_{n-1}}.$$

For this mass, there is a corresponding  Penrose conjecture.

\

\noindent{\bf Conjecture 1.} {\it Let $(\cM, g)$ be an ALH manifold with an outermost minimal  horizon $\Sigma$. Then the mass
 \[
m_{(\cM,g)}\ge \frac{1}{2}\left(\left(\frac{|\Sigma|}{\vartheta_{n-1}}\right)^{\frac{n}{n-1}}
+\kappa \left(\frac{|\Sigma|}{\vartheta_{n-1}}\right)^{\frac{n-2}{n-1}}\right),
\]
provided that $\cM$ satisfies the dominant condition
\begin{equation}\label{D}
R_g+n(n-1)\ge 0.\end{equation}
Moreover, equality holds if and only if $(\cM, g)$ is a Kottler space.
}

\

The Kottler space, or Kottler-Schwarzschild space, is an analogue of the Schwarzschild space
in the context of asymptotically locally hyperbolic manifolds  which is introduced as follows. We consider the metric
\begin{equation}\label{eq g}
g_{\kappa,m}=\frac{d{\rho}^2}{V_{\kappa,m}^2(\rho)}+{\rho}^2\hat g, \quad {V_{\kappa,m}}=\sqrt{{\rho}^2+\kappa-\frac{2m}{{\rho}^{n-2}}}.
\end{equation}
Let ${\rho}_{\kappa,m}$ be the largest positive root of $V_{\kappa,m}.$  Then the triple $$\left(P_{\kappa,m}= [{\rho}_{\kappa,m}, +\infty)\times N, \;g_{\kappa,m},\;V_{\kappa,m}\right)$$ is a complete vacuum static data set with the negative cosmological constant $-n$ which satisfies
\begin{equation}\label{static}
\bar\Delta V_{\kappa,m}g_{\kappa,m}-\bar\nabla^2 V_{\kappa,m}+V_{\kappa,m}Ric_{g_{\kappa,m}}=0\quad\mbox{and}\quad R_{g_{\kappa,m}}=-n(n-1).
\end{equation}
We remark here that throughout the all paper, $\bar\Delta $ and $\bar\nabla$ denote the Laplacian and covariant derivative   with respect to the metric $g_{\k,m}$.

Remark that in (\ref{eq g}) if $\kappa\ge 0$, the parameter $m$ is always positive; if $\kappa=-1$, the parameter $m$ can be negative. In fact, $m$ belongs to the following interval
\beq\label{interval}
m\in[m_c,+\infty)\quad\mbox{and}\quad m_{c}=-\frac{(n-2)^{\frac{n-2}{2}}}{n^{\frac n2}}.
\eeq
Comparing with the case of  the asymptotically  hyperbolic, this is a new and interesting situation. The corresponding positive mass theorem looks now like

\

\noindent{\bf Conjecture 2.} {\it Let $(\cM, g)$ be an ALH manifold ($\kappa=-1$ case without boundary). Then the mass
 \[
m_{(\cM,g)} \ge m_c=-\frac{(n-2)^{\frac{n-2}{2}}}{n^{\frac n2}},
\]
provided that $\cM$ satisfies the dominant condition \eqref{D}.

}

\

These problems were first studied by Chru\'sciel-Simon  \cite{CS}. Recently, Lee and Neves \cite{LN} used the powerful inverse mean curvature flow to obtain a Penrose inequality for 3 dimensional conformally compact ALH  manifolds if the mass  $m \le  0$. Roughly speaking, they managed to show that
the inverse mean curvature flow of Huisken and Ilmanen does work for ALH with $\kappa=0, -1$, though Neves \cite{Neves} has previously showed that  it alone does
not work for the asymptotically hyperbolic manifolds, i.e., $\kappa=1$.  Very recently,  de Lima and Gir\~ao \cite{dLG4} proved Conjecture 1 for a class of graphical ALH for all dimenions $n\ge 3$, in the range $m\in [0,\infty).$

Motivated by these work and our previous wok on the Gauss-Bonnet-Chern mass, in this paper we want to to show Conjecture 1 for a class of graphical ALH for all dimensions $n\ge 3$, in the full range
\[m\in [m_c,\infty)=[-\frac{(n-2)^{\frac{n-2}{2}}}{n^{\frac n2}}, \infty).\]

In order to state our results, let us introduce the corresponding Kottler-Schwarzschild  spacetime in general relativity
$$-V_{\kappa,m}^2dt^2+g_{\kappa,m}.$$
We consider its Riemannian version, namely $Q_{\kappa,m}=\R\times P_{\kappa,m}$ with the metric
\begin{equation}
\tilde g_{\kappa,m}=V_{\kappa,m}^2dt^2+g_{\kappa,m}.
\end{equation}
It is well-known  that $\tilde g_{\kappa,m}$ is an Einstein metric, i.e.
$$Ric_{\tilde g_{\kappa,m}}+n\tilde g_{\kappa,m}=0,$$
which actually follows from  \eqref{static}. Now let $m$ be a any fixed number
\[m\in [m_c,\infty).\]
We identity $P_{\kappa,m}$ with the  slice $\{0\}\times P_{\kappa,m} \subset Q_{\kappa,m}$ and consider
 a graphs  over $P_{\kappa,m}$ or over a subset $P_{\kappa,m}\backslash \Omega$, where $\Omega$ is a compact smooth subset containing   $\{0\}\times \partial P_{\kappa,m} $.
An ALH graph associated to
 a smooth function $f: P_{\kappa,m}\backslash \Omega \to \mathbb R$
 is  a manifold $\cM^n$ with the induced metric from  $(Q_{\kappa,m},\tilde g_{\kappa,m})$, i.e.
\begin{equation}\label{graphmetric}
g=V_{\kappa,m}^2(\rho)\bar\nabla f\otimes\bar\nabla f+g_{\kappa,m},
\end{equation}
such that $\cM^n=(P_{\kappa,m}\backslash \Omega,g)$ is ALH  in the sense of Definition \ref{ALH}.

We now state the main results of this paper.

\begin{theo}\label{mainthm1}
Suppose $\cM\subset Q_{\kappa,m}$ is an ALH graph over $P_{\kappa,m}$ with inner boundary $\Sigma$, associated to a function $f:P_{\kappa,m}\backslash \Omega\to \R$.
Assume that $\Sigma$ is in a level set of $f$ and $|\bar\nabla f(x)|\rightarrow\infty$ as
$x\rightarrow\Sigma$.
 Then we have
\begin{equation}\label{eq1_thm1}
m_{(\cM,g)}=m+c_n\int_{\cM}\langle\frac{\partial}{\partial t}, \xi\rangle(R_g+n(n-1))dV_g+c_n\int_{\Sigma}V_{\kappa,m}Hd\mu,
\end{equation}
where $H$ is the mean curvature of $\Sigma$  in $(P_{\kappa,m},g_{\kappa,m})$ and $\xi$ is the unit outer normal of $(\cM,g)$ in $(Q_{\kappa,m},\tilde g_{\kappa,m})$.

Moreover, if in addition the dominant energy condition
$$R_g+n(n-1)\geq 0$$holds,  we have
\beq\label{eq2_thm1}
m_{(\cM,g)}\geq m+c_n\int_{\Sigma}V_{\kappa,m}Hd\mu.
\eeq
\end{theo}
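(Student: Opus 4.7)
The plan is to rewrite $R_g + n(n-1)$ on the graph as a divergence on $P_{\kappa,m}\backslash\Omega$ and then apply Stokes' theorem. The starting point is the Gauss equation for the hypersurface $\cM \subset (Q_{\kappa,m},\tilde g_{\kappa,m})$ combined with the Einstein condition $Ric_{\tilde g_{\kappa,m}} + n\tilde g_{\kappa,m} = 0$ (equivalent to \eqref{static}), which yields the pointwise identity
\[
R_g + n(n-1) = H_{\cM}^2 - |A_{\cM}|^2,
\]
where $H_{\cM}$ and $A_{\cM}$ are the mean curvature and second fundamental form of the graph in $(Q_{\kappa,m},\tilde g_{\kappa,m})$. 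A direct calculation gives the upper unit normal
\[
\xi = \frac{1}{V_{\kappa,m}W}\frac{\partial}{\partial t} - \frac{V_{\kappa,m}}{W}\bar\nabla f, \qquad W := \sqrt{1 + V_{\kappa,m}^2|\bar\nabla f|^2},
\]
so that $\langle \partial/\partial t,\xi\rangle = V_{\kappa,m}/W$ and $dV_g = W\,dV_{g_{\kappa,m}}$; therefore $\langle \partial/\partial t,\xi\rangle(R_g+n(n-1))\,dV_g = V_{\kappa,m}(H_{\cM}^2-|A_{\cM}|^2)\,dV_{g_{\kappa,m}}$, which is what must be integrated over $P_{\kappa,m}\backslash\Omega$.

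The next step is to display $V_{\kappa,m}(H_{\cM}^2 - |A_{\cM}|^2)\,dV_{g_{\kappa,m}}$ as the $g_{\kappa,m}$-divergence of an explicit 1-form $\omega$ built from $f$, $\bar\nabla f$, $V_{\kappa,m}$ and $\bar\nabla V_{\kappa,m}$. The graph formulas express $A_{\cM}$ in terms of $\bar\nabla^2 f$ and $\bar\nabla V_{\kappa,m}$, and the static relation \eqref{static} allows one to trade $\bar\nabla^2 V_{\kappa,m}$ for curvature terms; the resulting expression is reorganised into a divergence in the spirit of Lam's identity \cite{Lam} and its extensions to static backgrounds \cite{dLG3,dLG4}. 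Applying Stokes' theorem on $\{\rho \le R\}\setminus U_\epsilon(\Sigma)$ then produces two boundary terms.

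The asymptotic flux of $\omega$ is identified by comparing with the mass integrand in \eqref{AHmass}. Because the mass functional is defined against $b_{\kappa}$ while the divergence is taken with respect to $g_{\kappa,m}$, the shift $g_{\kappa,m} - b_{\kappa} = O(\rho^{-(n-2)})$ contributes, thanks to \eqref{decaytau} and \eqref{condition}, precisely the Kottler mass $m$, producing the summand $m$ in \eqref{eq1_thm1}. The inner flux is the main technical point: by hypothesis $|\bar\nabla f|\to\infty$ as $x\to\Sigma$, so the graph becomes vertical and $\bar\nabla f/|\bar\nabla f|$ converges to the outward unit normal of $\Sigma$ in $(P_{\kappa,m},g_{\kappa,m})$. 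A careful asymptotic expansion of $\omega$ near $\Sigma$, tracking the divergent factor $W\to\infty$ and the cancellations that ensue, should reduce the inner flux to $c_n\int_{\Sigma} V_{\kappa,m} H\,d\mu$, as in \cite{dLG4}. Combining the two boundary contributions yields \eqref{eq1_thm1}. The inequality \eqref{eq2_thm1} is then immediate: choosing $\xi$ as the upper normal gives $\langle \partial/\partial t,\xi\rangle = V_{\kappa,m}/W\ge 0$, so under the dominant energy condition the bulk integral in \eqref{eq1_thm1} is nonnegative. The hardest step, I expect, will be the uniform treatment of the inner boundary across the full range $m\in[m_c,\infty)$, including negative $m$, where the Kottler horizon geometry is less standard than in the $m\ge 0$ case already treated in the literature.
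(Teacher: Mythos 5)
Your proposal follows essentially the same route as the paper's proof: both rewrite $R_g + n(n-1) = 2S_2 = H_{\cM}^2 - |A_{\cM}|^2$ via the Gauss equation and the Einstein condition on $\tilde g_{\kappa,m}$, express the weighted integrand as a divergence (the paper via the Newton-tensor flux identity $\div_g\bigl(T_1(\tfrac{\partial}{\partial t})^T\bigr) = 2S_2\langle\tfrac{\partial}{\partial t},\xi\rangle + Ric_{\tilde g_{\kappa,m}}(\xi,(\tfrac{\partial}{\partial t})^T)$ on $\cM$, which under the graph identification is exactly your base-level $\div^{g_{\kappa,m}}\omega$), integrate and match the outer flux with the mass functional — using the static equation \eqref{static} in place of $b_\kappa$, as in Lemma \ref{lemm2.1} — and the inner flux with $c_n\int_\Sigma V_{\kappa,m}H\,d\mu$. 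The inner-boundary limit that you flag as the hardest step is indeed also the least detailed part of the paper's proof, where it is handled by the same verticality argument ($|\bar\nabla f|\to\infty$ makes the prefactor $V_{\kappa,m}^2|\bar\nabla f|^2/(1+V_{\kappa,m}^2|\bar\nabla f|^2)\to 1$) combined with the explicit shape-operator formula; the sign of $m$ enters only through $\rho_{\kappa,m}$ and $V_{\kappa,m}$ and does not alter this limiting computation.
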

\begin{rema}
For any ALH graph over the whole $P_{\k,m}$, we have
\beq
m_{(\cM,g)}\geq m \ge m_c,
\eeq
provided that the dominant energy condition $R_g+n(n-1)\geq 0$ holds, since in this case
$$ m_{(\cM,g)}=m+c_n\int_{\cM}\langle\frac{\partial}{\partial t}, \xi\rangle(R_g+n(n-1))dV_g\geq m\geq m_c.$$
This can be viewed as a version of the positive mass theorem in this setting. See Conjecture 2.
\end{rema}

Comparing with the work of  \cite{dLG4}, which considers graphs over the local hyperbolic space $P_\kappa$, our setting enables us
to consider the negative mass range.   
In order to obtain a Penrose type inequality, we need to establish
a Minkowski type inequality in the Kottler space. This motivates us to study geometric inequalities in the Kottler space.
The corresponding Minkowski type inequality  is proved in the following Theorem.
\begin{theo}\label{mainthm2}
Let $\Sigma$ be a compact embedded hypersurface which is star-shaped with positive mean curvature in $P_{\kappa, m}$, then we have
\begin{eqnarray}\label{eq1_thm2}
\int_{\Sigma}V_{\kappa,m}Hd\mu&\geq& (n-1)\vartheta_{n-1}\left(\left(\frac{|\Sigma|}{\vartheta_{n-1}}\right)^{\frac{n}{n-1}}-\left(\frac{|\partial P_{\kappa,m}|}{\vartheta_{n-1}}\right)^{\frac{n}{n-1}}\right)\nonumber\\
&&+(n-1)\kappa\vartheta_{n-1}\left(\left(\frac{|\Sigma|}{\vartheta_{n-1}}\right)^{\frac{n-2}{n-1}}-\left(\frac{|\partial P_{\kappa,m}|}{\vartheta_{n-1}}\right)^{\frac{n-2}{n-1}}\right),
\end{eqnarray}
where $\partial P_{\kappa,m}=\{\rho_{\kappa,m}\}\times N.$ Equality holds if and only if $\Sigma$ is a slice.
\end{theo}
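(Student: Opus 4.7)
The plan is to prove Theorem \ref{mainthm2} via inverse mean curvature flow (IMCF), adapting the Brendle--Hung--Wang argument from anti-de Sitter--Schwarzschild to the general Kottler background. The key structural fact is that $(P_{\kappa,m},g_{\kappa,m})$ is a warped product $dr^2 + \phi(r)^2\hat g$ with $\phi'(r) = V_{\kappa,m}(\rho)$, and the static equation \eqref{static} yields, upon tracing, $\bar\Delta V_{\kappa,m} = nV_{\kappa,m}$, and upon restricting to a hypersurface $\Sigma\subset P_{\kappa,m}$ with unit outer normal $\nu$ and mean curvature $H$, the intrinsic identity $\Delta V_{\kappa,m} = -V_{\kappa,m}\,\overline{\mathrm{Ric}}(\nu,\nu) - H\langle\bar\nabla V_{\kappa,m},\nu\rangle$.

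First, by the classical existence theory of Gerhardt and Urbas, extended to Kottler-type backgrounds in the works cited in the introduction, the IMCF $\partial_t F = H^{-1}\nu$ starting from a star-shaped mean-convex $\Sigma$ admits a smooth solution $\{\Sigma_t\}_{t\geq 0}$ that remains star-shaped and mean convex, satisfies $|\Sigma_t| = e^t|\Sigma|$, and converges (after rescaling) to a coordinate slice $\{\rho\}\times N$ as $t\to\infty$.

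Next, I would introduce the candidate monotone quantity
\[
\mathcal{Q}(t) := \int_{\Sigma_t} V_{\kappa,m} H\,d\mu -(n-1)\vartheta_{n-1}\left[\left(\tfrac{|\Sigma_t|}{\vartheta_{n-1}}\right)^{\frac{n}{n-1}}+\kappa\left(\tfrac{|\Sigma_t|}{\vartheta_{n-1}}\right)^{\frac{n-2}{n-1}}\right],
\]
and compute its derivative using the standard IMCF formulas $\partial_t d\mu = d\mu$, $\partial_t V_{\kappa,m} = H^{-1}\langle\bar\nabla V_{\kappa,m},\nu\rangle$, $\partial_t H = -\Delta(1/H)-H^{-1}(|A|^2+\overline{\mathrm{Ric}}(\nu,\nu))$. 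Integrating by parts via $\int V_{\kappa,m}\Delta(1/H)d\mu = \int H^{-1}\Delta V_{\kappa,m}d\mu$ and substituting the static-equation identity above yields
\[
\frac{d}{dt}\int_{\Sigma_t} V_{\kappa,m}H\,d\mu = 2\int_{\Sigma_t}\langle\bar\nabla V_{\kappa,m},\nu\rangle d\mu + \int_{\Sigma_t}V_{\kappa,m}H\,d\mu - \int_{\Sigma_t}\frac{V_{\kappa,m}}{H}|A|^2\,d\mu.
\]
The Cauchy--Schwarz inequality $|A|^2 \geq H^2/(n-1)$ (equality iff $\Sigma_t$ is umbilic), together with a Minkowski-type identity that controls the residual $\int\langle\bar\nabla V_{\kappa,m},\nu\rangle d\mu$ against the area-power terms, is expected to give $\mathcal{Q}'(t)\leq 0$.

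Finally, a direct computation on any slice $\{\rho\}\times N$, using $H = (n-1)V_{\kappa,m}/\rho$ and $V_{\kappa,m}^2\rho^{n-2} = \rho^n+\kappa\rho^{n-2}-2m$, shows that $\mathcal{Q}\equiv -2m(n-1)\vartheta_{n-1}$ on every slice; the horizon $\partial P_{\kappa,m}$ gives the same value since $V_{\kappa,m}(\rho_{\kappa,m})=0$ and $\rho_{\kappa,m}^n+\kappa\rho_{\kappa,m}^{n-2}=2m$. Hence $\lim_{t\to\infty}\mathcal{Q}(t) = -2m(n-1)\vartheta_{n-1}$, and monotonicity gives $\mathcal{Q}(0)\geq -2m(n-1)\vartheta_{n-1}$, which is precisely \eqref{eq1_thm2}. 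Equality forces umbilicity, and in the warped product $P_{\kappa,m}$ this classifies $\Sigma$ as a coordinate slice. The principal obstacle I anticipate is completing the monotonicity step: the residual $\int\langle\bar\nabla V_{\kappa,m},\nu\rangle d\mu$ does not cancel as cleanly as in the pure hyperbolic case ($m=0$) and must be absorbed into the polynomial area terms via a delicate combination of the divergence theorem in the region between $\Sigma_t$ and $\partial P_{\kappa,m}$ with an additional warped-product Minkowski identity. A secondary concern is verifying that the IMCF behaves well over the full admissible range $m\in[m_c,\infty)$, particularly in the negative-mass regime with $\kappa=-1$ where the warping function $\phi$ may lose convexity near the horizon.
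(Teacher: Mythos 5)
Your computation of the value of $\mathcal{Q}$ on a slice is correct, and the reduction of \eqref{eq1_thm2} to $\mathcal{Q}(0)\geq -2m(n-1)\vartheta_{n-1}$ is the right target. The gap is precisely at the step you flag as ``the principal obstacle'': the quantity $\mathcal{Q}(t)$ (or its normalized version $Q_2(t)=\bigl[\int V_{\k,m}H+2(n-1)m\vartheta_{n-1}-(n-1)\vartheta_{n-1}(|\Sigma_t|/\vartheta_{n-1})^{n/(n-1)}\bigr]/(|\Sigma_t|/\vartheta_{n-1})^{(n-2)/(n-1)}$) is \emph{not} monotone along the flow unconditionally. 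After integrating by parts with the static equation, the derivative involves $2\int_{\Sigma_t}\langle\bar\nabla V_{\k,m},\nu\rangle\,d\mu$; the divergence theorem converts this into the volume integral $2n\int_{\Omega_t}V_{\k,m}\,dvol$ plus a constant, and there is no ``Minkowski-type identity'' that bounds this volume integral by the area power $\vartheta_{n-1}(|\Sigma_t|/\vartheta_{n-1})^{n/(n-1)}$. Indeed such a bound is a genuine isoperimetric-type statement \eqref{believe} which the authors state they \emph{cannot} prove directly (Remark \ref{rem5.6}), because the decay rates available from the $C^2$ estimates (Proposition \ref{prop4.10}, \ref{C2phi}) are too weak.

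The paper circumvents this by a two-stage argument you are missing. First it proves Theorem \ref{thm4.1}, a variant in which the area power $|\Sigma|^{n/(n-1)}$ is replaced by the weighted volume $n\int_\Omega V_{\k,m}\,dvol$; for that variant the quantity $Q_1(t)$ has a clean, unconditional monotonicity, powered by Brendle's Heintze--Karcher inequality \eqref{eq_3_thm4.1}. Second, it observes that the deficit $\mathbb{J}(\Sigma_t)-\mathbb{K}(\Sigma_t)=n\int_{\Omega_t}V_{\k,m}\,dvol-\vartheta_{n-1}\bigl[(|\Sigma_t|/\vartheta_{n-1})^{n/(n-1)}-(|\p M|/\vartheta_{n-1})^{n/(n-1)}\bigr]$, normalized by $(|\Sigma_t|/\vartheta_{n-1})^{n/(n-1)}$, is \emph{nondecreasing} along the flow \eqref{monoto.}. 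This forces a dichotomy: either $\mathbb{J}(\Sigma)>\mathbb{K}(\Sigma)$ at $t=0$, in which case Theorem \ref{thm4.1} alone already implies the result, or $\mathbb{J}(\Sigma_t)\le\mathbb{K}(\Sigma_t)$ up to some (possibly infinite) first crossing time $t_1$; and it is only on $[0,t_1]$ that $Q_2$ is nonincreasing, because the case hypothesis $\int_{\Sigma_t}p\,d\mu\le (n-2)m\vartheta_{n-1}+\vartheta_{n-1}(|\Sigma_t|/\vartheta_{n-1})^{n/(n-1)}$ is exactly what is needed to close the differential inequality. At $t_1$ (finite) one falls back on Theorem \ref{thm4.1}; if $t_1=\infty$ one must compute $\liminf_{t\to\infty}Q_2(t)$, and this is done via Beckner's Sobolev inequality on $N$ (Lemma \ref{Beckner} and \eqref{aim''}), \emph{not} via convergence of $\Sigma_t$ to a round slice, which the paper explicitly avoids establishing (the decay $|\nabla^2_{\hat g}\varphi|=O(t^2e^{-t/(n-1)})$ is insufficient for that). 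Your asymptotic step therefore also needs to be replaced by the Beckner/H\"older argument. On the other hand, your worry about the IMCF near the horizon in the negative-mass $\k=-1$ regime is unfounded: the constraint $m\ge m_c$ guarantees $\l_\k''(r)=\l_\k+(n-2)m\l_\k^{1-n}\ge 0$ for $\l_\k\ge\rho_{\k,m}$, which is what the maximum-principle arguments require.
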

Here by star-shaped we mean that $\Sigma$ can be represented as a graph over $N^{n-1}$.

 When $m=0$, i.e, $P_{\kappa, m}=P_{\kappa}$, which is a locally hyperbolic space, Theorem \ref{mainthm2} was proved in \cite{dLG4}. When $m\neq 0$,    $P_{\kappa, m}$
has no  constant curvature. A similar inequality was first proved by Brendel-Hung-Wang in their work on anti-de Sitter Schwarzschild space \cite{BHW}. Our proof of Theorem \ref{mainthm2} uses crucially their work.

One can check easily that for the Kottler space $P_{\kappa,m}$ the area of its horizon $\partial  P_{\kappa,m}$  satisfies
\beq\label{m}
m=\frac 12 \left(\left(\frac{|\partial  P_{\kappa,m} |}{\vartheta_{n-1}}\right)^{\frac{n}{n-1}}+\kappa\left(\frac{|\partial P_{\kappa,m} |}{\vartheta_{n-1}}\right)^{\frac{n-2}{n-1}}\right).
\eeq
Combining (\ref{eq2_thm1}), (\ref{eq1_thm2}) and \eqref{m},  we immediately obtain the Penrose inequality for ALH graphs.
\begin{theo} \label{thm1.5}
If $\cM\subset Q_{\kappa,m}$ is an ALH graph as in Theorem \ref{mainthm1}, so that its horizon $\Sigma\subset (P_{\kappa,m},g_{\kappa,m})$ is star-shaped with positive mean curvature, then \begin{equation} \label{PI}
m_{(\cM,g)}\geq \frac 12 \left(\left(\frac{|\Sigma|}{\vartheta_{n-1}}\right)^{\frac{n}{n-1}}+\kappa\left(\frac{|\Sigma|}{\vartheta_{n-1}}\right)^{\frac{n-2}{n-1}}\right),
\end{equation}
Equality is achieved by  the Kottler space.
\end{theo}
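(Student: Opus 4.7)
The plan is to assemble Theorem \ref{thm1.5} out of three ingredients that are already in place: the mass identity/inequality of Theorem \ref{mainthm1}, the weighted Minkowski inequality of Theorem \ref{mainthm2}, and the algebraic identity \eqref{m} relating the parameter $m$ to the area of the horizon $\partial P_{\kappa,m}$.

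First, since $\cM$ is an ALH graph satisfying the hypotheses of Theorem \ref{mainthm1} and the dominant energy condition $R_g+n(n-1)\geq 0$ is assumed, the lower bound \eqref{eq2_thm1} gives
\[
m_{(\cM,g)} \;\geq\; m \;+\; c_n\int_{\Sigma} V_{\kappa,m} H\, d\mu,
\]
where $H$ is the mean curvature of $\Sigma$ inside $(P_{\kappa,m},g_{\kappa,m})$. By hypothesis, this $\Sigma$ is star-shaped with respect to the warped product structure on $P_{\kappa,m}$ and has $H>0$, so Theorem \ref{mainthm2} applies directly to it. Substituting \eqref{eq1_thm2} and using the fact that the normalization constant satisfies $c_n(n-1)\vartheta_{n-1}=\tfrac12$ by definition of $c_n$, I would obtain
\[
m_{(\cM,g)} \;\geq\; m \;+\; \tfrac12\!\left[\left(\tfrac{|\Sigma|}{\vartheta_{n-1}}\right)^{\!\frac{n}{n-1}} \!-\left(\tfrac{|\partial P_{\kappa,m}|}{\vartheta_{n-1}}\right)^{\!\frac{n}{n-1}} \!+\kappa\!\left(\tfrac{|\Sigma|}{\vartheta_{n-1}}\right)^{\!\frac{n-2}{n-1}} \!-\kappa\!\left(\tfrac{|\partial P_{\kappa,m}|}{\vartheta_{n-1}}\right)^{\!\frac{n-2}{n-1}}\right].
\]

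Next I would invoke \eqref{m}, which says precisely that the two subtracted horizon terms combine to $2m$; that is,
\[
m \;=\; \tfrac12\!\left[\left(\tfrac{|\partial P_{\kappa,m}|}{\vartheta_{n-1}}\right)^{\!\frac{n}{n-1}} \!+\kappa\!\left(\tfrac{|\partial P_{\kappa,m}|}{\vartheta_{n-1}}\right)^{\!\frac{n-2}{n-1}}\right].
\]
Thus the contribution of $m$ on the right-hand side exactly cancels the negative horizon terms coming from the Minkowski inequality, leaving the desired inequality \eqref{PI}. For equality, Theorem \ref{mainthm2} forces $\Sigma$ to be a slice $\{\rho_0\}\times N$, and equality in \eqref{eq2_thm1} forces the graph function $f$ to have a trivial contribution so that the ALH graph coincides with a slice of $Q_{\kappa,m}$ lying over $P_{\kappa,m}$, i.e.\ the Kottler space itself.

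There is no genuine obstacle here beyond bookkeeping: the real work is in Theorems \ref{mainthm1} and \ref{mainthm2}. The only point that warrants a brief check is that the star-shapedness and positivity of $H$ assumed for $\Sigma$ as the inner boundary of the graph are precisely the hypotheses needed by Theorem \ref{mainthm2} applied inside $(P_{\kappa,m},g_{\kappa,m})$, and that the assumption $|\bar\nabla f|\to\infty$ at $\Sigma$ in Theorem \ref{mainthm1} is what guarantees $\Sigma$ behaves as a horizon and contributes only the boundary mean-curvature integral (with no extra boundary term) in the mass formula \eqref{eq1_thm1}.
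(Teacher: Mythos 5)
Your proposal is correct and is essentially identical to the paper's argument, which likewise obtains \eqref{PI} by chaining \eqref{eq2_thm1}, \eqref{eq1_thm2}, and the identity \eqref{m} (with the normalization $c_n(n-1)\vartheta_{n-1}=\tfrac12$ producing the cancellation of the horizon terms). One small caveat: your closing sentence sketches a rigidity argument, whereas the paper only verifies that the Kottler space \emph{achieves} equality (by checking that $P_{\kappa,m'}$ with $m'\ge m$ gives an ALH graph realizing equality) and explicitly defers the full rigidity statement to a future argument following Huang--Wu.
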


 The Kottler-Schwarzschild space $P_{\kappa, m'}$ can be represented as an ALH graph in $(Q_{\kappa},\tilde g_{\kappa})$ over $P_{\kappa, m}$, if $m' \ge m$.
 One can check that it achieves equality in the Penrose inequality \eqref{PI}.

The rigidity in Theorem \ref{thm1.5} should follow from the argument of Huang-Wu \cite{HuangWu}.
We will return to this problem later.

\section{Kottler-Schwarzschild space}
As stated in the introduction, the Kottler space, or Kottler-Schwarzschild space, is an analogue of the Schwarzschild space
in the setting of asymptotically locally hyperbolic manifolds.  Let $(N^{n-1},\hat g)$ be a closed space form of constant sectional curvature $\k$. Then the $n$-dimensional Kottler-Schwarzschild space $P_{\k,m}= [\rho_{\kappa,m}, \infty)\times N$ is equipped with the metric
\begin{equation}\label{example_g}
g_{\kappa,m}=\frac{d{\rho}^2}{V_{\kappa,m}^2(\rho)}+{\rho}^2\hat g, \quad {V_{\kappa,m}}=\sqrt{{\rho}^2+\kappa-\frac{2m}{{\rho}^{n-2}}}.
\end{equation}
One can realize metric (\ref{example_g}) as a graph over the locally hyperbolic spaces
\begin{equation}
b_{\kappa}=\frac{d{\rho}^2}{V_{\kappa}^2(\rho)}+{\rho}^2\hat g,\quad V_{\kappa}(\rho)=\sqrt{{\rho}^2+\kappa},
\end{equation}
provided $m\ge 0$. Explicitly, one needs to find a function $f=f(\rho)$ satisfying
$$(\rho^2+\k)\left(\frac{\partial f}{\partial\rho}\right)^2=\frac{1}{\rho^2+\k-\frac{2m}{\rho^{ n-2}}}-\frac{1}{\rho^2+\k}.$$
 Let $\rho_0:=\rho_{\k,m}$ be the largest positive root of
$$\phi(\rho):=\rho^2+\k-\frac{2m}{\rho^{n-2}}=0.$$ Then when $\rho$ approaches $\rho_{0}$, we have $\frac{\partial f}{\partial\rho}=O((\rho-\rho_{0})^{-\frac 12}),$
 so that one can solve that
 $$f(\rho)=\int_{\rho_0}^{\rho}\frac{1}{\sqrt{s^2+\k}}\;\sqrt{\frac{1}{s^2+\k-\frac{2m}{s^{n-2}}}-\frac{1}{s^2+\k}}\;\;ds.$$
Its horizon is $\{S_{\rho_{0}}:\rho_{0}^n+\k\rho_{0}^{n-2}=2m\}$ which implies (\ref{m}). Also one can check directly that the AH mass (\ref{AHmass}) of the Kottler space is exactly $m$. 
See Lemma \ref{lemm2.0} below. More generally, the metric (\ref{example_g}) can be realized as a graph over another Kottler-Schwarzschild space  $(P_{\k,m_1}, g_{\k,m_1})$, provided $m\ge m_1$.

Remark that in (\ref{example_g}), in order to have a positive root $\rho_0$, if $\kappa\ge 0$, the parameter $m$ should be always positive; if $\kappa=-1$, the parameter $m$ can be negative. In fact, in this case, $m$ belongs to the following interval
\beq\label{interval}
m\in[m_c,+\infty)\quad\mbox{and}\quad m_{c}=-\frac{(n-2)^{\frac{n-2}{2}}}{n^{\frac n2}}.
\eeq
Here the certain critical value $m_c$ comes from the following. If $m\leq 0$, one can solve the equation
$$\phi'(\rho)=2\rho+(n-2)\frac{2m}{\rho^{n-1}}=0,$$
to get the root $\rho_h=\left(-(n-2)m\right)^{\frac 1n}.$ Note the fact that $\phi(\rho_h)\leq 0$, which yields
$$m\geq-\frac{(n-2)^{\frac{n-2}{2}}}{n^{\frac n2}}.$$

By a change of variable $r=r(\rho)$ with $$r'(\rho)=\frac{1}{V_{\k,m}(\rho)},\quad r(\rho_{\k,m})=0,$$ we can rewrite $P_{\k,m}$ as $P_{\k,m}=[0,\infty)\times N$  equipped with the metric
\begin{eqnarray}
g_{\k,m}:=\bar g:=dr^2+\l_{\k}(r)^2\hat g,
\end{eqnarray}
where $\l_{\k}: [0,\infty)\to [\rho_{\k,m},\infty)$ is the inverse of $r(\rho)$, i.e., $\l_{\k}(r(\rho))=\rho$.

It is easy to check
\begin{eqnarray}\label{l''}
\l'_{\k}(r)&=&V_{\k,m}(\rho)=\sqrt{\k+\l_{\k}(r)^2-2m\l_{\k}(r)^{2-n}},\\
\l''_{\k}(r)&=&\l_{\k}(r)+(n-2)m\l_{\k}(r)^{1-n}.
\end{eqnarray}

By the definition of $\rho_{\k,m},$ we know that $$\l_{\k}''(r)\geq 0 \hbox{ for }r\in[0,\infty).$$

One can also verify
\begin{eqnarray}\label{lr}
\l_{\k}(r)=O(e^{r})\hbox{ as }r\to\infty.
\end{eqnarray}
We take $\k=-1$ as example to verify \eqref{lr}.
\begin{eqnarray*}
r(\rho)&=&\int^\rho_{\rho_{-1,m}} \frac{1}{\sqrt{-1+s^2-2ms^{2-n}}}ds\\%-\int_{s_\k}^\infty\frac{1}{\sqrt{\k+t^2-mt^{2-n}}}dt+\int_1^\infty \frac{1}{\sqrt{\k+t^2}}dt\\
&=& \int^\rho_1  \frac{1}{\sqrt{-1+s^2}}dt+\int^1_{\rho_{-1,m}}  \frac{1}{\sqrt{-1+s^2-2ms^{2-n}}}ds \\&&+\int^\rho_{1}\left(\frac{1}{\sqrt{-1+s^2-2ms^{2-n}}}-\frac{1}{\sqrt{-1+s^2}}\right)dt\\
&=&\ln(2\sqrt{\rho^2-1}+2\rho)-c-\frac{m}{n}\rho^{-n}+O(\rho^{-n-2}) \hbox{ as }\rho\to\infty.
\end{eqnarray*}
Here $c=\ln 2+\int^1_{\rho_{-1,m}}  \frac{1}{\sqrt{-1+s^2-2ms^{2-n}}}ds$.
By Taylor expansion, we have
\begin{eqnarray*}
\frac{e^{r(\rho)+c}}{4}+e^{-(r(\rho)+c)}= (1+o(1))\rho+o(1),
\end{eqnarray*}
which implies $\l_{\k}(r)=\rho=O(e^r)$ as $r\to\infty$.

Let $R_{\a\b\g\d}$ denote the Riemannian curvature tensor in $P_{\k,m}$. Let $\bar{\nabla}$ and $\bar{\Delta}$ denote the covariant derivative and the Laplacian on $P_{\k,m}$ respectively.
The Riemannian and Ricci curvature of $(P_{\k,m}, \bar g)$ are given by
\begin{eqnarray*}
R_{ijkl}&=&\l_\k(r)^2(\k-\l_\k'(r)^2)(\hat g_{ik}\hat g_{jl}-\hat g_{il}\hat g_{jk})=(2m\l_\k^{-n}-1)(\bar g_{ik}\bar g_{jl}-\bar g_{il}\bar g_{jk}),\\
R_{ijkr}&=&0,\\ R_{irjr}&=&-\l_\k(r)\l_\k''(r)\hat g_{ij}=-(1+(n-2)m\l_\k^{-n})\bar g_{ij}.\\
Ric(\bar g)&=&-\left(\frac{\l''_{\k}(r)}{\l_{\k}(r)}-(n-2)\frac{\k-\l'_{\k}(r)^2}{\l_{\k}(r)^2}\right)\bar g\nonumber-(n-2)\left(\frac{\l''_{\k}(r)}{\l_{\k}(r)}+\frac{\k-\l'_{\k}(r)^2}{\l_{\k}(r)^2}\right)dr^2\\
&=&\left(-(n-1)+(n-2)m\l_{\k}(r)^{-n}\right)\bar g -n(n-2)m\l_{\k}(r)^{-n}dr^2.
\end{eqnarray*}
It follows from \eqref{lr} that
\begin{eqnarray}\label{Riem}
|R_{\a\b\g\d}+\bar g_{\a\g}\bar g_{\b\d}-\bar g_{\a\d}\bar g_{\b\g}|_{\bar g}=O(e^{-nr}),\quad |\bar{\nabla}_\mu R_{\a\b\g\d}|_{\hat g}=O(e^{-nr});
\end{eqnarray}
\begin{eqnarray}\label{Ric}
|Ric(\bar g)+(n-1)\bar g|_{\bar g}=O(e^{-nr}).
\end{eqnarray}

\section{The ALH mass of  graphs in the kottler spaces}

First, one can check directly
\begin{lemm}\label{lemm2.0} The Kottler space  $(P_{\kappa, m}, g_{\kappa,m})$ is an  ALH manifold of decay order $n$ with the ALH mass
\[m_{(P_{\k,m},g_{\k,m})}=m.\]
\end{lemm}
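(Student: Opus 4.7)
The plan is to work in the natural chart $(\rho,\theta)\in[\rho_{\kappa,m},\infty)\times N$ (which serves as the diffeomorphism at infinity $\Phi$ in Definition~\ref{ALH}) and to substitute the Kottler metric directly into the mass formula \eqref{AHmass}. The key observation is that in these coordinates the reference metric $b_\kappa$ and the Kottler metric $g_{\kappa,m}$ share the same angular factor $\rho^2\hat g$ and differ only in the $d\rho^2$ coefficient, so the defect tensor
$$e:=g_{\kappa,m}-b_\kappa=\left(\frac{1}{V_{\kappa,m}^2}-\frac{1}{V_\kappa^2}\right)d\rho\otimes d\rho=\frac{2m/\rho^{n-2}}{V_\kappa^2\,V_{\kappa,m}^2}\,d\rho\otimes d\rho$$
is purely radial with $e_{\rho\rho}=O(\rho^{-n-2})$. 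Since $|d\rho\otimes d\rho|_{b_\kappa}=V_\kappa^2=O(\rho^2)$, we immediately obtain $|e|_{b_\kappa}+|\nabla^{b_\kappa}e|_{b_\kappa}=O(\rho^{-n})$, verifying \eqref{decaytau} with $\tau=n>n/2$. Condition \eqref{condition} is automatic because $R_{g_{\kappa,m}}\equiv-n(n-1)$ by \eqref{static}.

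For the mass itself, I would evaluate the three contributions to the integrand of \eqref{AHmass} against the unit outward conormal $\nu=V_\kappa\partial_\rho$ of $N_\rho$. Writing $F:=e_{\rho\rho}$, the diagonal form of $e$ produces a clean identity
$$(\tr^{b_\kappa}e)\,dV_\kappa(\nu)=V_\kappa^2 F\cdot V_\kappa' V_\kappa=V_\kappa^3 V_\kappa' F=e(\nabla^{b_\kappa}V_\kappa,\nu),$$
so the last two terms of the integrand cancel identically and only the divergence piece survives. A short computation with the Christoffel symbols of the warped product $b_\kappa$, in particular $\Gamma^\rho_{\rho\rho}=-V_\kappa'/V_\kappa$ and $\Gamma^\rho_{ab}=-V_\kappa^2\rho\,\hat g_{ab}$, shows that the angular components of $\div^{b_\kappa}e-d\,\tr^{b_\kappa}e$ vanish while the radial derivative $F'$ and the $\Gamma^\rho_{\rho\rho}$ contribution cancel against $(d\,\tr^{b_\kappa}e)_\rho$, leaving the single term
$$(\div^{b_\kappa}e-d\,\tr^{b_\kappa}e)_\rho=\frac{n-1}{\rho}\,V_\kappa^2 F,$$
with the factor $n-1$ coming from the trace of $\Gamma^\rho_{ab}$ over the $(n-1)$ angular directions.

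Combining these facts and substituting back the explicit expression for $F$ yields
$$V_\kappa(\div^{b_\kappa}e-d\,\tr^{b_\kappa}e)(\nu)=\frac{2m(n-1)}{\rho^{n-1}}\,\frac{V_\kappa^2}{V_{\kappa,m}^2},$$
and integrating against the area element $d\mu=\rho^{n-1}d\hat\mu$ on $N_\rho$ gives $2m(n-1)(V_\kappa^2/V_{\kappa,m}^2)\,\vartheta_{n-1}$, whose limit as $\rho\to\infty$ is $2m(n-1)\vartheta_{n-1}$. Multiplication by $c_n=[2(n-1)\vartheta_{n-1}]^{-1}$ then produces $m_{(P_{\kappa,m},g_{\kappa,m})}=m$, as claimed. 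The one step that warrants care — and which I view as the only nontrivial point — is the Christoffel calculation that collapses $(\div^{b_\kappa}e-d\,\tr^{b_\kappa}e)_\rho$ to the single term $(n-1)V_\kappa^2 F/\rho$; as a sanity check one can instead compute $\div^{b_\kappa}(e-(\tr^{b_\kappa}e)b_\kappa)$, noting that $e-(\tr^{b_\kappa}e)b_\kappa=-V_\kappa^2\rho^2 F\,\hat g$ has only angular components, which reduces the matter to a single angular divergence.
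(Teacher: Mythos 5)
Your computation is correct and is exactly the direct verification the paper alludes to with ``one can check directly'': the cancellation $(\tr^{b_\kappa}e)\,dV_\kappa(\nu)=e(\nabla^{b_\kappa}V_\kappa,\nu)$, the collapse of $(\div^{b_\kappa}e-d\,\tr^{b_\kappa}e)_\rho$ to $(n-1)V_\kappa^2 F/\rho$ via the warped-product Christoffel symbols, and the final limit all check out. Since the paper gives no proof of this lemma, there is nothing to compare against; your argument is the natural one and fills that gap cleanly.
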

Second, instead of computing the ALH mass with $V_\kappa$ in \eqref{condition} one can  compute it with $V_{\kappa, m}$ by using the following Lemma
\begin{lemm}\label{lemm2.1} We have
\begin{equation}
\label{eq_Vkm}
m_{(\cM,g)}
=m\!+\!c_n\lim_{\rho\rightarrow\infty}\int_{N_{\rho}} \bigg(V_{\kappa,m}(\div^{g_{\kappa,m}} \tilde e\!-\!d\tr^{g_{\kappa,m}} \tilde e)\!+\!(\tr^{g_{\kappa,m}} \tilde e)d V_{\kappa,m}\!-\!\tilde e(\nabla^{g_{\kappa,m}} V_{\kappa,m},\cdot)\bigg)\bar\nu d\mu,
\end{equation}
where $\tilde e:=(\Phi^{-1})^{\ast}g-g_{\kappa,m}$ and $\bar\nu$ denotes the outer normal of $N_{\rho}$ induced by $g_{\k,m}$.
\end{lemm}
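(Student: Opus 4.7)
The strategy exploits the linearity of the ALH mass integrand in the metric perturbation together with Lemma~\ref{lemm2.0}.  Introduce, for any symmetric $2$-tensor $X$, reference metric $b$ with static potential $V$, and tangent vector $Y$, the one-form
\[
\mathbb{U}(X;b,V)(Y)\ :=\ V\bigl(\div^{b} X - d\,\tr^{b} X\bigr)(Y)+(\tr^{b} X)\,\langle dV,Y\rangle_{b}-X(\nabla^{b} V,Y),
\]
which is linear in $X$.  Set $\hat e:=g_{\kappa,m}-b_\kappa$, so that $e=(\Phi^{-1})^{\ast}g-b_\kappa=\tilde e+\hat e$, and split the defining integrand of \eqref{AHmass} as
\[
\int_{N_\rho}\mathbb{U}(e;b_\kappa,V_\kappa)(\nu)\,d\mu\ =\ \int_{N_\rho}\mathbb{U}(\tilde e;b_\kappa,V_\kappa)(\nu)\,d\mu\ +\ \int_{N_\rho}\mathbb{U}(\hat e;b_\kappa,V_\kappa)(\nu)\,d\mu.
\]

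By Lemma~\ref{lemm2.0}, $c_n$ times the $\rho\to\infty$ limit of the second integral equals $m$, since that integral is, by the very definition \eqref{AHmass}, the object whose limit computes the ALH mass of $(P_{\kappa,m},g_{\kappa,m})$ with respect to the background $b_\kappa$.  It therefore remains to establish
\[
\lim_{\rho\to\infty}\int_{N_\rho}\mathbb{U}(\tilde e;b_\kappa,V_\kappa)(\nu)\,d\mu\ =\ \lim_{\rho\to\infty}\int_{N_\rho}\mathbb{U}(\tilde e;g_{\kappa,m},V_{\kappa,m})(\bar\nu)\,d\mu.
\]
Expanding the difference of the two integrands term by term yields a finite sum of contributions, each of the schematic form (background discrepancy)$\times$($\tilde e$ or $\nabla^{b_\kappa}\tilde e$).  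The discrepancies in question, namely $V_{\kappa,m}-V_\kappa$, $d(V_{\kappa,m}-V_\kappa)$, $\hat e$, $\nabla^{b_\kappa}\hat e$, $\nu-\bar\nu$, and the Christoffel symbol difference $\Gamma^{g_{\kappa,m}}-\Gamma^{b_\kappa}$, are all controlled by appropriate powers of $\rho^{-n}$ coming from the explicit form of the Kottler metric.  Combined with the decay $|\tilde e|_{b_\kappa}+|\nabla^{b_\kappa}\tilde e|_{b_\kappa}=O(\rho^{-\tau})$ granted by \eqref{decaytau} with $\tau>n/2$, the growth $V_\kappa\sim\rho$, and the area growth $|N_\rho|\sim\rho^{n-1}$, each difference term contributes a quantity bounded by $C\rho^{-\tau}$ after integration, which vanishes in the limit.

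The main obstacle is the estimate in the last step.  The ALH mass is only conditionally convergent, so individual pieces of $\mathbb{U}(\tilde e;b_\kappa,V_\kappa)(\nu)$ may themselves grow polynomially in $\rho$ and cannot be bounded separately.  The point is that in the \emph{difference} of the two integrands the potentially dangerous factors ($V_\kappa$, $dV_\kappa$ and the Christoffel contributions in $\div^{b_\kappa}$ and $\tr^{b_\kappa}$) are invariably paired with an $O(\rho^{-n})$ Kottler correction, which kills the growth and produces a genuinely decaying remainder.  Once this bookkeeping of decay orders is carried out carefully, the three observations above combine to give \eqref{eq_Vkm}.
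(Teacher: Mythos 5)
Your proof follows essentially the same route as the paper's: split $e=\tilde e+(g_{\kappa,m}-b_\kappa)$ using linearity of the mass integrand, identify the second contribution as $m$ via Lemma~\ref{lemm2.0}, and then swap the background data $(b_\kappa,V_\kappa,\nu)\mapsto(g_{\kappa,m},V_{\kappa,m},\bar\nu)$ in the $\tilde e$-integral by a decay argument. The only difference is one of presentation: the paper dispatches the last step with the single remark that $g_{\kappa,m}$ is itself ALH, whereas you spell out the bookkeeping of decay orders (the $O(\rho^{-n})$ Kottler corrections pairing with the $O(\rho^{-\tau})$ decay of $\tilde e$ to beat the $\rho\cdot\rho^{n-1}$ growth of $V_\kappa$ and the area), which is a worthwhile elaboration given that the individual terms are only conditionally convergent.
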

\begin{proof}
First note that $$e=(\Phi^{-1})^{\ast}g-b_{\kappa}=\tilde e+(g_{\kappa,m}-b_{\kappa}),$$
thus we have
\begin{eqnarray*}
m_{(\cM,g)}
&=&m_{(P_{\k,m},g_{\k,m})}+c_n\lim_{\rho\rightarrow\infty}\int_{N_{\rho}} \bigg(V_{\kappa}({\div}^{b_{\kappa}} \tilde e-d\tr^{b_{\kappa}} \tilde e)+(\tr^{b_{\kappa}} \tilde e)d V_{\kappa}-\tilde e(\nabla^{b_{\kappa}} V_{\kappa},\cdot)\bigg)\nu d\mu\\
&=&m+c_n\lim_{\rho\rightarrow\infty}\int_{N_{\rho}} \bigg(V_{\kappa}({\div}^{b_{\kappa}} \tilde e-d\tr^{b_{\kappa}} \tilde e)+(\tr^{b_{\kappa}} \tilde e)d V_{\kappa}-\tilde e(\nabla^{b_{\kappa}} V_{\kappa},\cdot)\bigg)\nu d\mu.
\end{eqnarray*}
Then using the fact that $g_{\k,m}$ is ALH of decay order $\tau$, one can  replace $V_{\kappa}$ by $V_{\kappa,m}$, $b_{\kappa}$ by $g_{\kappa,m}$ and $\nu$ by $\bar\nu$ in (\ref{AHmass}) without changing mass, that is,
\begin{eqnarray*}
&&\lim_{\rho\rightarrow\infty}\int_{N_{\rho}} \bigg(V_{\kappa}({\div}^{b_{\kappa}} \tilde e-d\tr^{b_{\kappa}} \tilde e)+(\tr^{b_{\kappa}} \tilde e)d V_{\kappa}-\tilde e(\nabla^{b_{\kappa}} V_{\kappa},\cdot)\bigg)\nu d\mu\\
&=&\lim_{\rho\rightarrow\infty}\int_{N_{\rho}} \bigg(V_{\kappa,m}({\div}^{g_{\kappa,m}} \tilde e-d\tr^{g_{\kappa,m}} \tilde e)+(\tr^{g_{\kappa,m}} \tilde e)d V_{\kappa,m}-\tilde e(\nabla^{g_{\kappa,m}} V_{\kappa,m},\cdot)\bigg)\bar\nu d\mu.
\end{eqnarray*}
This implies the desired result.
\end{proof}

According to \cite{M}, the second term in (\ref{eq_Vkm}) is also an integral invariant when the reference metric is taken as the Kottler-Schwarzschild metric $g_{\kappa,m}$ rather than $b_{\kappa}.$
In the spirit of \cite{dLG1,dLG2}, one can estimate the second term since $(P_{\kappa,m}, g_{\kappa,m},V_{\kappa,m})$ satisfies the static equation (\ref{static}). Therefore we can prove Theorem \ref{mainthm1} for the graphs over a Kottler-Schwarzschild space which extends the previous works of graphs over the Euclidean space, hyperbolic space as well as the locally hyperbolic spaces.

\vspace{3mm}

\noindent{\it Proof of Theorem \ref{mainthm1}.}
The proof of this theorem follows in the spirit of the one in \cite{dLG1,dLG2}. For the convenience of readers, we sketch it.
Denote $(\cM,g)\subset (Q_{\kappa,m},\tilde g_{\kappa,m})$ with the unit outer normal $\xi$ and the shape operator $B=-\nabla^{\tilde g_{\kappa,m}}\xi$. Define the Newton tensor inductively by  $$T_r=S_rI-BT_{r-1},\quad T_0=I,$$ where $S_r$ denotes  the $r$-th mean curvature of $(\cM,g)$  with respect to $\xi$.
Let $\{\epsilon_i\}_{i=1}^n$ be a local orthonormal frame on $\cM$, then a direct computation gives (or see (3.3) in \cite{ALM} for the proof)
\beq\label{divTr}
\div_{g}T_r:=\sum_{i=1}^n(\nabla_{e_i}T_r)(e_i)=-B(\div_{g}T_{r-1})-\sum_{i=1}^n(\tilde R(\xi,T_{r-1}(\epsilon_i))\epsilon_i)^T,
\eeq
where $\tilde R$ denotes the curvature tensor of $(Q_{\kappa,m},\tilde g_{\kappa,m})$ and $(\tilde R(\xi,T_{r-1}(\epsilon_i))\epsilon_i)^T$ denotes the tangential component of $\tilde R(\xi,T_{r-1}(\epsilon_i))\epsilon_i$.

Using the fact that $\frac{\partial}{\partial t}$ is a killing vector field, one can check directly (or refer to (8.4) in \cite{ALM} for the proof)
\beq\label{divconformal}
\div_{g}\left(T_r(\frac{\partial}{\partial t})^{T}\right)=\langle\div_{g}T_r,(\frac{\partial }{\partial t})^T\rangle+(r+1)S_{r+1}\langle\frac{\partial}{\partial t},\xi\rangle,
\eeq
where  $(\frac{\partial}{\partial t})^T$ is the tangential component of $\frac{\partial}{\partial t}$ along $\cM$.

Combining (\ref{divTr}) and (\ref{divconformal}) together, we get the following flux-type formula (for $r=1$)
\beq\label{flux}
div_g\left(T_1(\frac{\partial}{\partial t})^{T}\right)=2S_2\langle\frac{\partial}{\partial t}, \xi\rangle+Ric_{\tilde g_{\kappa,m}}(\xi,(\frac{\partial}{\partial t})^T).
\eeq
Denote by $$e_0=(V_{\k,m})^{-1}\frac{\partial}{\partial t}.$$ In the local coordinates $x=(x_1,\cdots,x_n)$ of $(P_{\k,m},g_{\k,m})$, the tangent space $T\cM^n$ is spanned by $$Z_i=(V_{\k,m}\bar\nabla_i f)e_0+\frac{\partial}{\partial x_i},$$
and thus
$$\xi=\frac{1}{\sqrt{1+V_{\k,m}^2|\bar\nabla f|^2}}(e_0-V_{\k,m}\bar\nabla f),$$
which implies
\begin{eqnarray*}
&&(\frac{\partial}{\partial t})^T=V_{\k,m}e_0-\frac{V_{\k,m}}{\sqrt{1+V_{\k,m}^2|\bar\nabla f|^2}}\xi\\
&=&\frac{V_{\k,m}^3|\bar\nabla f|^2}{1+V_{\k,m}^2|\bar\nabla f|^2}e_0+\frac{V_{\k,m}^2}{1+V_{\k,m}^2|\bar\nabla f|^2}\bar\nabla f.
\end{eqnarray*}
On the other hand $(\frac{\partial}{\partial t})^T:=((\frac{\partial}{\partial t})^T)^i Z_i$
which yields
\beq\label{Ti}
((\frac{\partial}{\partial t})^T)^i=\frac{V_{\k,m}^2\bar\nabla^i f}{1+V_{\k,m}^2|\bar\nabla f|^2}.
\eeq
Note that the shape operator  of $\cM^n$ is given by (cf. (4.5) in \cite{GWW4} for instance)
\begin{eqnarray}\label{shape}
 B^i_j&=&\frac{V_{\k,m}}{\sqrt{1+V_{\k,m}^2|\bar\nabla f|^2}}\bigg(\bar\nabla^i\bar\nabla_j f+\frac{\bar\nabla^i f\bar\nabla_j V_{\k,m}}{V_{\k,m}(1+V_{\k,m}^2|\bar\nabla f|^2)}+\frac{\bar\nabla^i V_{\k,m}\bar\nabla_j f}{V_{\k,m}}\\
 &&\qquad\qquad\qquad\qquad\quad-\frac{V^2\bar\nabla^i f\bar\nabla^s f\bar\nabla_s\bar\nabla_j f}{1+V_{\k,m}^2|\bar\nabla f|^2} \bigg).\nonumber
\end{eqnarray}
By the decay property of metric (\ref{graphmetric}) together with (\ref{Ti}), one can check that
\begin{eqnarray}\label{approx}
g_{ij} (T_1(\frac{\partial}{\partial t})^{T})^i\bar\nu^j&\approx& (g_{\k,m})_{ij} (T_1(\frac{\partial}{\partial t})^{T})^i\bar\nu^j\nonumber\\
&=&(T_1)_p^i\frac{V_{\k,m}^2\bar\nabla^p f}{1+V_{\k,m}^2|\bar\nabla f|^2}\bar\nu_i
\approx(T_1)_p^i\frac{V_{\k,m}^2\bar\nabla^p f}{\sqrt{1+V_{\k,m}^2|\bar\nabla f|^2}}\bar\nu_i,
\end{eqnarray}
where $\approx$ means that the two terms differ only by the terms that vanish at infinity after integration.

With  expression (\ref{shape}) and applying the similar argument in the proof of (4.11) in \cite{GWW4}, one can check that
$$V_{\kappa,m}(\bar\nabla^j \tilde e_{ij}-\bar\nabla^i \tilde e_{jj} )-(\tilde e_{ij}\bar\nabla^j V_{\kappa,m}-\tilde e_{jj}\bar\nabla^j V_{\kappa,m})=(T_1)_p^i\frac{V_{\k,m}^2\bar\nabla^p f}{\sqrt{1+V_{\k,m}^2|\bar\nabla f|^2}}.$$
As in the proof of Theorem 1.4 in \cite{GWW4}, integrating by parts gives an extra boundary term that
\begin{eqnarray*}
&&\lim_{\rho\rightarrow\infty}\int_{N_{\rho}} \bigg(V_{\kappa,m}(div^{g_{\kappa,m}} \tilde e\!-\!d\tr^{g_{\kappa,m}} \tilde e)\!+\!(\tr^{g_{\kappa,m}} \tilde e)d V_{\kappa,m}\!-\!\tilde e(\nabla^{g_{\kappa,m}} V_{\kappa,m},\cdot)\bigg)\bar\nu d\mu\\
&=&\lim_{\rho\rightarrow\infty}\int_{N_{\rho}} (T_1)_p^i\frac{V_{\k,m}^2\bar\nabla^p f}{\sqrt{1+V_{\k,m}^2|\bar\nabla f|^2}}\bar\nu_i d\mu+ \int_{\Sigma}V_{\kappa,m}H\bigg(\frac{V_{\k,m}^2|\bar\nabla f|^2}{1+V_{\k,m}^2|\bar\nabla f|^2}\bigg)d\mu.
\end{eqnarray*}
Next using (\ref{approx}) and the assumption that $|\bar\nabla f(x)|\rightarrow\infty$ as
$x\rightarrow\Sigma$, we have
\begin{eqnarray*}
&&\lim_{\rho\rightarrow\infty}\int_{N_{\rho}} (T_1)_p^i\frac{V_{\k,m}^2\bar\nabla^p f}{\sqrt{1+V_{\k,m}^2|\bar\nabla f|^2}}\bar\nu_i d\mu+ \int_{\Sigma}V_{\kappa,m}H\bigg(\frac{V_{\k,m}^2|\bar\nabla f|^2}{1+V_{\k,m}^2|\bar\nabla f|^2}\bigg)d\mu\\
&=&\lim_{\rho\rightarrow\infty}\int_{N_{\rho}} g_{ij} (T_1(\frac{\partial}{\partial t})^{T})^i \bar\nu^j d\mu+ \int_{\Sigma}V_{\kappa,m}Hd\mu.
\end{eqnarray*}
Finally integrating (\ref{flux}) and revoking Lemma \ref{lemm2.1}, we finally obtain
\begin{equation}\label{eq1}
m_{(\cM,g)}=m+c_n\int_{\cM}\left(2S_2\langle\frac{\partial}{\partial t}, \xi\rangle+Ric_{\tilde g_{\kappa,m}}(\xi,(\frac{\partial}{\partial t})^T)\right)dV_g+c_n\int_{\Sigma}V_{\kappa,m}Hd\mu.
\end{equation}
>From the Gauss equation we obtain
$$R_g=R_{\tilde g_{\kappa,m}}-2Ric_{\tilde g_{\kappa,m}}(\xi,\xi)+2S_2.$$
Since $\tilde g_{\kappa,m}$ is an Einstein metric, we have
$$R_g=-n(n-1)+2S_2\quad \mbox{and}\quad Ric_{\tilde g_{\kappa,m}}(\xi,(\frac{\partial}{\partial t})^T)=0.$$
Combining all the things together, we complete the proof of the theorem.
\qed

\section{Inverse mean curvature flow}

Let $\S_0$ be a star-shaped, strictly mean convex closed hypersurface in $P_{\k,m}$ parametrized by $X_0: N\to P_{\k,m}$. Since the case $\k=1$ has been considered in \cite{BHW}, we focus on the case $\k=0$ or $-1$, Consider a family of hypersurfaces $X(\cdot,t): N\to P_{\k,m}$  evolving by the inverse mean curvature flow:
\begin{eqnarray}\label{flow}
\frac{\p X}{\p t}(x,t)=\frac{1}{H(x,t)}\nu(x,t),\quad X(x,0)=X_0(x),
\end{eqnarray}
where $\nu(\cdot,t)$ is the outward normal of $\S_t=X(N,t)$.

Let us first fix the notations. Let $g_{ij}$, $h_{ij}$ and $d\mu$ denote the induced metric, the second fundamental form and the volume element of $\S_t$ respectively. Let $\nabla$ and $\Delta$ denote the covariant derivative and the Laplacian on $\S_t$ respectively. We always use the Einstein summation convention. Let $|A|^2=g^{ij}g^{kl}h_{ik}h_{jl}.$

We collect some evolution equations in the following lemma. For the proof see for instance \cite{Gerhardt}.
\begin{lemm}\label{evolv.lemm}
Along flow (\ref{flow}), we have the following evoltion equations.
\begin{enumerate}[(1)]
\item The volume element of $\S_t$ evolves under
$$\frac{\partial}{\partial t}d\mu=d\mu.$$ Consequently, $$\frac{\partial}{\partial t}|\S_t|=|\S_t|.$$
\item $h_i^j$ evolves under
\begin{eqnarray*}
\frac{\p h_i^j}{\p t} &=&\frac{\Delta h_i^j}{H^2}+\frac{|A|^2}{H^2}h_i^j-\frac{2h_i^kh_k^j}{H}-\frac{2\nabla_i H\nabla^j H}{H^3}\\&&+\frac{1}{H^2}g^{kl}\left(2g^{pj}R_{qikp}h_l^q-g^{pj}R_{qkpl}h_i^q-R_{qkil}h^{qj}+R_{\nu k\nu l}h_i^j\right)\\&&+\frac{1}{H^2}g^{kl}g^{qj}\left(\overline{\nabla}_qR_{\nu kli}+\overline{\nabla}_lR_{\nu ikq}\right)-\frac{2}{H}g^{kj}R_{\nu i \nu k}.
\end{eqnarray*}

\item The mean curvature evolves under
$$\frac{\p H}{\p t} =\frac{\Delta H}{H^2}-2\frac{|\nabla H|^2}{H^3}-\frac{|A|^2}{H}-\frac{Ric(\nu,\nu)}{H}.$$

\item The function $V_{\k,m}$ evolves under
$$\frac{\p}{\p t}V_{\k,m}=\frac{p}{H},$$
where $p:=\langle \overline\nabla V_{\k,m},\nu\rangle$ is the support function of $\Sigma.$

\item The function $\chi=\frac{1}{\<\l_\k\p_r,\nu\>}$ evolves under
\begin{eqnarray}
\frac{\p\chi}{\p t}= \frac{\Delta\chi}{H^2}-\frac{2|\nabla \chi|^2}{\chi H^2}-\frac{|A|^2}{H^2}\chi+\frac{-\chi Ric(\nu,\nu)+\chi^2\l_\k Ric(\nu,\p_r)}{H^2}.
\end{eqnarray}

\item The function $p$, defined above, evolves under
$$\frac{\p p}{\p t}=\frac{\overline{\nabla}^2 V_{\k,m}(\nu,\nu)}{H}+\frac{1}{H^2}\langle\nabla V_{\k,m},\nabla H\rangle,$$
and thus
$$\frac{d}{dt}\int_{\Sigma_t}p d\mu=n\int_{\Sigma_t}\frac{V_{\k,m}}{H}d\mu.$$
\end{enumerate}
\end{lemm}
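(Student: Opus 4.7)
All six evolution equations are standard consequences of the universal variations of geometric quantities under a normal flow $\p_t X = F\nu$, specialized here to $F = 1/H$. The plan is to derive them uniformly from the basic variations, the Simons identity, and the curvature data of $(P_{\k,m},\bar g)$ recorded in Section 2.

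First I would record the three basic variations: $\p_t g_{ij} = 2Fh_{ij}$; $\p_t\nu = -\nabla F$ (a tangential vector, obtained from $\langle\nu,\nu\rangle\equiv 1$ and $\langle\nu,\p_i X\rangle\equiv 0$); and the standard expression for $\p_t h_i^{\,j}$ as the sum of a covariant Hessian of $F$, a quadratic correction in $h$, and an ambient curvature term. Taking determinants in the first gives $\p_t d\mu = FH\,d\mu$, which with $F=1/H$ yields item (1). Substituting $F = 1/H$ into the Weingarten-map formula and then applying the Simons identity to trade $\nabla_i\nabla^j H$ for $\Delta h_i^{\,j}$ plus the ambient curvature contributions of $(P_{\k,m},\bar g)$ produces item (2); tracing item (2) and using $\Delta(1/H) = -\Delta H/H^2 + 2|\nabla H|^2/H^3$ yields item (3).

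Items (4) and (6) are short direct computations. For (4), the chain rule gives $\p_t V_{\k,m}(X) = \langle\bar\nabla V_{\k,m},\p_t X\rangle = p/H$. For (6), differentiating $p = \langle\bar\nabla V_{\k,m},\nu\rangle$ along the flow and using $\p_t\nu = -\nabla(1/H) = \nabla H/H^2$ gives the stated expression for $\p_t p$. For item (5), writing $u := 1/\chi = \langle\l_\k\p_r,\nu\rangle$, the warped-product identity $\bar\nabla_X(\l_\k\p_r) = \l_\k' X$ for any tangent $X$ produces a clean expression for $\bar\nabla^2 u$ in terms of $h$, $H$ and the ambient curvature; combining this with $\p_t\nu = \nabla H/H^2$ and then inverting $u\mapsto 1/u$ yields the $\Delta\chi/H^2$, $|\nabla\chi|^2/(\chi H^2)$, $|A|^2\chi/H^2$ and curvature contributions in the stated formula.

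The integral identity at the end of item (6) is the step I expect to depend essentially on the static equation \eqref{static}. By item (1), $\frac{d}{dt}\int_{\S_t}p\,d\mu = \int_{\S_t}(\p_t p + p)\,d\mu$. Tracing \eqref{static} yields $\bar\Delta V_{\k,m} = nV_{\k,m}$, and decomposing the ambient Laplacian on $\S_t$ as $\bar\Delta V_{\k,m} = \Delta V_{\k,m} + \bar\nabla^2 V_{\k,m}(\nu,\nu) + Hp$ gives
\be
\frac{\bar\nabla^2 V_{\k,m}(\nu,\nu)}{H} = \frac{nV_{\k,m}}{H} - \frac{\Delta V_{\k,m}}{H} - p.
\ee
Substituting this into $\p_t p + p$ and integrating by parts on $\S_t$ makes the two terms $-\int \Delta V_{\k,m}/H\,d\mu$ and $\int\langle\nabla V_{\k,m},\nabla H\rangle/H^2\,d\mu$ cancel, leaving $n\int V_{\k,m}/H\,d\mu$ as required. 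The main bookkeeping obstacle is tracking signs and curvature conventions in the tensorial identity (2); once that is pinned down, items (1), (3), (5) and the integral identity are routine consequences.
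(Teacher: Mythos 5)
Your outline is correct and is the standard derivation; the paper itself offers no proof for this lemma, only a citation to Gerhardt, and your approach (basic normal-variation formulas, Simons identity for item (2), chain rule for (4), the conformal Killing property $\bar\nabla_X(\l_\k\p_r)=\l_\k' X$ for (5), and $\p_t\nu=\nabla H/H^2$ for (6)) is exactly what that reference would give. Your handling of the final integral identity in item (6) — decomposing $\bar\Delta V_{\k,m}=\Delta V_{\k,m}+\bar\nabla^2 V_{\k,m}(\nu,\nu)+Hp$, tracing the static equation to get $\bar\Delta V_{\k,m}=nV_{\k,m}$, and integrating by parts so that $-\int\Delta V_{\k,m}/H$ cancels $\int\langle\nabla V_{\k,m},\nabla H\rangle/H^2$ — is precisely the mechanism the paper itself redeploys a few lines later in the proof of Lemma \ref{lemm4.2}, so you have correctly identified the only step specific to this setting.
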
\qed

\

Since $\S_0$ is star-shaped, we can write $\S_0$ as a graph of a function  over $N$: $$\S_0=\{(u_0(x),x): x\in N\}.$$
It is well known that there exists a maximal time interval $[0,T^*)$, $0<T^*\leq \infty$, such that the flow exists and any $X(\cdot,t), t\in [0,T^*)$ are also graphs of functions $u$ over $N$: $$\S_t=\{(u(x,t),x): x\in N\}.$$

Define a function $\varphi(\cdot,t):N\to \RR$ by $$\varphi(x,t)=\int_0^{u(x,t)} \frac{1}{\l_\k(r)}dr.$$ Let $$v=\sqrt{1+|\nabla_{\hat g} \varphi|^2_{\hat g}}.$$
In term of the local coordinates $x^i$ on $N$, the induced metric and the second fundamental form of $\S_t$ are given respectively by
\beq\label{g,h}
g_{ij}=\l_\k^2(\hat g_{ij}+\varphi_i\varphi_j), \quad h_{ij}=\frac{\l_\k}{v}(\l'_{\k}(\hat g_{ij}+\varphi_i\varphi_j)-\varphi_{ij}).
\eeq
Here $\varphi_i=\nabla^{\hat g}_i \varphi$ and $\varphi_{ij}=\nabla^{\hat g}_i\nabla^{\hat g}_j \varphi$.
Thus the mean curvature is given by
\beq\label{meancurv.}
H=g^{ij}h_{ij}=(n-1)\frac{\l'_{\k}}{\l_{\k} v}-\frac{\tilde{g}^{ij}\varphi_{ij}}{\l_{\k} v},
\eeq
where $\tilde{g}^{ij}=\hat g^{ij}-\frac{\varphi^i\varphi^j}{v^2}.$

Along  flow \eqref{flow}, the graph functions $u$ evolve under
\begin{eqnarray}\label{u}
\frac{\p u}{\p t}=\frac{v}{H}.
\end{eqnarray}
Hence
\begin{eqnarray}\label{phi}
\frac{\p \varphi}{\p t}=\frac{v}{\l H}=\frac{v^2}{(n-1)\l'_{\k}-\tilde{g}^{ij}\varphi_{ij}}:=\frac{1}{F(u, \nabla_{\hat g} \varphi, {\nabla_{\hat g}}^2\varphi)}.
\end{eqnarray}

By the parabolic maximum principle, we can derive the $C^0$ and $C^1$ estimates.

\begin{prop}\label{C0}
Let $\underline{u}(t)=\inf_N u(\cdot, t)$ and $\bar{u}(t)=\sup_N u(\cdot, t)$. Then
\begin{eqnarray}
\l_{\k}(\underline{u}(t))\geq e^{\frac{1}{n-1}t}\l_{\k}(\underline{u}(0)),\quad \l_{\k}(\bar{u}(t))\leq e^{\frac{1}{n-1}t}\l_{\k}(\bar{u}(0)).
\end{eqnarray}
\end{prop}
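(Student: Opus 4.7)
The plan is to reduce \eqref{flow} to the scalar parabolic equation \eqref{u} for the radial graph height $u(x,t)$, and then apply the parabolic maximum principle directly to the extrema $\bar u(t)$ and $\underline u(t)$. The key observation is that at a spatial maximum of $u(\cdot,t_0)$ the mean curvature is bounded below in terms of $\l'_\k/\l_\k$ alone, and the resulting bound for $\p u/\p t$ integrates exactly to the claimed exponential barrier.

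Concretely, first I fix $t_0\in(0,T^*)$ and pick $x_0\in N$ with $u(x_0,t_0)=\bar u(t_0)$. Since $\l_\k>0$ on $(0,\infty)$, the auxiliary function $\varphi=\int_0^u\l_\k(r)^{-1}dr$ is strictly increasing in $u$, so $\varphi(\cdot,t_0)$ also attains its maximum at $x_0$, giving
\[
\nabla_{\hat g}\varphi(x_0,t_0)=0,\qquad \nabla_{\hat g}^2\varphi(x_0,t_0)\le 0.
\]
Hence $v(x_0,t_0)=1$ and $\tilde g^{ij}\varphi_{ij}(x_0,t_0)\le 0$, so formula \eqref{meancurv.} yields
\[
H(x_0,t_0)\;\ge\;(n-1)\,\frac{\l'_\k(\bar u(t_0))}{\l_\k(\bar u(t_0))}.
\]
Inserting this into \eqref{u} gives the pointwise bound
\[
\left.\frac{\p u}{\p t}\right|_{(x_0,t_0)}\;=\;\frac{v}{H}\bigg|_{(x_0,t_0)}\;\le\;\frac{\l_\k(\bar u(t_0))}{(n-1)\,\l'_\k(\bar u(t_0))}.
\]

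Second, I convert this into an ODE inequality for the Lipschitz function $\bar u(t)$ by the standard Hamilton trick: comparing $u(x,t)$ with the ODE solution of $\phi'=\l_\k(\phi)/((n-1)\l'_\k(\phi))$ starting from $\bar u(t_0)+\varepsilon$ and letting $\varepsilon\to 0$ via the strict parabolic comparison principle. This delivers $(n-1)\tfrac{d}{dt}\ln\l_\k(\bar u(t))\le 1$ in the viscosity sense, and integrating from $0$ to $t$ yields $\l_\k(\bar u(t))\le e^{t/(n-1)}\l_\k(\bar u(0))$. The lower bound is entirely symmetric: at a spatial minimum of $u(\cdot,t_0)$ one has $\nabla_{\hat g}\varphi=0$ and $\nabla_{\hat g}^2\varphi\ge 0$, so by \eqref{meancurv.} $H\le(n-1)\l'_\k(\underline u)/\l_\k(\underline u)$, which reverses all the inequalities above and produces $\l_\k(\underline u(t))\ge e^{t/(n-1)}\l_\k(\underline u(0))$.

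I expect no serious obstacle; the argument is classical for star-shaped inverse mean curvature flows in warped-product spaces, with the only tiny subtlety being the non-smoothness of $\bar u,\underline u$ in $t$, which the Hamilton-type viscosity argument just sketched handles cleanly. The monotonicity and positivity of $\l'_\k$ on $(0,\infty)$ (following from $\l''_\k\ge 0$ and $\l'_\k(0)=0$, noted in Section~2) plus the positive mean curvature assumption on $\Sigma_0$ ensure the estimates stay meaningful for all $t\in[0,T^*)$.
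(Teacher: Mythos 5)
Your proof is correct and follows essentially the same strategy as the paper's: at a spatial extremum of $u(\cdot,t)$ one has $v=1$ and a sign on $\tilde g^{ij}\varphi_{ij}$, which via \eqref{meancurv.} pins down $H$ relative to $(n-1)\l'_\k/\l_\k$, and then \eqref{u} gives a one-sided ODE for $\l_\k(\bar u)$ or $\l_\k(\underline u)$ that integrates to the stated exponential bound. You supply a couple of details the paper leaves implicit — that passing from $u$ to $\varphi$ preserves the location and type of the extremum because $\l_\k>0$, and that the extremum functions are only Lipschitz in $t$ so the ODE inequality must be read in the Hamilton/Dini sense — both of which are standard but worth noting. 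You also in effect correct a typo in the paper's proof, which states the ODE with coefficient $(n-1)$ rather than $\frac{1}{n-1}$; your version is the one that actually integrates to $e^{t/(n-1)}$.
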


\begin{proof}
At the point where $u(\cdot, t)$ attains its  minimum, we have $v=1$ and $\varphi_{ij}\geq 0$, and hence
 $$H\leq \frac{(n-1)\l_\k'(u)}{\l_\k(u)}.$$
Thus from \eqref{u} we infer that\begin{eqnarray}
\frac{d}{dt}\inf_N \l_\k(\underline{u}(t))\geq (n-1)\l_\k(\underline{u}(t)),
\end{eqnarray}
from which the first assertion follows. The second one is proved in a similar way by considering the maximum point of $u(\cdot, t)$.
\end{proof}

To derive the $C^1$ estimate, we need  to estimate the  upper and lower bounds for $H$.
\begin{prop}\label{estH}We have $H\leq n-1+O(e^{-\frac{1}{n-1}t})$ and $H\geq Ce^{-\frac{1}{n-1}t}$ for some positive constant $C$ depending only on $n, m$ and $\S_0$.
\end{prop}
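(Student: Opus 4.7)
The strategy is to apply the parabolic maximum principle to the evolution equations of Lemma~\ref{evolv.lemm}, combined with the $C^0$ estimate of Proposition~\ref{C0} and the Ricci decay \eqref{Ric} of the ambient Kottler-Schwarzschild space.

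\emph{Upper bound.} Set $\phi := H^2$. Using Lemma~\ref{evolv.lemm}(3) together with the identity $\Delta H^2 = 2H \Delta H + 2|\nabla H|^2$, we compute
\[
\partial_t \phi \;=\; \frac{\Delta \phi}{H^2} \;-\; \frac{6|\nabla H|^2}{H^2} \;-\; 2|A|^2 \;-\; 2\,\mathrm{Ric}(\nu,\nu).
\]
By Cauchy--Schwarz, $|A|^2 \geq H^2/(n-1)=\phi/(n-1)$; by Proposition~\ref{C0}, $\l_\k(u(\cdot,t))\geq c\, e^{t/(n-1)}$, so \eqref{Ric} yields $-\mathrm{Ric}(\nu,\nu)\le (n-1)+Ce^{-nt/(n-1)}$. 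At any spatial maximum of $\phi$ the gradient vanishes and $\Delta \phi\le 0$, hence
\[
\tfrac{d}{dt}\phi_{\max}(t) \;\leq\; -\tfrac{2}{n-1}\bigl(\phi_{\max}(t)-(n-1)^2\bigr) \;+\; C e^{-nt/(n-1)},
\]
and an ODE comparison gives $\phi_{\max}(t)\leq (n-1)^2 + C'e^{-2t/(n-1)}$. The claimed bound $H\leq n-1+O(e^{-t/(n-1)})$ follows.

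\emph{Lower bound.} Dividing the evolution of $H$ from Lemma~\ref{evolv.lemm}(3) by $H$ yields
\[
\partial_t \ln H \;=\; \frac{\Delta \ln H - |\nabla \ln H|^2}{H^2} \;-\; \frac{|A|^2}{H^2} \;-\; \frac{\mathrm{Ric}(\nu,\nu)}{H^2}.
\]
Since $H\geq Ce^{-t/(n-1)}$ is equivalent to $(\ln H)_{\min}(t)\geq -t/(n-1)+\text{const}$, it suffices to establish a differential inequality of the form $\frac{d}{dt}(\ln H)_{\min}\geq -\frac{1}{n-1}+o(1)$ at a spatial minimum. This in turn requires upper control of $|A|^2/H^2$ at such a minimum, which we achieve by applying the parabolic maximum principle to the squared norm of the traceless second fundamental form $|A^\circ|^2 := |A|^2 - H^2/(n-1)$, whose evolution is extracted from Lemma~\ref{evolv.lemm}(2) using \eqref{Riem} and \eqref{Ric}. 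Combining the resulting uniform bound on $|A^\circ|^2$ with the already established upper bound on $H$ and with the Ricci decay gives the required ODI, and ODE integration concludes the lower bound.

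The principal obstacle is the uniform control of $|A^\circ|^2$: its evolution involves the full Riemann tensor of the Kottler-Schwarzschild space, which (unlike in the purely locally hyperbolic case treated in prior work) contains a nontrivial matter correction proportional to $m\l_\k^{-n}$, and the asymptotic decay of this correction must be carefully tracked throughout the parabolic computation in order to close the maximum principle argument.
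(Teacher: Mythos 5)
Your upper bound argument is correct and is essentially the same as the paper's: you derive the same evolution equation for $H^2$, apply $|A|^2\geq H^2/(n-1)$, control the $O(e^{-nr})$ error using Proposition~\ref{C0}, and close with an ODE comparison at the spatial maximum.

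Your lower bound argument has a genuine gap, and it is circular in a way that does not close. You propose to control $|A|^2/H^2$ at a spatial minimum of $\ln H$ by first proving a uniform bound on the traceless part $|A^\circ|^2$. But $|A|^2/H^2 = \tfrac{1}{n-1} + |A^\circ|^2/H^2$, and a merely uniform (non-decaying) bound on $|A^\circ|^2$ gives no control whatsoever on $|A^\circ|^2/H^2$ in the regime where $H$ may degenerate to zero — which is exactly what you are trying to rule out. You would need a decay estimate on $|A^\circ|^2$ matching the putative decay rate of $H^2$, and that is much stronger than what you claim. Worse, the evolution of $h_i^j$ in Lemma~\ref{evolv.lemm}(2) (hence of $|A^\circ|^2$) carries coefficients of order $1/H^2$ and $1/H^3$, so applying the maximum principle there already presupposes a lower bound on $H$. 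The argument feeds on the very conclusion it is meant to establish. (In the paper, a uniform bound on $h_i^j$ appears only in Proposition~\ref{C2}, after both Proposition~\ref{estH} and the improved bound $H\geq C$ of Proposition~\ref{est2H} are in place.)

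The paper's route to the lower bound avoids this entirely by working with the scalar graph equation rather than with the second fundamental form. One differentiates the parabolic PDE \eqref{phi}, $\partial_t\varphi = 1/F(u,\nabla_{\hat g}\varphi,\nabla_{\hat g}^2\varphi)$, in $t$ to obtain a linear parabolic equation for $\partial_t\varphi$ whose zeroth-order coefficient is $-\tfrac{2(n-1)\l_\k\l_\k''}{v^2F^2}$. The crucial structural input is $\l_\k''\geq 0$ (a special feature of the Kottler potential, established in Section~2), which makes this coefficient nonpositive; the maximum principle then gives that $\sup_N\partial_t\varphi = \sup_N\tfrac{v}{\l_\k H}$ is nonincreasing. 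Combined with $v\geq 1$ and the upper bound $\l_\k(\bar u(t))\leq Ce^{t/(n-1)}$ from Proposition~\ref{C0}, this yields $H\geq Ce^{-t/(n-1)}$ directly, with no need to touch $|A^\circ|^2$. To repair your proof you should replace the entire lower-bound portion with this argument; as written, that half does not go through.
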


\begin{proof} By Lemma \ref{evolv.lemm} and \eqref{Ric}, we have
\begin{eqnarray}\label{H}
\frac{\p}{\p t} H^2=\frac{\Delta H^2}{H^2}-\frac{3}{2}\frac{|\nabla H^2|^2}{H^4}-2|A|^2+2(n-1)+O(e^{-nr}).
\end{eqnarray}
In view of the inequality $|A|^2\geq \frac{1}{n-1}H^2$, by using Proposition \ref{C0} and the maximum principle, we deduce\begin{eqnarray*}
\frac{d}{dt}\sup_N H(\cdot,t)^2\leq -\frac{2}{n-1}\sup_N H(\cdot,t)^2+2(n-1)+O(e^{-\frac{n}{n-1}t}).
\end{eqnarray*}
The first assertion follows.

For the second assertion, we take derivative s of \eqref{phi} with respect to $t$ and get
\begin{eqnarray*}
\frac{\p}{\p t}\left(\frac{\p \varphi}{\p t}\right)=-\frac{1}{F^2}\frac{\p F}{\p \varphi_i}\left(\frac{\p \varphi}{\p t}\right)_i-\frac{1}{F^2}\frac{\p F}{\p \varphi_{ij}}\left(\frac{\p \varphi}{\p t}\right)_{ij}-\frac{2(n-1)\l_{\k}\l''_{\k}}{v^2F^2}\frac{\p \varphi}{\p t}.
\end{eqnarray*}
Since $\l''_{\k}(r)\geq 0$, by using the maximum principle, we have
\begin{eqnarray}
\frac{d}{dt}\sup_N\frac{\p \varphi}{\p t}(\cdot,t)\leq 0.
\end{eqnarray}
Taking into account of \eqref{phi} and Proposition \ref{C0},
 we conclude that $$H\geq C\frac{v}{\l_{\k}}\geq Ce^{-\frac{1}{n-1}t}.$$
\end{proof}

\begin{prop}\label{C1} We have $|\nabla_{\hat g} \varphi|_{\hat g}=O(e^{-\frac{1}{(n-1)^2}t})$ and $v=1+O(e^{-\frac{1}{(n-1)^2}t}).$
\end{prop}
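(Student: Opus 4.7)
My strategy is to apply the parabolic maximum principle to the support-function-type quantity $\chi=1/\langle\lambda_\kappa\partial_r,\nu\rangle$, whose evolution is already recorded in Lemma~\ref{evolv.lemm}(5). The link with $v$ is the graph identity $\langle\nu,\partial_r\rangle = 1/v$, which gives $\chi=v/\lambda_\kappa(u)$. Hence any decay estimate for $\chi$, combined with Proposition~\ref{C0}, translates into control of $v$, and hence of $|\nabla_{\hat g}\varphi|^2=v^2-1$.

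The first step is an algebraic cancellation in the curvature term of Lemma~\ref{evolv.lemm}(5). Writing $\mathrm{Ric}(\bar g) = -(n-1)\bar g + O(e^{-nr})$ by \eqref{Ric} and using $\chi^2\lambda_\kappa\langle\nu,\partial_r\rangle = \chi$, the $-(n-1)$ contributions cancel exactly, so that
$$
-\chi\,\mathrm{Ric}(\nu,\nu)+\chi^2\lambda_\kappa\,\mathrm{Ric}(\nu,\partial_r) = \chi(1+v)\cdot O(e^{-nr}).
$$
Combining this with Newton's inequality $|A|^2 \geq H^2/(n-1)$, the evolution of $\chi$ becomes
$$
\frac{\partial\chi}{\partial t} \leq \frac{\Delta\chi}{H^2} - \frac{2|\nabla\chi|^2}{\chi H^2} - \frac{\chi}{n-1} + \frac{\chi(1+v)\,O(e^{-nr})}{H^2}.
$$

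Next, I would apply the parabolic maximum principle. Propositions~\ref{C0} and \ref{estH} give $r\ge ct/(n-1)+O(1)$ and $1/H^2 \leq Ce^{2t/(n-1)}$, so after a short bootstrap the error term is uniformly controlled by some $O(e^{-\delta t})$. The maximum principle then delivers $\sup_N \chi(\cdot,t)\le Ce^{-t/(n-1)}$, hence $v=\lambda_\kappa\chi$ is bounded uniformly in $t$. To upgrade boundedness to convergence with explicit rate, I would then form the evolution of $|\nabla_{\hat g}\varphi|^2=v^2-1$ directly from \eqref{phi} — equivalently, expand $\partial_t v$ via $\partial_t v = (\lambda_\kappa'/\lambda_\kappa)(v^2/H) + \lambda_\kappa\,\partial_t\chi$ and use the asymptotics $\lambda_\kappa'/\lambda_\kappa = 1+O(e^{-nr})$ and $H = n-1+O(e^{-t/(n-1)})$ from Proposition~\ref{estH} — to obtain a parabolic inequality of the form $\partial_t(v^2-1)\le \tfrac{\Delta (v^2-1)}{H^2}-\tfrac{2}{(n-1)^2}(v^2-1)+O(e^{-\alpha t})$ at interior critical points. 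A Grönwall argument then yields $v^2-1 = O(e^{-2t/(n-1)^2})$, which is both claims of the proposition.

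The main obstacle is the weakness of the lower bound $H\geq Ce^{-t/(n-1)}$ in Proposition~\ref{estH}: it means that when dividing error terms by $H^2$ one loses a factor of $e^{2t/(n-1)}$, which must be absorbed by the $e^{-nr}$ decay from \eqref{Ric}. The resulting loss is what forces the rate in Proposition~\ref{C1} to be $1/(n-1)^2$ rather than the more natural $1/(n-1)$. Checking that the residual error in the $\chi$-inequality is genuinely integrable against $e^{t/(n-1)}$ — and that the bootstrap closes — is the technical heart of the argument; everything else is a direct consequence of the evolution equations already collected in Lemma~\ref{evolv.lemm} and the curvature decay \eqref{Ric}.
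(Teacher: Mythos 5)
Your two-step plan does not match the paper's proof, and there are genuine gaps in the details that matter.

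The paper proves Proposition \ref{C1} by evolving the quantity $\o=\tfrac12|\nabla_{\hat g}\varphi|^2_{\hat g}$ directly from \eqref{phi}, obtaining the explicit parabolic equation \eqref{omega}. The entire argument then hinges on the coefficient estimate \eqref{eq}: using $vF=\l_\k H$, the structural identity $\l_\k''=\l_\k+(n-2)m\l_\k^{1-n}$, and the inequality $-\k\le\l_\k^2-2m\l_\k^{2-n}$ (equivalently $(\l_\k')^2\ge 0$), the combination
$-\tfrac{2(n-2)\k}{v^2F^2}-\tfrac{2(n-1)\l_\k\l_\k''}{v^2F^2}$
is shown to equal $-\tfrac{2}{H^2}-\tfrac{2(n-2)(n+1)m}{\l_\k^nH^2}$ up to nothing at all, and then the one-sided bound $H\le n-1+O(e^{-t/(n-1)})$ from Proposition \ref{estH} gives $-\tfrac{2}{H^2}\le -\tfrac{2}{(n-1)^2}+O(\cdot)$. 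That algebraic absorption of the $\k$-term into the $\l_\k''$-term is the heart of the proof and is precisely what produces the weaker exponent $\tfrac{1}{(n-1)^2}$; your proposal never identifies it.

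Your first step, the $\chi$-argument, at best yields $\sup_N\chi\le Ce^{-t/(n-1)}$, which via $v=\l_\k\chi$ and Proposition \ref{C0} gives only that $v$ is \emph{bounded}, not that $v\to 1$. That is strictly weaker than the proposition and, as you note yourself, needs a separate upgrade. But this first step also does not feed into the second; it is a detour.

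Your second step is, in spirit, the paper's argument, but two of the ingredients you invoke are not available at this stage and one is wrong. First, $\l_\k'/\l_\k = \bigl(1+\k\l_\k^{-2}-2m\l_\k^{-n}\bigr)^{1/2}=1+\tfrac{\k}{2}\l_\k^{-2}+O(\l_\k^{-4})$, which is $1+O(e^{-2r})$ when $\k\neq 0$, not $1+O(e^{-nr})$ as you write; the $\tfrac{\k}{2}\l_\k^{-2}$ correction is of the same order as the term you are trying to control and cannot be discarded. Second, Proposition \ref{estH} gives only the one-sided bound $H\le n-1+O(e^{-t/(n-1)})$ together with the weak lower bound $H\ge Ce^{-t/(n-1)}$; the two-sided estimate $H=n-1+O(e^{-t/(n-1)})$ you invoke is only established later, in Proposition \ref{prop4.10}, after the full $C^2$-estimates. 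Using it here is circular. Third, your stated diagnosis of where the exponent $\tfrac{1}{(n-1)^2}$ comes from (the weak lower bound on $H$ forcing a loss when dividing errors by $H^2$) is not the operative mechanism: the exponent comes from the main term $-\tfrac{2}{H^2}\le-\tfrac{2}{(n-1)^2}$ after the $\k$-term is absorbed; the weak lower bound on $H$ only controls the $m$-dependent error. Indeed, this is confirmed by Proposition \ref{C1'}, where once $H\ge C>0$ is known (Proposition \ref{est2H}) one need not absorb the $\k$-term and the rate improves to $\tfrac{1}{n-1}$.

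In short: the route through $\chi$ and $v=\l_\k\chi$ is not the paper's, and the key structural cancellation that makes the coefficient $-\tfrac{2}{H^2}$ appear is missing from your argument, so the proposal as written does not close.
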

\begin{proof}
Let $\o=\frac12 |\nabla_{\hat g} \varphi|_{\hat g}^2$. Since \begin{eqnarray}
\frac{\p \varphi}{\p t}=\frac{v}{\l_{\k} H}:=\frac{1}{F(u, \nabla_{\hat g} \varphi, {\nabla_{\hat g}}^2\varphi)},
\end{eqnarray}
 one can verify that  the evolution equation of $\o$ is
\begin{eqnarray}\label{omega}
&&\frac{\p \o}{\p t}=\frac{\tilde{g}^{ij}}{v^2F^2}\o_{ij}-\frac{1}{F^2}\frac{\p F}{\p \varphi_i}\o_i-\frac{2(n-2)\k}{v^2F^2}\o-\frac{\tilde{g}^{ij}}{v^2F^2}\hat g^{kl}\varphi_{ik}\varphi_{jl}-\frac{2(n-1)\l_{\k}\l_{\k}''}{v^2F^2}\o.
\end{eqnarray}

Notice that $vF=\l H$ and $-\k\leq \l_{\k}^2-2m\l_{\k}^{2-n}$. Using \eqref{l''}, Proposition \ref{C0} and \ref{estH}, we have
\begin{eqnarray}\label{eq}
-\frac{2(n-2)\k}{v^2F^2}-\frac{2(n-1)\l_{\k}\l_{\k}''}{v^2F^2} &\leq &\frac{2(n-2)(\l_{\k}^2-2m\l_{\k}^{2-n})}{\l_{\k}^2H^2}-\frac{2(n-1)(1+(n-2)m\l_{\k}^{-n})}{H^2}\nonumber\\
&= & -\frac{2}{H^2}-\frac{2(n-2)(n+1)m}{\l_{\k}^n H^2}\nonumber\\&\leq & -\frac{2}{(n-1)^2}+Ce^{-\frac{2}{n-1}t}+Ce^{-\frac{n-2}{n-1}t}.
\end{eqnarray}
Thus by using  the maximum principle on \eqref{omega} we have
\begin{eqnarray}
&&\frac{\p }{\p t} \sup_N \o(\cdot, t)\leq \left(-\frac{2}{(n-1)^2}+Ce^{-\frac{2}{n-1}t}\right)\sup_N \o(\cdot, t),
\end{eqnarray}
which implies $\o=O(e^{-\frac{2}{(n-1)^2}t})$. The assertion follows.
\end{proof}

\begin{rema} Proposition \ref{C1} implies that the star-shapedness of $\S_t$ is preserved. Thus as long as the flow exists, we have $\<\p_r,\nu\>>0$ and a graph representation of $\S_t$.
\end{rema}

\begin{prop}\label{est2H} There exists a positive constant $C$ depending only on $n,$ $m$ and $\S_0$, such that $H\geq C$.
\end{prop}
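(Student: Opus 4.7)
\textbf{Proof proposal for Proposition \ref{est2H}.}

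The plan is to sharpen the lower bound on $H$ obtained in Proposition \ref{estH} by studying the evolution of $\varphi_t=\partial\varphi/\partial t$ via the scalar parabolic equation \eqref{phi}. Since $H=v/(\lambda_\kappa\varphi_t)$ and Proposition \ref{C0}--\ref{C1} give $\lambda_\kappa(u)\leq C'e^{t/(n-1)}$ and $v\geq 1/2$ for large $t$, it suffices to improve the decay of $\varphi_t$ from $O(1)$ (already implied by the monotonicity of $\sup_N\varphi_t$ in the proof of Proposition \ref{estH}) to the sharp rate $\sup_N\varphi_t\leq Ce^{-t/(n-1)}$. Combined, these bounds give $H\geq c>0$ for $t\geq T_0$, while for $t\in[0,T_0]$ one uses Proposition \ref{estH} directly to get $H\geq Ce^{-T_0/(n-1)}$.

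First, I would differentiate $\varphi_t=1/F$ with $F=((n-1)\lambda'_\kappa(u)-\tilde g^{ij}\varphi_{ij})/v^2=H\lambda_\kappa/v$. Using $u_t=\lambda_\kappa\varphi_t$ and the commutation of $\partial_t$ with spatial derivatives on $N$, one obtains the parabolic equation
\begin{equation*}
\varphi_{tt}-\frac{\tilde g^{ij}}{v^2 F^2}(\varphi_t)_{ij}+\text{(first order in }\nabla\varphi_t\text{)}=-\frac{(n-1)\lambda''_\kappa(u)\lambda_\kappa(u)}{v^2}\varphi_t^{3},
\end{equation*}
where I have used $F_u=(n-1)\lambda''_\kappa/v^2$ and rewritten $F^{-2}=v^2/(H^2\lambda_\kappa^2)$ together with $H=v/(\lambda_\kappa\varphi_t)$. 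The operator is parabolic because $-\tilde g^{ij}/(v^2 F^2)$ is positive definite, so the parabolic maximum principle yields
\begin{equation*}
\frac{d}{dt}\sup_N\varphi_t(\cdot,t)\leq -\frac{(n-1)\lambda''_\kappa\lambda_\kappa}{v^2}\Big|_{\text{max}}(\sup_N\varphi_t)^3.
\end{equation*}

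Second, I would apply the identity $\lambda''_\kappa=\lambda_\kappa+(n-2)m\lambda_\kappa^{1-n}$, so that $\lambda''_\kappa\lambda_\kappa=\lambda_\kappa^2(1+O(\lambda_\kappa^{-n}))$. Combined with $\lambda_\kappa(u)\geq\lambda_\kappa(\underline u(t))\geq c_0 e^{t/(n-1)}$ from Proposition \ref{C0} and $v=1+o(1)$ from Proposition \ref{C1}, for all $t\geq T_0$ one has
\begin{equation*}
\frac{d}{dt}\sup_N\varphi_t\leq -\tfrac{(n-1)c_0^2}{4}e^{2t/(n-1)}(\sup_N\varphi_t)^3.
\end{equation*}
Setting $y(t)=\sup_N\varphi_t$, this rewrites as $\frac{d}{dt}(y^{-2})\geq\frac{(n-1)c_0^2}{2}e^{2t/(n-1)}$, and integration from $T_0$ gives $y(t)\leq Ce^{-t/(n-1)}$ for all $t$ large enough. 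Then
\begin{equation*}
H=\frac{v}{\lambda_\kappa\varphi_t}\geq\frac{1/2}{C'e^{t/(n-1)}\cdot Ce^{-t/(n-1)}}=c>0,
\end{equation*}
which together with the bounded-time estimate from Proposition \ref{estH} yields the claim.

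The main obstacle is verifying that the nondegeneracy $\lambda''_\kappa\lambda_\kappa\geq\tfrac12\lambda_\kappa^{2}$ holds on the region reached by the flow for $t\geq T_0$. This could be delicate in the case $\kappa=-1$ with $m$ close to the critical value $m_c$, because $\lambda''_\kappa$ can vanish on the horizon when $m=m_c$; however, since the $C^{0}$-estimate forces $\lambda_\kappa(u(\cdot,t))$ to grow exponentially in $t$, the degeneracy only affects the finite-time interval $[0,T_0]$ where Proposition \ref{estH} already suffices. A secondary technical point is the correct treatment of the first order terms in $\nabla\varphi_t$ at the maximum, but they vanish there and play no role in the scalar ODE for $\sup_N\varphi_t$.
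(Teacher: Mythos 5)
Your proof is correct, and it proves the same key decay estimate as the paper, but via a different computational route. Observe that the quantity you study, $\varphi_t = v/(\lambda_\kappa H)$, is exactly the ratio $\chi/H$ where $\chi = \langle\lambda_\kappa\partial_r,\nu\rangle^{-1}$ is the quantity the paper uses; and the paper also proves precisely $\chi/H \le Ce^{-t/(n-1)}$ and then concludes $H \ge C$ from $\chi \ge C^{-1}e^{-t/(n-1)}$. Where you differ is in \emph{how} you get the decay of $\varphi_t$. The paper works with the geometric evolution equations of $\log H$ and $\log\chi$ from Lemma \ref{evolv.lemm} (parts (3) and (5)), subtracts them so that the common $|A|^2/H^2$ term cancels, and then invokes the upper bound $H \le n-1 + O(e^{-t/(n-1)})$ from Proposition \ref{estH} to reduce the coefficient $-(n-1)/H^2$ to a near-constant $-1/(n-1)$; this yields a linear differential inequality for $\sup(\log\chi - \log H)$ whose integration gives the exponential decay directly. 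You instead differentiate the scalar parabolic equation \eqref{phi} in $t$, obtaining the nonlinear ODE $\frac{d}{dt}\sup\varphi_t \lesssim -e^{2t/(n-1)}(\sup\varphi_t)^3$, with the cubic nonlinearity coming from $1/F^2 = \varphi_t^2$ and the exponential coefficient from $\lambda_\kappa\lambda_\kappa''\gtrsim \lambda_\kappa^2 \gtrsim e^{2t/(n-1)}$; you then integrate the Bernoulli-type ODE. Note the two approaches are logarithmically equivalent: dividing your equation by $\varphi_t$ and using $\varphi_t^2 = v^2/(\lambda_\kappa^2 H^2)$ turns your cubic term into $-(n-1)\lambda_\kappa''/(\lambda_\kappa H^2) \approx -(n-1)/H^2$, which is the paper's coefficient. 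What your approach buys is that it stays entirely within the scalar PDE \eqref{phi} (no need for the evolution equation for $\chi$ in Lemma \ref{evolv.lemm}(5)), and it uses Proposition \ref{estH} only through the dependence on Propositions \ref{C0} and \ref{C1}, rather than invoking the near-constancy of $H$ directly. What the paper's approach buys is a cleaner linear inequality with an explicit constant coefficient $-1/(n-1)$, which is also what motivates the statement of Proposition \ref{C1'}. Your handling of the $m$ close to $m_c$ case --- noting that the possible degeneracy of $\lambda_\kappa''$ at the horizon is pushed out exponentially by the $C^0$-estimate --- is correct and is the same observation that makes the paper's $\lambda_\kappa\lambda_\kappa'' \ge 0$ bound sufficient in Proposition \ref{C1}. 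One small inaccuracy: $v = \sqrt{1+|\nabla_{\hat g}\varphi|^2} \ge 1$ always, so the $v\ge 1/2$ hedge is unnecessary.
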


\begin{proof}
Recall the function $\chi=\frac{1}{\<\l(r)\p_r, \nu\>}$. Proposition \ref{C0} and \ref{C1} ensure that $\chi$ is well defined and there exists $C>0$ such that $C^{-1}e^{-\frac{1}{n-1}t}\leq\chi\leq Ce^{-\frac{1}{n-1}t}$.

By Lemma \ref{evolv.lemm} and \eqref{Ric}, we have
\begin{eqnarray}\label{chi}
\frac{\p}{\p t}\log H= \frac{\Delta\log H}{H^2} -\frac{|\nabla \log H|^2}{H^2}-\frac{|A|^2}{H^2}+\frac{n-1}{H^2}+\frac{1}{H^2}O(e^{-nr})
\end{eqnarray} and
\begin{eqnarray}\label{chi}
\frac{\p}{\p t}\log \chi= \frac{\Delta\log\chi}{H^2} -\frac{|\nabla \log\chi|^2}{H^2}-\frac{|A|^2}{H^2}+\frac{1}{H^2}O(e^{-nr}).
\end{eqnarray}
Combining \eqref{H} and \eqref{chi} and using Proposition \ref{C0}, we obtain
\begin{eqnarray*}
\frac{\p}{\p t}(\log\chi-\log H)&=&\frac{\Delta(\log\chi-\log H)}{H^2}\\&&+\frac{\<\nabla (\log H+\log \chi), \nabla (\log H-\log \chi)\>}{H^2}\\&&-\frac{n-1}{H^2}+\frac{Ce^{-\frac{n}{n-1}t}}{H^2}.
\end{eqnarray*}
Using Proposition \ref{estH} and the maximum principle, we have
\begin{eqnarray}
\frac{d}{d t}\sup_N(\log\chi-\log H)(\cdot,t)\leq-\frac{1}{n-1}+Ce^{-\frac{2}{n-1}t}+Ce^{-\frac{n-2}{n-1}t}.
\end{eqnarray}
Hence $e^{\log\chi-\log H}\leq Ce^{-\frac{1}{n-1}t}$. Note that $\chi=\frac{v}{\l}$. Consequently, $H\geq C$.
\end{proof}

With the help of Proposition \ref{est2H}, we are able to improve Proposition \ref{C1}.
\begin{prop}\label{C1'} We have $|\nabla_{\hat g} \varphi|_{\hat g}=O(e^{-\frac{1}{n-1}t})$ and $v=1+O(e^{-\frac{1}{n-1}t}).$
\end{prop}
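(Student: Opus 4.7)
My plan is to revisit and sharpen the estimate from the proof of Proposition \ref{C1}, exploiting the uniform positive lower bound $H\ge C>0$ that Proposition \ref{est2H} now provides. In that earlier argument, the coefficient of $\o$ in the evolution equation \eqref{omega} was bounded via the crude inequality $-\k\le \l_{\k}^2-2m\l_{\k}^{2-n}$ (i.e.\ nonnegativity of $V_{\k,m}^2$), which effectively replaces the eventually dominant term $-\frac{2(n-1)}{H^2}$ by $-\frac{2}{H^2}$ and therefore loses a factor of $n-1$ in the asymptotic exponent. With a uniform positive lower bound on $H$ now available, this lossy reduction becomes unnecessary.

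Concretely, I would begin from \eqref{omega}. Using $vF=\l_{\k} H$ so that $v^2F^2=\l_{\k}^2 H^2$, together with the identity $\l_{\k}\l_{\k}''=\l_{\k}^2+(n-2)m\l_{\k}^{2-n}$, the zero-order coefficient of $\o$ is \emph{exactly}
\[
-\frac{2(n-2)\k}{\l_{\k}^2 H^2}-\frac{2(n-1)}{H^2}-\frac{2(n-1)(n-2)m}{\l_{\k}^n H^2}.
\]
Each piece can then be estimated separately. Propositions \ref{C0} and \ref{est2H} give $\l_{\k}\ge Ce^{t/(n-1)}$ and $H\ge C>0$, so the first term is $O(e^{-2t/(n-1)})$ and the third term is $O(e^{-nt/(n-1)})$; here the uniform lower bound on $H$ is crucial, since for $\k=-1$ the weaker bound $H\ge Ce^{-t/(n-1)}$ from Proposition \ref{estH} alone would leave the first term only as $O(1)$. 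For the middle (dominant) term, the bound $H\le n-1+O(e^{-t/(n-1)})$ from Proposition \ref{estH} yields
\[
-\frac{2(n-1)}{H^2}\le -\frac{2}{n-1}+O(e^{-t/(n-1)}),
\]
which is sharper by a factor of $n-1$ than the analogous step in \eqref{eq}.

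Since the Hessian term $-\frac{\tilde{g}^{ij}}{v^2F^2}\hat g^{kl}\varphi_{ik}\varphi_{jl}$ in \eqref{omega} is non-positive ($\tilde g^{ij}$ being positive definite) and the gradient and Hessian of $\o$ vanish at a maximum, the parabolic maximum principle would give
\[
\frac{d}{dt}\sup_N \o(\cdot,t)\le \Big(-\frac{2}{n-1}+Ce^{-t/(n-1)}\Big)\sup_N \o(\cdot,t).
\]
Since $\int_0^\infty e^{-t/(n-1)}\,dt<\infty$, integration yields $\sup_N \o(\cdot,t)\le Ce^{-2t/(n-1)}$, i.e.\ $|\nabla_{\hat g}\varphi|_{\hat g}=O(e^{-t/(n-1)})$, after which $v=\sqrt{1+|\nabla_{\hat g}\varphi|_{\hat g}^2}=1+O(e^{-2t/(n-1)})$ gives the rest. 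The main difficulty is really conceptual: locating where the earlier proof wastes a factor of $n-1$; beyond that observation, the technical estimates simply mirror (and indeed simplify) those of Proposition \ref{C1}, with Proposition \ref{est2H} supplying exactly the ingredient needed to handle each term on its own.
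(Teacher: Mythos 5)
Your proof is correct and follows essentially the same route as the paper's: both use the exact identity $v^2F^2=\l_{\k}^2H^2$ together with $\l_{\k}\l_{\k}''=\l_{\k}^2+(n-2)m\l_{\k}^{2-n}$, bound the dominant contribution $-\frac{2(n-1)}{H^2}\leq -\frac{2}{n-1}+o(1)$ via the upper bound on $H$ from Proposition \ref{estH}, and use the uniform lower bound $H\geq C>0$ from Proposition \ref{est2H} (together with $\l_{\k}\geq Ce^{t/(n-1)}$) to render the $\k$- and $m$-terms negligible without the lossy replacement $-\k\leq\l_{\k}^2-2m\l_{\k}^{2-n}$. Your diagnosis of where Proposition \ref{C1} loses the factor of $n-1$ is exactly right, and the remaining maximum-principle argument is standard; you even obtain the slightly sharper $v=1+O(e^{-2t/(n-1)})$.
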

\begin{proof}We need the following refinement of \eqref{eq}, by taking Proposition \ref{est2H} into account:
\begin{eqnarray*}
-\frac{2(n-2)\k}{v^2F^2}=-\frac{2(n-2)\k}{\l_{\k}^2H^2}\leq Ce^{-\frac{2}{n-1}t};
\end{eqnarray*}
\begin{eqnarray*}
-\frac{2(n-1)\l_{\k}\l_{\k}''}{v^2F^2}&=&-\frac{2(n-1)(1+\frac{n-2}{2}m\l_{\k}^{-n})}{H^2}\\&\leq & -\frac{2}{(n-1)}+Ce^{-\frac{2}{n-1}t}+Ce^{-\frac{n-2}{n-1}t}.
\end{eqnarray*}
Then the proof follows the same way as Proposition \ref{C1}.
 \end{proof}

We  now derive the $C^2$ estimates.
\begin{prop}\label{C2} The second fundamental form $h_{ij}$ is uniformly bounded. Consequently, $|\nabla_{\hat g}^2 \varphi|_{\hat g}\leq C$.
\end{prop}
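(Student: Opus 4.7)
The plan is to apply the parabolic maximum principle to a suitable modification of the largest principal curvature of $\S_t$, using the evolution equation of the shape operator $h_i^j$ from Lemma \ref{evolv.lemm}(2) and the bounds already obtained: the $C^0$ bound from Proposition \ref{C0}, the refined $C^1$ bound from Proposition \ref{C1'}, and the two-sided bound $C^{-1}\le H\le C$ from Propositions \ref{estH} and \ref{est2H}. Since $H=\sum_i\kappa_i$ is bounded on both sides, it is enough to bound $\kappa_{\max}$ from above; the lower bound on each $\kappa_i$, hence the pointwise bound on $h_{ij}$, then follow from $\kappa_i\ge H-(n-2)\kappa_{\max}$.

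The first step is to record the evolution inequality for $\kappa_1:=\kappa_{\max}$ at its spatial maximum. In a local orthonormal frame diagonalizing $h_i^j$ with $h_1^1=\kappa_1$, using $\nabla\kappa_1=0$, $\Delta\kappa_1\le 0$ at the maximum and the ambient curvature estimates \eqref{Riem}, \eqref{Ric}, Lemma \ref{evolv.lemm}(2) yields
\begin{equation*}
\frac{d\kappa_1}{dt}\le\frac{|A|^2}{H^2}\kappa_1-\frac{2\kappa_1^2}{H}+\frac{C}{H^2}(1+\kappa_1),
\end{equation*}
where the $O(e^{-nr})$ corrections from \eqref{Riem}, \eqref{Ric} have been absorbed via Proposition \ref{C0}, and the Ricci term $-\frac{Ric(\nu,\nu)}{H^2}\kappa_1$ contributes a linear piece to the remainder.

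The \emph{main obstacle} is that the reaction $\frac{|A|^2}{H^2}\kappa_1-\frac{2\kappa_1^2}{H}$ is not signed, because $\S_t$ is only mean convex: if $\kappa_1$ were very large while $H$ stayed bounded, the identity $H=\sum_i\kappa_i$ would force some $\kappa_j$ to be very negative, so that $|A|^2\gtrsim\kappa_1^2$ and the reaction becomes a positive cubic expression. To circumvent this, I would follow the scheme of Brendle-Hung-Wang \cite{BHW} and introduce an auxiliary test quantity tailored to the warped-product structure of $P_{\k,m}$, typically of the form
\begin{equation*}
\Psi(x,t)=\frac{\kappa_{\max}(x,t)-\tfrac{\l'_\k(r)}{\l_\k(r)}}{\chi(x,t)^\a},
\end{equation*}
where $\chi=\<\l_\k\p_r,\nu\>^{-1}$, $r=r(x,t)$ is the radial coordinate of the point on $\S_t$, and $\a>0$ is to be chosen. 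The motivation is that on a slice one has $\kappa_i\equiv \l'_\k/\l_\k$, so $\Psi\equiv 0$ on slices, and the evolution of $\chi$ in Lemma \ref{evolv.lemm}(5) contains the compensating term $-\frac{|A|^2}{H^2}\chi$ whose contribution to $\frac{d\Psi}{dt}$ cancels the bad term in $\frac{d\kappa_{\max}}{dt}$ for the correct choice of $\a$. A careful computation of $\frac{d\Psi}{dt}$ at an interior spatial maximum, combined with Lemma \ref{evolv.lemm}(4),(5), the exponential decay $\chi\le Ce^{-t/(n-1)}$ from Proposition \ref{est2H}, and the ambient curvature bounds, should then produce a differential inequality of the form $\frac{d\Psi}{dt}\le -\e\,\Psi+C$, whence the maximum principle yields $\Psi\le C$ and thus $\kappa_{\max}\le C$.

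With $\kappa_{\max}$ bounded above and $H\ge C^{-1}$, all principal curvatures lie in a bounded interval and $h_{ij}$ is uniformly bounded. Finally, solving for $\varphi_{ij}$ from \eqref{g,h}, $\varphi_{ij}=\l'_\k(\hat g_{ij}+\varphi_i\varphi_j)-\frac{v}{\l_\k}h_{ij}$, and using Propositions \ref{C0} and \ref{C1'}, gives $|\nabla_{\hat g}^2\varphi|_{\hat g}\le C$.
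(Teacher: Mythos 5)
You correctly identify the obstacle: because $\Sigma_t$ is only mean convex, the reaction term $\tfrac{|A|^2}{H^2}\kappa_1 - \tfrac{2\kappa_1^2}{H}$ is not signed. But the fix you propose does not work as written, and the paper uses a different, much simpler device.

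Your test quantity $\Psi=(\kappa_{\max}-\l'_\k/\l_\k)/\chi^{\a}$ with $\a>0$ has the sign \emph{backwards}: the contribution of the $-\tfrac{|A|^2}{H^2}\chi$ term in Lemma~\ref{evolv.lemm}(5) to $\frac{\p\Psi}{\p t}$ is $+\a\,\tfrac{|A|^2}{H^2}\Psi$, which \emph{adds} to the bad term $+\tfrac{|A|^2}{H^2}\kappa_{\max}/\chi^\a$ rather than cancelling it. Cancellation would require multiplying by $\chi$ (i.e.\ $\a<0$), but then the conclusion collapses: $\chi\sim e^{-t/(n-1)}$ (Proposition~\ref{est2H}), so a uniform bound on $\kappa_{\max}\chi^{|\a|}$ only yields $\kappa_{\max}\le Ce^{|\a|t/(n-1)}$, not a uniform bound. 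So as a second step you would need an additional ingredient, and the sketch does not supply it.

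The paper avoids all of this by setting $M_i^j=Hh_i^j$. From Lemma~\ref{evolv.lemm}(2) and (3), the term $\tfrac{|A|^2}{H^2}h_i^j$ in the evolution of $h_i^j$, multiplied by $H$, is exactly cancelled by $h_i^j\cdot\left(-\tfrac{|A|^2}{H}\right)$ coming from the evolution of $H$; the ambient-curvature and Ricci terms combine (using \eqref{Riem}, \eqref{Ric}) into $+\tfrac{2(n-1)}{H^2}M_i^j$ plus errors of size $\left(\tfrac{|M|}{H^2}+1\right)O(e^{-\frac{n}{n-1}t})$. Thus the maximal eigenvalue $\mu$ of $M$ satisfies
\begin{equation*}
\frac{\p\mu}{\p t}\le -\frac{2\mu^2}{H^2}+\frac{2(n-1)\mu}{H^2}+\Big(\frac{\mu}{H}+1\Big)O(e^{-\frac{n}{n-1}t}),
\end{equation*}
whose reaction is strictly negative for $\mu$ large, and the maximum principle bounds $\mu$ from above directly. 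Since $C_1\le H\le C_2$ by Propositions~\ref{estH} and~\ref{est2H}, bounding $M=Hh$ above and below is equivalent to bounding $h$, so $h_i^j$ is uniformly bounded. Your final step (solving for $\varphi_{ij}$ from \eqref{g,h} using Propositions~\ref{C0} and~\ref{C1'}) is the same as the paper's and is fine. In short: the correct auxiliary factor is $H$, not $\chi$, precisely because $H$ is already known to be bounded on both sides while $\chi$ decays exponentially.
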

\begin{proof} Let $M_i^j=Hh_i^j$. By Lemma \ref{evolv.lemm}, we have that $M_i^j$  evolves under
\begin{eqnarray*}
\frac{\p M_i^j}{\p t}&=&\frac{\Delta M_i^j}{H^2}-2\frac{\nabla^k H\nabla_k M_i^j}{H^3}-2\frac{\nabla_i H\nabla^j H}{H^2}\\&&-2\frac{M_i^kM_k^j}{H^2}+\frac{2(n-1)M_i^j}{H^2}+\left(\frac{|M|}{H^2}+1\right)O(e^{-\frac{n}{n-1}t}).
\end{eqnarray*}
Hence the maximal eigenvalue $\mu$ of $M_i^j$ satisfies
\begin{eqnarray}
\frac{\p \mu}{\p t}&=&-2\frac{\mu^2}{H^2}+\frac{2(n-1)\mu}{H^2}+\left(\frac{\mu}{H}+1\right)O(e^{-\frac{n}{n-1}t}).
\end{eqnarray}
 In view of Proposition \ref{estH} and \ref{est2H},  by using  the maximum principle we know that $\mu$ is uniformly bounded from above. Combining the fact $C_1\leq H\leq C_2$, we conclude that $h_i^j$ is uniformly bounded both from above and below.
\end{proof}

Proposition \ref{C0}--\ref{est2H} ensure the uniform parabolicity of equation \eqref{phi}. With the $C^2$ estimates, we can derive  the higher oder estimates via standard parabolic Krylov and Schauder theory, which allows us to obtain the long time existence for the flow.
\begin{prop} The flow \eqref{flow} exists for $t\in [0,\infty)$.
\end{prop}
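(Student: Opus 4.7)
The plan is to conclude long-time existence by a standard continuation argument, using the a priori estimates already established in Propositions~\ref{C0}--\ref{C2} to rule out finite-time breakdown. Let $T^{*}\in (0,\infty]$ be the maximal existence time of the smooth flow \eqref{flow}, and recall that throughout $[0,T^{*})$ the surfaces $\Sigma_{t}$ remain star-shaped (by the remark after Proposition~\ref{C1}), so the flow is equivalent to the scalar fully nonlinear parabolic equation
\[
\frac{\partial \varphi}{\partial t}=\frac{1}{F(u,\nabla_{\hat g}\varphi,\nabla_{\hat g}^{2}\varphi)}
\]
on the closed manifold $N$. First I would argue that this equation is uniformly parabolic on $[0,T^{*})$: Propositions~\ref{C0} and~\ref{C1'} give $C^{0}$ and $C^{1}$ bounds for $\varphi$, Proposition~\ref{C2} gives a $C^{2}$ bound, and Propositions~\ref{estH} and~\ref{est2H} give two-sided bounds $0<C_{1}\le H\le C_{2}$. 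Since the symbol of the linearization of $1/F$ at a fixed admissible jet is a positive multiple of $H^{-2}\tilde g^{ij}$, these bounds force ellipticity constants that depend only on $n$, $m$ and $\Sigma_{0}$, but not on $t\in[0,T^{*})$.

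Next, having uniform parabolicity together with the $C^{2}$ bound, I would invoke the Evans--Krylov theorem for concave (or convex) fully nonlinear parabolic equations to obtain a uniform $C^{2,\alpha}$ estimate for $\varphi$ on $N\times[0,T^{*})$ for some $\alpha\in(0,1)$. The operator $F$ as a function of $\nabla_{\hat g}^{2}\varphi$ (with the other arguments frozen) is concave, which is exactly the structural condition needed. Once this $C^{2,\alpha}$ bound is in hand, linear parabolic Schauder theory applied to the equation satisfied by $\varphi$, and then to equations satisfied by its derivatives, yields by a standard bootstrap uniform $C^{k,\alpha}$ bounds on $N\times[0,T^{*})$ for every $k$, with constants independent of $t$.

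To conclude, suppose for contradiction that $T^{*}<\infty$. The uniform $C^{k,\alpha}$ bounds imply that $\varphi(\cdot,t)$ converges in $C^{k}$ as $t\uparrow T^{*}$ to a smooth star-shaped limit $\varphi(\cdot,T^{*})$ defining a smooth strictly mean-convex hypersurface $\Sigma_{T^{*}}$ (strict mean convexity is preserved because $H\ge C>0$ on $[0,T^{*})$). Applying the short-time existence theorem for \eqref{flow} with initial datum $\Sigma_{T^{*}}$ then extends the flow smoothly beyond $T^{*}$, contradicting maximality. Hence $T^{*}=\infty$.

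I do not expect a genuine obstacle here: all the serious work---star-shapedness preservation, $C^{1}$ bounds, and especially the two-sided mean curvature bounds that guarantee uniform parabolicity---has already been carried out. The only point that deserves care is checking that the structural hypotheses of the Evans--Krylov theorem are satisfied for $F$ as in \eqref{phi} on the admissible jet region cut out by the previous estimates; this is the step that could require a few lines of verification, but it is otherwise a routine application of the parabolic regularity machinery.
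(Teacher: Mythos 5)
Your proposal is correct and follows essentially the same route as the paper: the uniform parabolicity supplied by Propositions~\ref{C0}--\ref{est2H}, higher-order bounds via Krylov and Schauder estimates, and a standard continuation argument from the resulting uniform $C^{k,\alpha}$ control. One small remark: since $F$ in \eqref{phi} is affine in $\nabla_{\hat g}^{2}\varphi$, the scalar flow equation is in fact quasilinear, so Krylov--Safonov plus Schauder already suffice and the full Evans--Krylov concavity hypothesis need not be checked (though invoking it does no harm, as $1/F$ is convex in the Hessian on the admissible region).
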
\qed

With Proposition \ref{C0}--\ref{C1'} at hand, we can follow the same argument of Proposition 15 and 16 in \cite{BHW} to obtain improved estimates for $H$ and $h_i^j$.

\begin{prop} \label{prop4.10}$H=n-1+O(te^{-\frac{2}{n-1}t})$ and  $|h_i^j-\delta_i^j|\leq O(t^2e^{-\frac{2}{n-1}t}).$
\end{prop}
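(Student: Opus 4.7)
The plan is to adapt the bootstrap strategy of Propositions 15 and 16 in \cite{BHW} to our Kottler background $(P_{\k,m},\bar g)$. One first sharpens the scalar estimate for $H$ using the refined $C^1$-bound of Proposition \ref{C1'}, and then feeds this improved estimate into an evolution equation for a tensorial quantity built from $h_i^j$ to obtain the pinching rate.

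For the refined estimate on $H$, I would start from the evolution equation \eqref{H} for $H^2$ from the proof of Proposition \ref{estH}, combine it with the sharp inequality $|A|^2\geq H^2/(n-1)$, the curvature decay $|Ric(\bar g)+(n-1)\bar g|_{\bar g}=O(e^{-nr})$ from \eqref{Ric}, and the asymptotics $r(t)\sim t/(n-1)$ from Proposition \ref{C0}. The parabolic maximum principle then yields a differential inequality for $Q:=\sup_N(H^2-(n-1)^2)$ of the schematic form $\frac{d}{dt}Q\leq -\frac{2}{n-1}Q+E(t)$, where the forcing $E(t)$ decays as $e^{-2t/(n-1)}$ once one plugs in the sharp $C^1$-bound of Proposition \ref{C1'}. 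Because this forcing rate matches the homogeneous decay rate, an ODE comparison produces an extra polynomial factor and gives $\sup_N H\leq n-1+O(te^{-2t/(n-1)})$. The matching lower bound is obtained by redoing the comparison between $\log\chi$ and $\log H$ from the proof of Proposition \ref{est2H} with the refined $C^1$-estimate, using the sharp identities $(\l_{\k}')^2=\l_{\k}^2+\k-2m\l_{\k}^{2-n}$ and $\l_{\k}\l_{\k}''=\l_{\k}^2+(n-2)m\l_{\k}^{2-n}$, which supply a matching-rate ODE closing the estimate to $H=n-1+O(te^{-2t/(n-1)})$.

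For the refined estimate on $h_i^j$, I would next consider $M_i^j=Hh_i^j$ and its evolution from the proof of Proposition \ref{C2}, and set $\tilde N_i^j:=M_i^j-(n-1)\delta_i^j$. Substituting the Step-1 estimate into the evolution produces a system of the schematic form $\frac{\partial\tilde N_i^j}{\partial t}=\frac{\Delta\tilde N_i^j}{H^2}-\frac{2\nabla^k H\nabla_k \tilde N_i^j}{H^3}-\frac{2}{n-1}\tilde N_i^j+\mathcal Q(\tilde N)+\mathcal F(t)$, where $\mathcal Q(\tilde N)$ is quadratic in $\tilde N$ and $\mathcal F(t)=O(te^{-2t/(n-1)})$ is the forcing inherited from the improved $H$-bound and from the curvature correction $(2m\l_{\k}^{-n}-1)(\bar g_{ik}\bar g_{jl}-\bar g_{il}\bar g_{jk})$ in the Riemann tensor. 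Applying the maximum principle to the largest eigenvalue of $\tilde N_i^j$ and invoking the same matching-rate ODE comparison gains one more polynomial factor, giving $|\tilde N_i^j|\leq Ct^2 e^{-2t/(n-1)}$. Dividing by $H$ and combining with Step 1 then proves $|h_i^j-\delta_i^j|\leq O(t^2 e^{-2t/(n-1)})$.

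The hardest part will be producing the precise polynomial pre-factors $t$ and $t^2$. These originate from the resonance between the heat-type relaxation rate $2/(n-1)$ and forcing rates of the same order coming from the defect $|A|^2-H^2/(n-1)$ in Step 1 and from the $H$-forcing in Step 2. One must carefully verify that the genuine mass term $(n-2)m\l_{\k}^{-n}$ in the Ricci and Riemann tensors contributes only at the sub-dominant rate $e^{-nt/(n-1)}$, and that no additional resonant forcing is hidden in the quadratic tensorial terms $\mathcal Q(\tilde N)$. Once this book-keeping is in place, the matching-rate ODE comparison proceeds exactly as in \cite[Propositions 15 and 16]{BHW}.
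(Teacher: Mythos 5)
Your proposal takes essentially the same route as the paper: the paper's entire ``proof'' of Proposition \ref{prop4.10} is the single sentence preceding it, which defers to Propositions 15 and 16 of \cite{BHW} in combination with Propositions \ref{C0}--\ref{C1'}, and your plan is exactly this adaptation. You actually supply more detail than the paper does (identifying the $\log\chi-\log H$ comparison for the lower bound, the tensor $M_i^j-(n-1)\delta_i^j$, and the resonance mechanism behind the $t$ and $t^2$ prefactors, as well as the checks that the genuine mass terms $(n-2)m\l_\k^{-n}$ only enter at the sub-critical rate $e^{-nt/(n-1)}$); the one place your sketch is somewhat vague is in pinpointing precisely which term forces at the critical rate $e^{-2t/(n-1)}$ in Step 1, but since the paper itself offers no computation at all, this is not a gap relative to what the paper provides.
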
\qed

Consequently, we have
\begin{prop}\label{C2phi} $|\nabla_{\hat g}^2 \varphi|_{\hat g}\leq O(t^2e^{-\frac{1}{n-1}t}).$
\end{prop}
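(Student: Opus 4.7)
The plan is to invert the second fundamental form identity \eqref{g,h} to express $\nabla_{\hat g}^2\varphi$ in terms of $h_i^j$, and then use the improved pinching estimate from Proposition \ref{prop4.10} together with the $C^1$ estimate (Proposition \ref{C1'}) and the asymptotics of $\l_\k$.

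First, from \eqref{g,h} we solve for the Hessian:
\begin{equation*}
\varphi_{ij}=\l'_\k(\hat g_{ij}+\varphi_i\varphi_j)-\frac{v}{\l_\k}h_{ij}.
\end{equation*}
Using $h_{ij}=g_{ik}h_j^k=\l_\k^2(\hat g_{ik}+\varphi_i\varphi_k)h_j^k$ and splitting $h_j^k=\delta_j^k+(h_j^k-\delta_j^k)$, we obtain
\begin{equation*}
\varphi_{ij}=(\l'_\k-v\l_\k)(\hat g_{ij}+\varphi_i\varphi_j)-v\l_\k(\hat g_{ik}+\varphi_i\varphi_k)(h_j^k-\delta_j^k).
\end{equation*}

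Next I would estimate each factor on the right. For the first term, by \eqref{l''} we have $\l'^2_\k-\l_\k^2=\k-2m\l_\k^{2-n}$, so $\l'_\k-\l_\k=O(\l_\k^{-1})=O(e^{-\frac{1}{n-1}t})$ by Proposition \ref{C0}. Moreover $v-1=\frac{|\nabla_{\hat g}\varphi|_{\hat g}^2}{v+1}=O(e^{-\frac{2}{n-1}t})$ from Proposition \ref{C1'}, so $(v-1)\l_\k=O(e^{-\frac{1}{n-1}t})$. Combining, $\l'_\k-v\l_\k=O(e^{-\frac{1}{n-1}t})$, and since $|\hat g_{ij}+\varphi_i\varphi_j|_{\hat g}$ is uniformly bounded, the first term contributes $O(e^{-\frac{1}{n-1}t})$ to $|\varphi_{ij}|_{\hat g}$. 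For the second term, $v\l_\k=O(e^{\frac{1}{n-1}t})$ and, crucially, Proposition \ref{prop4.10} gives $|h_j^k-\delta_j^k|\le O(t^2e^{-\frac{2}{n-1}t})$, so this term contributes $O(t^2e^{-\frac{1}{n-1}t})$. Taking the larger of the two yields $|\nabla_{\hat g}^2\varphi|_{\hat g}\le O(t^2e^{-\frac{1}{n-1}t})$, as desired.

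There is no serious obstacle here: the proof is essentially a bookkeeping argument that converts the intrinsic estimate on $h_i^j$ into an extrinsic estimate on the graph function $\varphi$. The only point requiring care is that $v\l_\k$ grows like $e^{\frac{1}{n-1}t}$, which eats one factor of the exponential decay coming from Proposition \ref{prop4.10}; this is precisely why the stronger rate $e^{-\frac{2}{n-1}t}$ (rather than $e^{-\frac{1}{n-1}t}$) in the pinching estimate for $h_i^j$ is needed, and also why one cannot directly improve the exponent beyond $-\frac{1}{n-1}t$ without sharper information on $\l'_\k-v\l_\k$.
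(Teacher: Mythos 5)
Your proof is correct and follows essentially the same route as the paper: invert the second fundamental form relation \eqref{g,h} to write $\nabla_{\hat g}^2\varphi$ in terms of $h_i^j$, then feed in the pinching estimate $|h_i^j-\delta_i^j|=O(t^2e^{-\frac{2}{n-1}t})$ from Proposition \ref{prop4.10} together with the $C^0$, $C^1$ asymptotics $\l'_\k-\l_\k=O(\l_\k^{-1})$, $v-1=O(e^{-\frac{2}{n-1}t})$, and $\l_\k=O(e^{\frac{1}{n-1}t})$. Your explicit splitting of $\varphi_{ij}$ into the $(\l'_\k-v\l_\k)$ piece and the $v\l_\k(h_j^k-\delta_j^k)$ piece is just a slightly more pedestrian form of the paper's single bound on $|h_{ij}-\frac{\l'_\k}{\l_\k v}g_{ij}|$, and the bookkeeping matches.
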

\begin{proof}
Using Proposition \ref{C0} and \ref{C1'}, we get
\beq\label{eq4_C}
\l_{\k}'=\l_k+O(e^{-\frac{t}{n-1}}),\quad
\frac 1v=O(e^{-\frac{2t}{n-1}}).
\eeq
It follows from Proposition \ref{C2phi} that
\begin{eqnarray}
|h_{ij}-\frac{\l'_\k}{\l_\k v}g_{ij}|_g\leq |h_{ij}-g_{ij}|_g+(n-1)|\frac{\l'_\k}{\l_\k v}-1|\leq O(t^2e^{-\frac{2}{n-1}t}).
\end{eqnarray}
On the other hand, $$g_{ij}=\l_\k^2\hat g_{ij}+\varphi_i\varphi_j=O(e^{\frac{2}{n-1}t})\hat g_{ij}.$$
Thus from  \eqref{g,h} we see
\begin{eqnarray}
|\varphi_{ij}|_{\hat g}= \frac{\l_\k}{v} |h_{ij}-\frac{\l'_\k}{\l_\k v}g_{ij}|_{\hat g}\leq O(t^2e^{-\frac{1}{n-1}t}).
\end{eqnarray}

\end{proof}

If we do more delicate analysis, we may improve the estimates given in 
Proposition \ref{C2phi}
to $o(e^{-\frac 1{n-1} t})$
as in the work of Gerhardt for the inverse mean curvature  flow in $\H^n$.
  Here we avoid to do so, as in the work of Brendle-Hung-Wang \cite{BHW}.
We remark that on a general asymptotically hyperbolic manifolds such estimates may be difficult to
obtain, cf.  the work of Neves \cite{Neves}.  See Remark \ref{rem5.6} below.

\section{Minkowski type inequalities}

We start this section with
\begin{theo}[\cite{BHW}]\label{thm4.1}
Let $\S$ be a compact embedded hypersurface which is star-shaped with positive mean curvature in $(\rho_{\k,m},\infty)\times N^{n-1}$. Let $\Omega$ be the region bounded by $\S$ and the horizon $\p M=\{\rho_{\k,m}\}\times N$. Then
\begin{eqnarray}\label{AF1}
&&\int_\S V_{\k,m} H d\mu\geq  n(n-1) \int_{\Omega}  V_{\k,m} d vol+ (n-1)\k\vartheta_{n-1}\left(\left(\frac{|\S|}{\vartheta_{n-1}}\right)^{\frac{n-2}{n-1}}-\left(\frac{|\p M|}{\vartheta_{n-1}}\right)^{\frac{n-2}{n-1}}\right).
\end{eqnarray}
Equality holds if and only if $\S=\{\rho\}\times N$ for some $\rho\in[\rho_{\k,m},\infty)$.
\end{theo}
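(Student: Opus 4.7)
The plan is to combine the long-time existence and asymptotic estimates for the inverse mean curvature flow (IMCF) developed in Section 4 with a Brendle--Hung--Wang style monotone quantity. Let $\Sigma_t$ denote the IMCF solution of \eqref{flow} starting at $\Sigma_0 = \Sigma$ and let $\Omega_t$ be the region bounded by $\Sigma_t$ and the horizon $\partial P_{\kappa,m}$. Introduce
\[
Q(t) := \int_{\Sigma_t} V_{\kappa,m} H \, d\mu - n(n-1)\int_{\Omega_t} V_{\kappa,m} \, dV - (n-1)\kappa\,\vartheta_{n-1}^{\frac{1}{n-1}}|\Sigma_t|^{\frac{n-2}{n-1}}.
\]
A direct evaluation on a slice $\{\rho\}\times N$, using $H = (n-1)V_{\kappa,m}/\rho$, $|\{\rho\}\times N| = \vartheta_{n-1}\rho^{n-1}$, together with the horizon identity $2m = \rho_{\kappa,m}^{n} + \kappa\,\rho_{\kappa,m}^{n-2}$, reveals that $Q$ is constant along the one-parameter family of slices, with value $-(n-1)\kappa\,\vartheta_{n-1}^{\frac{1}{n-1}}|\partial P_{\kappa,m}|^{\frac{n-2}{n-1}}$. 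Consequently, \eqref{AF1} is equivalent to $Q(0) \geq \lim_{t\to\infty} Q(t)$, and it suffices to prove monotonicity $\frac{dQ}{dt}\leq 0$ along IMCF together with convergence of the limit to the slice value.

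For the monotonicity, differentiate $Q$ using Lemma \ref{evolv.lemm}. Part (4) combined with the coarea formula gives $\frac{d}{dt}\int_{\Omega_t} V_{\kappa,m}\,dV = \int_{\Sigma_t} V_{\kappa,m}/H\,d\mu$, while part (1) handles the area term. For the first term, evolve $V_{\kappa,m}H$ using parts (3) and (4) and integrate the resulting $\Delta H/H^{2}$ piece twice by parts, invoking the static identity
\[
\bar\nabla^2 V_{\kappa,m} = V_{\kappa,m}\bigl(Ric_{g_{\kappa,m}} + n\,g_{\kappa,m}\bigr),
\]
which follows from \eqref{static} after tracing (yielding $\bar\Delta V_{\kappa,m} = n V_{\kappa,m}$). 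After the dust settles, $\frac{dQ}{dt}$ should decompose into two manifestly nonpositive contributions: a Newton--MacLaurin piece proportional to $-\int_{\Sigma_t} V_{\kappa,m}(|A|^2 - H^2/(n-1))/H\,d\mu \leq 0$, which uses the strict mean convexity $H > 0$ propagated by Propositions \ref{estH} and \ref{est2H}, and a mass-correction piece arising from the anisotropic part $-n(n-2)m\,\lambda_{\kappa}^{-n}dr^2$ of the Ricci tensor recorded in Section 2, whose sign is controlled by $\langle\partial_r,\nu\rangle > 0$ on star-shaped hypersurfaces.

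The limit $t \to \infty$ is handled by feeding the refined estimates of Propositions \ref{prop4.10} and \ref{C2phi} into each of the three integrals defining $Q$. These estimates pin $\Sigma_t$ exponentially close to the slice at radius $\rho(t) \sim e^{t/(n-1)}$, so each integrand may be replaced by its value on that slice modulo errors which vanish as $t\to\infty$; the slice computation already performed then produces exactly $-(n-1)\kappa\,\vartheta_{n-1}^{\frac{1}{n-1}}|\partial P_{\kappa,m}|^{\frac{n-2}{n-1}}$ and the inequality is proved. For the rigidity clause, equality along the flow forces the Newton--MacLaurin term to saturate, so $\Sigma_t$ must be totally umbilical for all $t\geq 0$; combined with the warped product structure of $(P_{\kappa,m}, g_{\kappa,m})$, standard arguments then identify $\Sigma_0$ as a coordinate slice.

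The main obstacle is the monotonicity calculation when $m\neq 0$. In the locally hyperbolic case $m=0$, the background has constant sectional curvature $-1$, $Ric = -(n-1)g$, and the cross terms cancel routinely. For general $m \in [m_c, \infty)$, however, $Ric(\bar g)$ picks up both a bulk correction $(n-2)m\,\lambda_\kappa^{-n}\bar g$ and the anisotropic piece $-n(n-2)m\,\lambda_\kappa^{-n}dr^2$; these must recombine with the $(n-2)m\,\lambda_\kappa^{1-n}$ term from $\lambda''_\kappa$, which enters through the support function $p = \langle \bar\nabla V_{\kappa,m},\nu\rangle$ in part (6) of Lemma \ref{evolv.lemm}, into a single algebraic identity of favorable sign, an identity which ultimately rests on \eqref{static}. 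Establishing this recombination, and checking that no obstruction arises from the boundary contribution at $\partial P_{\kappa,m}$ in the iterated integration by parts, constitutes the technical heart of the argument and is precisely the mechanism exploited by Brendle--Hung--Wang in the AdS--Schwarzschild setting on which the present Kottler argument is modeled.
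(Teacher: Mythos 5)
The framework (inverse mean curvature flow plus a monotone quantity, evaluated in the limit) is the right one, but the quantity $Q(t)$ you chose is \emph{not} monotone non-increasing, and the unnormalized limit cannot be evaluated with the estimates available in Section 4. Write $A(t) = \int_{\Sigma_t}V_{\kappa,m}H\,d\mu - n(n-1)\int_{\Omega_t}V_{\kappa,m}\,dV$ and $B = (n-1)\kappa\vartheta_{n-1}\rho_{\kappa,m}^{n-2}$. The actual computation — using the evolution of $V_{\kappa,m}H$ with $|A|^2\ge H^2/(n-1)$, the static identity, the divergence theorem identity $\int_{\Sigma_t}p\,d\mu = n\int_{\Omega_t}V_{\kappa,m}\,dV + (\tfrac n2\rho_0^n + \tfrac{n-2}{2}\kappa\rho_0^{n-2})\vartheta_{n-1}$, and Brendle's Heintze--Karcher type inequality $(n-1)\int_{\Sigma_t}V_{\kappa,m}H^{-1}\,d\mu \ge n\int_{\Omega_t}V_{\kappa,m}\,dV + \rho_0^n\vartheta_{n-1}$ — yields $A'(t) \le \tfrac{n-2}{n-1}\,(A(t)+B)$. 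Since $|\Sigma_t|^{(n-2)/(n-1)}$ grows at exactly the rate $\tfrac{n-2}{n-1}$, the quantity that is monotone is the \emph{normalized} $Q_1(t)=(A(t)+B)/|\Sigma_t|^{(n-2)/(n-1)}$; your unnormalized $Q(t)=A(t)-(n-1)\kappa\vartheta_{n-1}^{1/(n-1)}|\Sigma_t|^{(n-2)/(n-1)}$ only satisfies $Q'(t) \le \tfrac{n-2}{n-1}(Q(t)+B)$, which permits $Q$ to increase precisely when $Q(t)+B\ge 0$, i.e.\ in the regime you are trying to prove holds. Your claimed decomposition of $Q'$ into two nonpositive pieces therefore does not exist, and I also note that the "anisotropic Ricci" piece you describe is illusory: the static equation makes the ambient curvature terms cancel identically, and what is actually left is $\int_{\Sigma_t}(2p + \tfrac{n-2}{n-1}V_{\kappa,m}H)\,d\mu$ minus the volume term, which is closed only by invoking Brendle's Heintze--Karcher inequality — a separate theorem, not an algebraic recombination.

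The limit step has a second gap. To evaluate $\lim_{t\to\infty}Q(t)$ directly one would need asymptotics far sharper than Propositions 4.10 and 4.11 deliver: each of the three integrals in $Q(t)$ grows like $e^{nt/(n-1)}$, and the allowed errors $O(t^2 e^{-2t/(n-1)})$ in $H$ and $h_i^j$ translate into errors of order $O(t^2 e^{(n-2)t/(n-1)})$ in $\int_{\Sigma_t}V_{\kappa,m}H\,d\mu$, which diverges. The paper explicitly avoids this route (see Remark~5.6); it proves instead $\liminf_{t\to\infty}Q_1(t)\ge(n-1)\kappa\vartheta_{n-1}^{1/(n-1)}$, which divides the error by $|\Sigma_t|^{(n-2)/(n-1)}\sim e^{(n-2)t/(n-1)}$, rendering it harmless, and then closes the limit with Beckner's sharp Sobolev inequality on $N$ (for $\kappa=1$), the H\"older inequality (for $\kappa=-1$), or trivially ($\kappa=0$). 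Your proposal never mentions this final inequality, which is needed to identify the limiting value even in the normalized form. Passing to the normalized functional $Q_1$ and supplying the Heintze--Karcher and Beckner inputs would repair the argument and reduce it to the paper's proof.
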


When $\k=1$, Theorem \ref{thm4.1} was proved in \cite{BHW}; when $\k=0,-1$, the proof follows from a similar argument,  which is even simpler. For the convenience of the reader,  we include it here.  To prove this theorem, we need the following two lemmas.
\begin{lemm}\label{lemm4.2}
The functional
\begin{eqnarray}
Q_1(t):=\frac{\int_{\S_t} V_{\k,m} H d\mu-n(n-1)\int_{\Omega_t}  V_{\k,m} d vol+(n-1)\k \rho_{\k,m}^{n-2}\vartheta_{n-1}}{|\S_t|^{\frac{n-2}{n-1}}}
\end{eqnarray}
is monotone non-increasing along flow (\ref{flow}).
\end{lemm}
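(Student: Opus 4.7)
The strategy is to differentiate $Q_1(t)$ along the flow and reduce the required monotonicity to the combination of the elementary inequality $|A|^2\ge H^2/(n-1)$ and a Heintze--Karcher type inequality in the Kottler--Schwarzschild space $P_{\k,m}$. Write $N(t)$ for the numerator of $Q_1(t)$. Since $\frac{d}{dt}|\S_t|=|\S_t|$ by Lemma \ref{evolv.lemm}, the monotonicity $Q_1'(t)\le 0$ is equivalent to
\begin{equation*}
N'(t)\le \frac{n-2}{n-1}N(t).
\end{equation*}

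The first step is to differentiate $\int_{\S_t}V_{\k,m}H\,d\mu$ using the evolution equations in Lemma \ref{evolv.lemm}. The term $V_{\k,m}(\Delta H/H^2-2|\nabla H|^2/H^3)$ is converted, via double integration by parts, into $-\int_{\S_t}\Delta_{\S_t}V_{\k,m}/H\,d\mu$. Substituting the relation $\Delta_{\S_t}V_{\k,m}=\bar\Delta V_{\k,m}-\bar\nabla^2 V_{\k,m}(\nu,\nu)-Hp$ together with the static equation \eqref{static}, which yields $\bar\Delta V_{\k,m}=nV_{\k,m}$ and hence $\bar\nabla^2 V_{\k,m}(\nu,\nu)=nV_{\k,m}+V_{\k,m}Ric(\nu,\nu)$, makes the Ricci contributions cancel, leading to
\begin{equation*}
\frac{d}{dt}\int_{\S_t}V_{\k,m}H\,d\mu = 2\int_{\S_t}p\,d\mu - \int_{\S_t}\frac{V_{\k,m}|A|^2}{H}\,d\mu + \int_{\S_t}V_{\k,m}H\,d\mu.
\end{equation*}
Combined with $\frac{d}{dt}\int_{\Omega_t}V_{\k,m}\,dvol=\int_{\S_t}V_{\k,m}/H\,d\mu$ and the divergence identity
\begin{equation*}
\int_{\S_t}p\,d\mu = n\int_{\Omega_t}V_{\k,m}\,dvol + \l''_\k(0)\rho_{\k,m}^{n-1}\vartheta_{n-1}
\end{equation*}
(which follows by applying Green's formula to $\bar\Delta V_{\k,m}=nV_{\k,m}$ on $\Omega_t$, using that $V_{\k,m}$ vanishes on the horizon), this furnishes an explicit expression for $N'(t)-\frac{n-2}{n-1}N(t)$.

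The second step is to insert the inequality $|A|^2\ge H^2/(n-1)$: a remarkable cancellation then occurs between the $\int_{\S_t}V_{\k,m}H\,d\mu$ terms. Using moreover the identity $2\l''_\k(0)\rho_{\k,m}^{n-1}-(n-2)\k\rho_{\k,m}^{n-2}=n\rho_{\k,m}^n$, which follows from the defining relation $2m=\rho_{\k,m}^n+\k\rho_{\k,m}^{n-2}$ together with $\l''_\k(r)=\l_\k(r)+(n-2)m\l_\k(r)^{1-n}$, the required inequality $N'(t)-\frac{n-2}{n-1}N(t)\le 0$ reduces to
\begin{equation*}
(n-1)\int_{\S_t}\frac{V_{\k,m}}{H}\,d\mu \ge n\int_{\Omega_t}V_{\k,m}\,dvol + \rho_{\k,m}^n\vartheta_{n-1}.
\end{equation*}
This is precisely the Heintze--Karcher type inequality of Brendle in the Kottler--Schwarzschild setting, and constitutes the principal obstacle of the proof; it can be established by applying a weighted Reilly formula on $\Omega_t$ with the static potential $V_{\k,m}$, in analogy with the $\k=1$ case treated in \cite{BHW}, using that $V_{\k,m}$ vanishes on the totally geodesic horizon $\partial P_{\k,m}$. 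One checks directly that slices $\{\rho\}\times N$ saturate both the curvature inequality and the Heintze--Karcher inequality, which yields the rigidity statement.
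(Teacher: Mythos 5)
Your proposal is correct and follows essentially the same route as the paper: differentiate the numerator using the evolution equations and the static equation to cancel the Ricci terms, substitute $|A|^2\ge H^2/(n-1)$, express $\int_{\S_t}p\,d\mu$ via the divergence theorem (your constant $\l''_\k(0)\rho_{\k,m}^{n-1}\vartheta_{n-1}$ agrees with the paper's $((n-2)m+\rho_{\k,m}^n)\vartheta_{n-1}$ via \eqref{l''}), and close the estimate with Brendle's Heintze--Karcher inequality \eqref{eq_3_thm4.1}. The only cosmetic difference is that you cite the Heintze--Karcher inequality as something to be re-derived from a weighted Reilly formula, whereas the paper simply invokes \cite{Brendle}; either way the reduction and the algebra are identical.
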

\begin{proof}
The proof of this lemma can be  found in \cite{BHW}.  For completeness, we include the calculations here. To simplify the notation,  we denote $\rho_0=\rho_{\k,m}$.
In view of Lemma \ref{evolv.lemm} and integrating by parts, we calculate
\begin{eqnarray}\label{eq_1_thm4.1}
&&\frac{d}{dt} \int_{\S_t} V_{\k,m} H d\mu\nonumber\\
&=&-\int_{\Sigma_t}\frac 1H\Delta V_{\k,m}d\mu-\int_{\Sigma_t}\frac{V_{\k,m}}{H}(|A|^2+Ric(\nu,\nu))d\mu+\int_{\Sigma_t}(p+V_{\k,m}H)d\mu\nonumber\\
&=&-\int_{\Sigma_t}\frac{V_{\k,m}}{H}|A|^2+\int_{\Sigma_t}(2p+V_{\k,m}H)d\mu\nonumber\\
&\leq &\int_{\S_t}\left(2p+\frac{n-2}{n-1}V_{\k,m} H\right) d\mu,
\end{eqnarray}
where in the third line we used the simple fact $\Delta V_{\k,m}=\overline\Delta V_{\k,m} -\overline\nabla^2V_{\k,m}(\nu,\nu)-Hp$ and (\ref{static}).

Then we use the divergence theorem to deal with the first term that
\begin{eqnarray}\label{eq_2_thm4.1}
\int_{\S_t}p d\mu&=&\int_{\S_t}\langle\overline\nabla V_{\k,m},\nu\rangle d\mu\nonumber\\
&=&\int_{\Omega_t}\bar{\Delta} V_{\k,m}dvol+((n-2)m+\rho_{0}^n)\vartheta_{n-1}\nonumber\\
&=&n\int_{\Omega_t}V_{\k,m}dvol+\left(\frac n2\rho_{0}^n+\frac{n-2}{2}\k\rho_0^{n-2}\right)\vartheta_{n-1},
\end{eqnarray}
where in the last equality we used the relation
$2m=\rho_0^n+\k\rho_0^{n-2}$
 and the fact $\bar\Delta V_{\k,m}=nV_{\k,m}$ which follows from (\ref{static}).

Similarly, by Lemma \ref{evolv.lemm} and (\ref{eq_2_thm4.1}), we have
\beq\label{eq_1'_thm4.1}
\frac{d}{dt}\int_{\Omega_t}  n V_{\k,m}dvol=n\int_{\Sigma_t}\frac{V_{\k,m}}{H}d\mu.
\eeq

 Also a  Heintze-Karcher type
inequality proved by Brendle \cite{Brendle} is needed to estimate the third term, that is,
\beq\label{eq_3_thm4.1}
(n-1)\int_{\S_t}  \frac{V_{\k,m}}{H}d\mu\geq n\int_{\Omega_t}V_{\k,m}dvol+\rho_{0}^n\vartheta_{n-1}.
\eeq
Hence substituting (\ref{eq_2_thm4.1}), (\ref{eq_3_thm4.1}) into (\ref{eq_1_thm4.1}) together with (\ref{eq_1'_thm4.1}), we infer
\begin{eqnarray*}
&&\frac{d}{dt}\left(\int_{\S_t} V_{\k,m} H d\mu-n(n-1)\int_{\Omega_t}  V_{\k,m} d vol\right)\\
&\leq &\int_{\Omega_t} 2n V_{\k,m} dvol+(n\rho_{0}^n+(n-2)\k\rho_0^{n-2})\vartheta_{n-1}\\
&&+\int_{\S_t} \frac{n-2}{n-1}V_{\k,m} H d\mu-\left(n^2\int_{\Omega_t}  V_{\k,m} dvol+n\rho_{0}^n\vartheta_{n-1}\right)\\
&=&\frac{n-2}{n-1}\left(\int_{\S_t} V_{\k,m} H d\mu-n(n-1)\int_{\Omega_t}  V_{\k,m} d vol+(n-1)\k \rho_{0}^{n-2}\vartheta_{n-1}\right).
\end{eqnarray*}
Taking into account of  Lemma \ref{evolv.lemm} (1), we get the assertion.
\end{proof}

\begin{lemm}\label{lemm4.3}
$${\lim \inf}_{t\rightarrow\infty}Q_1(t)\geq (n-1)\k\vartheta_{n-1}^{\frac 1{n-1}}.$$
\end{lemm}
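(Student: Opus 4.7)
My plan is to rewrite $\mathcal{N}(t)$, the numerator of $Q_1(t)$, as an explicit integral over $N$ admitting a Hölder-type bound, modulo an error that is negligible compared to the denominator. First, using \eqref{eq_2_thm4.1} to trade the bulk integral against $\int_{\Sigma_t}p\,d\mu$, and using $2m=\rho_{\kappa,m}^n+\kappa\rho_{\kappa,m}^{n-2}$, I reduce to
\[\mathcal{N}(t) = \int_{\Sigma_t}\bigl(V_{\kappa,m}H-(n-1)p\bigr)\,d\mu + n(n-1)m\vartheta_{n-1}.\]
On the graph $\Sigma_t=\{(u(x),x):x\in N\}$ one has $\langle\partial_r,\nu\rangle=1/v$, so $p=\lambda''_\kappa/v$; combining with formula \eqref{meancurv.} for $H$, the area element $d\mu=\lambda_\kappa^{n-1}v\,d\mu_{\hat g}$, and the Kottler identity $(\lambda'_\kappa)^2-\lambda_\kappa\lambda''_\kappa=\kappa-nm\lambda_\kappa^{2-n}$ (a direct consequence of \eqref{l''}), one obtains the pointwise formula
\[(V_{\kappa,m}H-(n-1)p)\,d\mu = \lambda_\kappa^{n-2}\bigl[(n-1)(\kappa-nm\lambda_\kappa^{2-n})-\lambda'_\kappa\,\tilde g^{ij}\varphi_{ij}\bigr]\,d\mu_{\hat g}.\]
Integrating over $N$, the $m$-contributions cancel exactly against $n(n-1)m\vartheta_{n-1}$, leaving
\[\mathcal{N}(t) = (n-1)\kappa\int_N\lambda_\kappa^{n-2}\,d\mu_{\hat g} \;-\; \int_N\lambda_\kappa^{n-2}\lambda'_\kappa\,\tilde g^{ij}\varphi_{ij}\,d\mu_{\hat g}.\]

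The second step is to show that the $\varphi_{ij}$ piece contributes non-negatively up to an error that is negligible at infinity. I split $\tilde g^{ij}\varphi_{ij}=\hat\Delta\varphi-v^{-2}\varphi^i\varphi^j\varphi_{ij}$ and integrate by parts on $N$; since $\lambda_\kappa^{n-2}\lambda'_\kappa$ depends on $x$ only through $u(x)$ and $\nabla^{\hat g}u=\lambda_\kappa\nabla^{\hat g}\varphi$, I obtain
\[-\int_N\lambda_\kappa^{n-2}\lambda'_\kappa\,\hat\Delta\varphi\,d\mu_{\hat g} = \int_N\bigl[(n-2)\lambda_\kappa^{n-2}(\lambda'_\kappa)^2+\lambda_\kappa^{n-1}\lambda''_\kappa\bigr]|\nabla^{\hat g}\varphi|_{\hat g}^2\,d\mu_{\hat g}\;\geq\;0,\]
using $\lambda'_\kappa,\lambda''_\kappa\geq 0$. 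The remaining cubic term $E(t):=\int_N\lambda_\kappa^{n-2}\lambda'_\kappa v^{-2}\varphi^i\varphi^j\varphi_{ij}\,d\mu_{\hat g}$ is controlled by Propositions \ref{C0}, \ref{C1'} and \ref{C2phi}: combining $\lambda_\kappa=\Theta(e^{t/(n-1)})$, $|\nabla^{\hat g}\varphi|_{\hat g}=O(e^{-t/(n-1)})$ and $|\nabla_{\hat g}^2\varphi|_{\hat g}=O(t^2 e^{-t/(n-1)})$ gives $|E(t)|=O(t^2e^{(n-4)t/(n-1)})=o(e^{(n-2)t/(n-1)})=o(|\Sigma_t|^{(n-2)/(n-1)})$.

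Finally, I drop the non-negative IBP contribution and apply Hölder's inequality on $(N,\hat g)$ with conjugate exponents $\frac{n-1}{n-2}$ and $n-1$:
\[\int_N\lambda_\kappa^{n-2}\,d\mu_{\hat g}\;\leq\;\vartheta_{n-1}^{1/(n-1)}\Bigl(\int_N\lambda_\kappa^{n-1}\,d\mu_{\hat g}\Bigr)^{(n-2)/(n-1)}\;\leq\;\vartheta_{n-1}^{1/(n-1)}|\Sigma_t|^{(n-2)/(n-1)},\]
where the last inequality uses $v\geq 1$. Since we are in the range $\kappa\in\{0,-1\}$, multiplication by $(n-1)\kappa\leq 0$ reverses the inequality, so
\[\mathcal{N}(t)\;\geq\;(n-1)\kappa\,\vartheta_{n-1}^{1/(n-1)}|\Sigma_t|^{(n-2)/(n-1)} + E(t).\]
Dividing by $|\Sigma_t|^{(n-2)/(n-1)}$ and sending $t\to\infty$ yields $\liminf_{t\to\infty}Q_1(t)\geq(n-1)\kappa\vartheta_{n-1}^{1/(n-1)}$, as claimed. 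The most delicate point is the control of $E(t)$, which relies on the sharpened $C^2$-estimate of Proposition \ref{C2phi}, itself built on the improved convergence rates of Proposition \ref{prop4.10} obtained by transplanting the Brendle--Hung--Wang argument to the Kottler space.
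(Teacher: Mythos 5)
Your proof is correct and follows the same overall strategy as the paper — reduce to an inequality for integrals over $N$ via \eqref{eq_2_thm4.1}, then close with a H\"older-type bound — but your bookkeeping is genuinely cleaner in two respects. First, you exploit the exact pointwise identity
\[
(V_{\kappa,m}H-(n-1)p)\,d\mu=\lambda_\kappa^{n-2}\bigl[(n-1)(\kappa-nm\lambda_\kappa^{2-n})-\lambda'_\kappa\,\tilde g^{ij}\varphi_{ij}\bigr]\,d\mu_{\hat g},
\]
which makes the $m$-terms cancel on the nose against $n(n-1)m\vartheta_{n-1}$; the paper instead performs an asymptotic expansion of $V_{\kappa,m}$, $H$, $p$, $\sqrt{\det g}$ term by term, tracking $O(e^{(n-3)t/(n-1)})$ errors, and also invokes Cauchy--Schwarz $\langle\bar\nabla V_{\kappa,m},\nu\rangle\le|\bar\nabla V_{\kappa,m}|$ in \eqref{eq6_C}, whereas your exact $p=\lambda''_\kappa/v$ and $d\mu=\lambda_\kappa^{n-1}v\,d\mu_{\hat g}$ avoid it. Second, after integrating by parts you get an \emph{exactly} non-negative quadratic piece $\int_N[(n-2)\lambda_\kappa^{n-2}(\lambda'_\kappa)^2+\lambda_\kappa^{n-1}\lambda''_\kappa]|\nabla^{\hat g}\varphi|^2$ (using $\lambda''_\kappa\ge 0$), and the only genuinely asymptotic step is the control of the cubic remainder $E(t)$, for which your estimate $|E(t)|=O(t^2e^{(n-4)t/(n-1)})=o(|\Sigma_t|^{(n-2)/(n-1)})$ via Propositions~\ref{C0}, \ref{C1'}, \ref{C2phi} is correct. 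The one thing to flag is that you argue only for $\kappa\in\{0,-1\}$ (where $(n-1)\kappa\le0$ lets you drop the IBP term and use H\"older directly), whereas the paper's statement of Lemma~\ref{lemm4.3} also spells out the $\kappa=1$ case, reducing there to Beckner's inequality (Lemma~\ref{Beckner}) rather than H\"older. Since Section~4 of the paper derives the flow estimates only for $\kappa=0,-1$ and defers $\kappa=1$ to \cite{BHW}, your restriction is consistent with the paper's own framework, but it is worth stating it explicitly, as you do; to cover $\kappa=1$ in your format you would need to retain the IBP contribution and invoke the Beckner-type inequality on $N$ rather than discard it.
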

\begin{proof}
%We need to prove that the limit of this quantity is $(n-1)\k\vartheta_{n-1}.$
In view of (\ref{eq_2_thm4.1}), it suffices to prove
\beq\label{aim0}
{\lim\inf}_{t\rightarrow\infty}\frac{\int_{\Sigma_t}V_{\k,m}Hd\mu-(n-1)\int_{\Sigma_t}pd\mu}{|\Sigma_t|^{\frac{n-2}{n-1}}}\geq(n-1)\k\vartheta_{n-1}^{\frac 1{n-1}}.
\eeq
>From (\ref{meancurv.}), Proposition \ref{C1'} and \ref{C2phi}, we have
\beq
H=\frac{1}{v}\left((n-1)\frac{\l'_{\k}}{\l_{\k}}-\frac{1}{\l_{\k}}\Delta_{\hat g}\varphi\right)+O(t^2e^{-\frac{3t}{n-1}}).
\eeq
Using Proposition \ref{C1'} and the expressions  of $\l_{\k},\l_{\k}',$ and $v$, we get
\beq\label{eq4_C}
V_{\k,m}=\l_{\k}'=\l_k\left(1+\frac{\k}{2}(\l_{\k})^{-2}\right)+O(e^{-\frac{4t}{n-1}}),\quad
\frac 1v=1-\frac12|\nabla_{\hat g} \varphi|^2_{\hat g}+O(e^{-\frac{4t}{n-1}})
\eeq
and
\beq\label{eq5_C}
\sqrt{\det g}=\left(\l_{\k}^{n-1}+\frac 12|\nabla_{\hat g}\varphi|_{\hat g}^2\l_{\k}^{n-1}+O(e^{\frac{n-5}{n-1}t})\right)\sqrt{\det\hat g}.
\eeq
Hence we have
\begin{eqnarray}\label{VH}
\int_{\Sigma_t}V_{\k,m}Hd\mu&=&(n-1)\int_{N}(\l_{\k}^n+\k\l_{\k}^{n-2})d\mu_{\hat g}-\int_{N}\l_{\k}^{n-1}\Delta_{\hat g}\varphi d\mu_{\hat g}+O(e^{\frac{n-3}{n-1}t})\nonumber\\
&=&(n-1)\int_{N}(\l_{\k}^n+\k\l_{\k}^{n-2})d\mu_{\hat g}+\int_{N}(n-1)\l_k^{n-4}|\nabla_{\hat g}\l_{\k}|^2d\mu_{\hat g}+O(e^{\frac{n-3}{n-1}t}),
\end{eqnarray}
where in the second line, we have integrated by parts and used the fact
\beq\label{relation0}
|\nabla_{\hat g}\l_{\k}-\l_{\k}^2\nabla_{\hat g}\varphi|_{\hat g}=|\l_k-\l_k'||\nabla_{\hat g} u|_{\hat g}=O(e^{-\frac{t}{n-1}}).
\eeq
Meanwhile, we infer from (\ref{l''}), (\ref{eq4_C}), (\ref{eq5_C}) and (\ref{relation0}) that
\begin{eqnarray}\label{eq6_C}
-\int_{\Sigma_t}pd\mu&=&\int_{\Sigma_t}(V_{\k,m}-\langle\bar\nabla V_{\k,m},\nu\rangle)d\mu-\int_{\Sigma_t}V_{\k,m}d\mu\nonumber\\
&\geq&\int_{\Sigma_t}(V_{\k,m}-|\bar\nabla V_{\k,m}|)d\mu-\int_{\Sigma_t}V_{\k,m}d\mu\nonumber\\
&=&\frac{\k}{2}\int_{N}\l_{\k}^{n-2}d\mu_{\hat g}-\int_{N}\l_{\k}^{n}(1+\frac 12\k\l_{\k}^{-2}+\frac 12\l_{\k}^{-4}|\nabla\l_{\k}|^2)d\mu_{\hat g}+O(e^{\frac{n-3}{n-1}t})\nonumber\\
&=&-\int_{N}\l_{\k}^{n}(1+\frac 12\l_{\k}^{-4}|\nabla\l_{\k}|^2)d\mu_{\hat g}+O(e^{\frac{n-3}{n-1}t})
\end{eqnarray}
 (\ref{VH}) and (\ref{eq6_C}) imply that (\ref{aim0}) is reduced to prove
\beq\label{aim'}
(n-1)\k\int_{N} \l_{\k}^{n-2}+\frac{n-1}{2}\int_{N}\l_{\k}^{n-4}|\nabla \l_{\k}|^2
\geq(n-1)\k {\vartheta_{n-1}}^{\frac 1{n-1}}\left( \int_{N} \l_{\k}^{n-1}\right)^{\frac{n-2}{n-1}}.
\eeq
When $\k=1$, it was already observed in \cite{BHW} that \eqref{aim'} is a non-sharp version of Beckner's Sobolev type inequality, Lemma \ref{Beckner}.  When $\k=-1$, by the H\"{o}lder inequality, we have $$\int_{N} \l_{\k}^{n-2}\leq{\vartheta_{n-1}}^{\frac 1{n-1}}\left( \int_{N} \l_{\k}^{n-1}\right)^{\frac{n-2}{n-1}},$$ which implies
(\ref{aim'}).  When $\k=0$,  \eqref{aim'} is trivial.
Hence we show (\ref{aim0}) and complete the proof.

\end{proof}

\begin{lemm}[\cite{Beckner}]\label{Beckner}
For every positive function $f$ on $\mathbb{S}^{n-1}$, we have
\begin{eqnarray*}
&&(n-1)\int_{\mathbb{S}^{n-1}} f^{n-2}dvol_{\mathbb{S}^{n-1}}+\frac{n-2}{2}\int_{\mathbb{S}^{n-1}} f^{n-4}|\nabla f|^2_{g_{\mathbb{S}^{n-1}}}dvol_{\mathbb{S}^{n-1}}\\
&\geq&(n-1)\omega_{n-1}^{\frac{1}{n-1}}\left(\int_{\mathbb{S}^{n-1}} f^{n-1}dvol_{\mathbb{S}^{n-1}}\right)^{\frac{n-2}{n-1}}.
\end{eqnarray*}
\begin{proof}
Theorem 4 in \cite{Beckner} gives that
\begin{eqnarray*}
&&(n-1)\int_{\mathbb{S}^{n-1}} w^{2}dvol_{\mathbb{S}^{n-1}}+\frac{2}{n-2}\int_{\mathbb{S}^{n-1}} |\nabla w|^2_{g_{\mathbb{S}^{n-1}}}dvol_{\mathbb{S}^{n-1}}\\
&\geq&(n-1)\omega_{n-1}^{\frac{1}{n-1}}\left(\int_{\mathbb{S}^{n-1}} w^{\frac{2(n-1)}{n-2}}dvol_{\mathbb{S}^{n-1}}\right)^{\frac{n-2}{n-1}}.
\end{eqnarray*}
for every positive smooth function $w$. Set $w=f^{\frac{n-2}{2}}$, one gets the desired result.
\end{proof}
\end{lemm}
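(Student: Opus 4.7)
The plan is to derive this inequality by a direct substitution into Beckner's sharp Sobolev-type inequality on the unit sphere. Beckner's original inequality (Theorem 4 of his Annals paper) states that for every positive smooth $w$ on $S^{n-1}$,
\[
(n-1)\int_{S^{n-1}} w^{2}\,dvol + \frac{2}{n-2}\int_{S^{n-1}} |\nabla w|^{2}\,dvol \geq (n-1)\omega_{n-1}^{\frac{1}{n-1}} \left(\int_{S^{n-1}} w^{\frac{2(n-1)}{n-2}}\,dvol\right)^{\frac{n-2}{n-1}}.
\]
The target inequality features $f^{n-2}$, $f^{n-4}|\nabla f|^{2}$, and $f^{n-1}$, so the natural ansatz is $w = f^{(n-2)/2}$, which should carry Beckner's version onto the stated one.

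First I would verify that the powers line up. With $w = f^{(n-2)/2}$ one has $w^{2} = f^{n-2}$ and $w^{2(n-1)/(n-2)} = f^{n-1}$, so the $L^{2}$ term and the right-hand side match immediately. For the gradient term, the chain rule gives $\nabla w = \tfrac{n-2}{2}\,f^{(n-4)/2}\,\nabla f$, whence $|\nabla w|^{2} = \tfrac{(n-2)^{2}}{4}\,f^{n-4}|\nabla f|^{2}$. Substituting, Beckner's prefactor $\tfrac{2}{n-2}$ collapses against $\tfrac{(n-2)^{2}}{4}$ to produce exactly the coefficient $\tfrac{n-2}{2}$ appearing in the claim.

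There is no genuine obstacle here: the substitution preserves positivity and smoothness whenever $f>0$ and $n \geq 3$, so Beckner's theorem applies verbatim, and the whole proof reduces to a one-line algebraic verification once the substitution has been chosen. All of the deep content is absorbed into Beckner's spectral proof of his sharp inequality on $S^{n-1}$, which we invoke as a black box. The only minor point to flag in the paper would be that the version cited here is weaker than the sharp Beckner statement (the constant $(n-1)\omega_{n-1}^{1/(n-1)}$ is not improved by the substitution), which is precisely why the lemma is called a ``non-sharp'' Sobolev inequality in its application to \eqref{aim'}.
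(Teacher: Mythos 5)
Your proposal is exactly the paper's proof: invoke Beckner's Theorem 4 and substitute $w=f^{(n-2)/2}$, after which the three terms match by the chain rule. The algebraic verification you carry out (the $\tfrac{2}{n-2}\cdot\tfrac{(n-2)^2}{4}=\tfrac{n-2}{2}$ collapse) is correct and is the entire content of the lemma.
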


\begin{rema} \label{rem0}
It is easy to see that the above inequality holds also on the space form $N$.
\end{rema}

\noindent{\it Proof of Theorem \ref{thm4.1}.} Note that $|\p M|=\rho_0^{n-1}\vartheta_{n-1}.$
The inequality \eqref{AF1} follows directly from Lemma \ref{lemm4.2} and Lemma \ref{lemm4.3}. When the equality holds, we have the equality  in  \eqref{eq_1_thm4.1}, which forces $|A|^2=\frac{1}{n-1}H^2$ and hence $\S$ is umbilic. When $m\neq 0$, an umbilic hypersurface must be a slice $\{\rho\}\times N$. When $m=0$, it follows from the equality case in \eqref{aim'} that $\l_\k$ is constant, which implies again $\S$ is a slice $\{\rho\}\times N$.
\qed

\

We now prove another version of Alexandrov-Fenchel inequalities,  which is applicable to prove Penrose inequalities.

\begin{theo}\label{thm5.4}
Let $\S$ be a compact embedded hypersurface which is star-shaped with positive mean curvature in $(\rho_0=\rho_{\k,m},\infty)\times N^{n-1}$. Let $\Omega$ be the region bounded by $\S$ and the horizon $\p M=\{\rho_{0}\}\times N$. Then
\begin{eqnarray*}
\int V_{\k,m}H  d\mu &\ge& (n-1)\kappa \vartheta_{n-1}\left(\left( \frac{|\Sigma|}{\vartheta_{n-1}}\right)^{\frac{n-2}{n-1}}-
\left(\frac{|\partial M|}{\vartheta_{n-1}}\right)^{\frac{n-2}{n-1}} \right) \\
&&+(n-1)\vartheta_{n-1}\left(\left( \frac{|\Sigma|}{\vartheta_{n-1}}\right)^{\frac{n}{n-1}}-
\left(\frac{|\partial M|}{\vartheta_{n-1}}\right)^{\frac{n}{n-1}} \right).
\end{eqnarray*}
Equality holds if and only if $\S=\{\rho\}\times N$ for some $\rho\in[\rho_{\k,m},\infty)$.
%$|\p M|= \rho_{0}^{n-1}\vartheta_{n-1}$, $2m=\k \rho_{0}^{n-2}+\rho_{0}^n$.
\end{theo}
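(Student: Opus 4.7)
The strategy is to apply the inverse mean curvature flow of Section~4 and derive a refined monotone functional by feeding Theorem~\ref{thm4.1} back into the IMCF machinery. Let $\{\Sigma_t\}_{t\ge 0}$ evolve from $\Sigma_0=\Sigma$ under IMCF; the estimates of Section~4 show that the flow exists for all $t$ and that $\Sigma_t$ stays star-shaped and strictly mean convex, so Theorem~\ref{thm4.1} applies at every time. Writing $I(t)=\int_{\Sigma_t}V_{\k,m}H\,d\mu$, the computation \eqref{eq_1_thm4.1} combined with the Newton inequality $|A|^2\ge H^2/(n-1)$ gives
\[
I'(t)\le \tfrac{n-2}{n-1}I(t)+2\int_{\Sigma_t}p\,d\mu.
\]

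The first key step is an upper bound on $\int_{\Sigma_t}p\,d\mu$ obtained from Theorem~\ref{thm4.1}. Eliminating $\int_{\Omega_t}V_{\k,m}\,dvol$ between Theorem~\ref{thm4.1} and the flux identity \eqref{eq_2_thm4.1} and using $2m=\rho_0^n+\kappa\rho_0^{n-2}$, one obtains after a short computation
\begin{equation}\label{key-bound-plan}
(n-1)\int_{\Sigma_t}p\,d\mu\le I(t)-(n-1)\kappa\vartheta_{n-1}\Big(\tfrac{|\Sigma_t|}{\vartheta_{n-1}}\Big)^{\frac{n-2}{n-1}}+n(n-1)m\vartheta_{n-1}.
\end{equation}
Plugging \eqref{key-bound-plan} into the differential inequality for $I$ and setting
\[
\tilde I(t):=I(t)-(n-1)\kappa\vartheta_{n-1}\bigl(|\Sigma_t|/\vartheta_{n-1}\bigr)^{(n-2)/(n-1)}+2(n-1)m\vartheta_{n-1},
\]
algebraic simplification collapses everything to $\tilde I'(t)\le \tfrac{n}{n-1}\tilde I(t)$. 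Since $|\Sigma_t|'=|\Sigma_t|$, this is equivalent to the non-increase of
\[
Q(t):=\frac{\tilde I(t)}{\vartheta_{n-1}\bigl(|\Sigma_t|/\vartheta_{n-1}\bigr)^{n/(n-1)}}
\]
along the IMCF.

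The second step is to show $\liminf_{t\to\infty}Q(t)\ge n-1$. Granted this, $Q(0)\ge n-1$ combined with the identity \eqref{m} applied to $\partial P_{\k,m}$ rearranges exactly to the statement of Theorem~\ref{thm5.4}. For the asymptotic limit, the sharp estimates of Propositions~\ref{C0}--\ref{C2phi} yield expansions analogous to \eqref{VH} and \eqref{eq6_C}, giving
\[
\tilde I(t)\sim (n-1)\!\int_N\!\l_\k(u)^n d\mu_{\hat g}+(n-1)\!\int_N\!\l_\k^{n-4}|\nabla_{\hat g}\l_\k|^2 d\mu_{\hat g}+(n-1)\kappa\Bigl(\!\int_N\!\l_\k^{n-2} d\mu_{\hat g}-\vartheta_{n-1}(|\Sigma_t|/\vartheta_{n-1})^{(n-2)/(n-1)}\Bigr),
\]
while $|\Sigma_t|=\int_N\l_\k^{n-1}\sqrt{1+|\nabla_{\hat g} u|^2/\l_\k^2}\,d\mu_{\hat g}$. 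The dominant comparison $\int_N\l_\k^n\,d\mu_{\hat g}\ge \vartheta_{n-1}\bigl(\int_N\l_\k^{n-1}/\vartheta_{n-1}\bigr)^{n/(n-1)}$ is H\"older, and for $\kappa\in\{0,-1\}$ the $\kappa$-bracket above is non-negative by the same H\"older applied to $\l_\k^{n-2}$ (this is where the sign assumption on $\kappa$ enters, mirroring the proof of Lemma~\ref{lemm4.3}). The remaining correction coming from replacing $\int_N\l_\k^{n-1}$ by $|\Sigma_t|$ is absorbed by the positive gradient term $(n-1)\int_N\l_\k^{n-4}|\nabla_{\hat g}\l_\k|^2$, since Taylor expansion produces the coefficient $\tfrac{n}{2(n-1)}<n-1$ for $n\ge 3$.

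For the equality case, equality at $t=0$ propagates along the flow and forces equality in the Newton inequality, so each $\Sigma_t$ is totally umbilic; in the Kottler space with $m\ne 0$ this forces $\Sigma$ to be a slice $\{\rho\}\times N$, while for $m=0$ the additional equality case in H\"older forces $\l_\k\circ u$ to be constant. The main obstacle is the asymptotic step: while the monotonicity is essentially algebraic once \eqref{key-bound-plan} is in place, extracting the sharp constant $n-1$ requires careful bookkeeping of two competing gradient corrections, which is precisely where the improved estimates of Section~4 (Propositions~\ref{prop4.10} and~\ref{C2phi}) are indispensable.
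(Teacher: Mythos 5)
Your proof is correct, and it takes a genuinely different route from the paper's. The paper introduces auxiliary quantities $\mathbb{J}(\Sigma_t)=n\int_{\Omega_t}V_{\k,m}\,dvol$ and $\mathbb{K}(\Sigma_t)$, proves the auxiliary monotonicity \eqref{monoto.} of their difference, and then must split into two cases according to whether $\mathbb{J}(\Sigma_t)\le\mathbb{K}(\Sigma_t)$ persists for all $t$ or fails at some finite $t_1$; the monotonicity of the paper's $Q_2$ holds only on the interval where $\mathbb{J}\le\mathbb{K}$, so in Case~1 the argument hands off to $Q_1(t_1)$ and Lemma~\ref{lemm4.3}, while in Case~2 it does the asymptotic analysis directly. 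Your approach eliminates the case split entirely: substituting Theorem~\ref{thm4.1} and the divergence identity \eqref{eq_2_thm4.1} yields the unconditional bound \eqref{key-bound-plan} on $\int_{\Sigma_t}p\,d\mu$, which then feeds into the evolution inequality from \eqref{eq_1_thm4.1} to give $\tilde I'(t)\le\tfrac{n}{n-1}\tilde I(t)$ on all of $[0,\infty)$; I verified the algebra and it does collapse exactly as claimed (using $2m=\rho_0^n+\kappa\rho_0^{n-2}$). The asymptotic step $\liminf_{t\to\infty}Q(t)\ge n-1$ is then exactly the paper's Case~2 computation reducing to the Beckner/H\"older inequality \eqref{aim''}, so nothing new is needed there. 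What your route buys is a single monotone functional valid throughout, avoiding the two auxiliary quantities and the dichotomy; what the paper's route buys is the separate monotonicity \eqref{monoto.} as a byproduct, which motivates the conjectural comparison \eqref{believe} between $\mathbb{J}$ and $\mathbb{K}$ discussed in Remark~\ref{rem5.6}. Two small points to be precise about in a final write-up: (i) you should note that Theorem~\ref{thm4.1} applies at each time $t$ because Propositions~\ref{C0}--\ref{C1'} guarantee $\Sigma_t$ remains star-shaped with $H>0$; (ii) in the equality case, equality in \eqref{key-bound-plan} also forces equality in Theorem~\ref{thm4.1} at $t=0$, which already yields that $\Sigma$ is a slice, so you may invoke the rigidity of Theorem~\ref{thm4.1} directly rather than re-deriving umbilicity along the flow.
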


\begin{rema}\label{rem5.6}
 Theorem \ref{thm5.4} should be stronger than Theorem \ref{thm4.1}.  In fact we believe the following
\beq\label{believe}
n\int_{\Omega}  V_{\k,m} d vol\leq\vartheta_{n-1}\left(\left( \frac{|\Sigma|}{\vartheta_{n-1}}\right)^{\frac{n}{n-1}}-
\left(\frac{|\partial M|}{\vartheta_{n-1}}\right)^{\frac{n}{n-1}} \right),
\eeq
holds.
%\end{rema}
%Let $\S$ evolve under flow \eqref{flow}.

\end{rema}

To simplify the notation,  we define
$$\mathbb{J}(\Sigma_t):=n\int_{\Omega_t}  V_{\k,m} d vol\quad\mbox{and}\quad\mathbb{K}(\Sigma_t):=\vartheta_{n-1}\left(\left( \frac{|\Sigma_t|}{\vartheta_{n-1}}\right)^{\frac{n}{n-1}}-
\left(\frac{|\partial M|}{\vartheta_{n-1}}\right)^{\frac{n}{n-1}} \right).$$
 By \eqref{eq_1'_thm4.1} and \eqref{eq_3_thm4.1}, we have
\begin{eqnarray*}
\frac{d}{dt}\int_{\Omega_t}  nV_{\k,m} dvol &=&  \int_{\S_t} n\frac{ V_{\k,m}}{H}d\mu\\
&\geq &\frac{n^2}{n-1}\int_{\Omega_t}  V_{\k,m} dvol+\frac{n}{n-1}\rho_0^n\vartheta_{n-1}.
\end{eqnarray*}
Hence \begin{eqnarray*}
\frac{d}{dt}\left(n\int_{\Omega_t}  V_{\k,m}dvol+\rho_0^n\vartheta_{n-1}\right)\geq \frac{n}{n-1}\left(n\int_{\Omega_t}  V_{\k,m} dvol+\rho_0^n\vartheta_{n-1}\right).
\end{eqnarray*}
Taking into account of  Lemma \ref{evolv.lemm} (1), we find that
\beq\label{monoto.}
\frac{d}{dt}\frac{\mathbb{J}(\Sigma_t)-\mathbb{K}(\Sigma_t)}{\left(\frac{|\S_t|}{\vartheta_{n-1}}\right)^{\frac{n}{n-1}}}\geq0.
\eeq
With this monotonicity, one could prove (\ref{believe}) by analyzing the limit case.  More precisely
if one can prove
\beq\label{limit0}
\lim_{t\to \infty}\frac{\mathbb{J}(\Sigma_t)-\mathbb{K}(\Sigma_t)}{\left(\frac{|\S_t|}{\vartheta_{n-1}}\right)^{\frac{n}{n-1}}}\geq0,
\eeq
then \eqref{believe} follows from \eqref{monoto.}. This is the strategry used in \cite{dLG3}. However, to prove \eqref{limit0} one needs much sharper decay esitmates than that given in Proposition \ref{prop4.10}  and Proposition \ref{C2phi}. Here we would like to avoid to do in this way. Instead we use Theorem \ref{thm4.1}
and the sharp version of the Beckner inequaltiy.
%\end{rema}

\

\noindent{\it Proof of Theorem \ref{thm5.4}.}
It suffices to show when the initial surface $\Sigma$ satisfies \begin{eqnarray}
\mathbb{J}(\Sigma)\leq \mathbb{K}(\Sigma),
\end{eqnarray}
otherwise the assertion follows directly from Theorem \ref{thm4.1}.

By the monotonicity  (\ref{monoto.}),  we divide the proof into two cases.

\noindent\textbf{Case 1:}
there exists some $t_1\in(0,\infty)$ such that
$$\mathbb{J}(\Sigma_{t_1})-\mathbb{K}(\Sigma_{t_1})=n\int_{\Omega_t}  V_{\k,m} dvol+\rho_0^n\vartheta_{n-1}-\vartheta_{n-1}\left(\frac{|\S_{t_1}|}{\vartheta_{n-1}}\right)^{\frac{n}{n-1}}= 0.$$
and $$\mathbb{J}(\Sigma_{t})-\mathbb{K}(\Sigma_{t})=n\int_{\Omega_t}  V_{\k,m} dvol+\rho_0^n\vartheta_{n-1}-\vartheta_{n-1}\left(\frac{|\S_{t}|}{\vartheta_{n-1}}\right)^{\frac{n}{n-1}}\leq 0\hbox{ for }t\in [0,t_1].$$

 From (\ref{eq_2_thm4.1}), we know that
$$\int_{\S_{t}} p d\mu-(n-2)m\vartheta_{n-1}-\o_{n-1}\left(\frac{|\S_{t}|}{\vartheta_{n-1}}\right)^{\frac{n}{n-1}}\leq 0\hbox{ for }t\in [0,t_1].$$

For $t\in [0,t_1]$, by \eqref{eq_1_thm4.1}, we check that

\begin{eqnarray*}
&&\frac{d}{dt}\left(\int_{\S_t} V_{\k,m}Hd\mu+2(n-1)m\vartheta_{n-1}-(n-1)\vartheta_{n-1}\left(\frac{|\S_t|}{\vartheta_{n-1}}\right)^{\frac{n}{n-1}}\right)\\
&\leq&  \frac{n-2}{n-1}\int_{\S_t} V_{\k,m}Hd\mu+2 \int_{\S_t} p d\mu-n\vartheta_{n-1}\left(\frac{|\S_t|}{\vartheta_{n-1}}\right)^{\frac{n}{n-1}}
\\&=&\frac{n-2}{n-1}\left(\int_{\S_t} V_{\k,m}Hd\mu-(n-1)\vartheta_{n-1}\left(\frac{|\S_t|}{\vartheta_{n-1}}\right)^{\frac{n}{n-1}}\right)\\&&+2\int_{\S_t} p d\mu-2\vartheta_{n-1}\left(\frac{|\S_t|}{\vartheta_{n-1}}\right)^{\frac{n}{n-1}}\\&\leq &\frac{n-2}{n-1}\left(\int_{\S_t} V_{\k,m}Hd\mu-(n-1)\vartheta_{n-1}\left(\frac{|\S_t|}{\vartheta_{n-1}}\right)^{\frac{n}{n-1}}+2(n-1)m\vartheta_{n-1}\right).
\end{eqnarray*}

Hence the quantity

$$Q_2(t):=\frac{\int_{\S_t} V_{\k,m}Hd\mu+2(n-1)m\vartheta_{n-1}-(n-1)\vartheta_{n-1}\left(\frac{|\S_t|}{\vartheta_{n-1}}\right)^{\frac{n}{n-1}}}{\left(\frac{|\S_t|}{\vartheta_{n-1}}\right)^{\frac{n-2}{n-1}}}$$ is nonincreasing for $t\in [0,t_1]$.

Using (\ref{m}) and Theorem \ref{thm4.1}, we obtain

$$
Q_2(0)\geq Q_2(t_1)
=Q_1(t_1)\geq (n-1)\k\vartheta_{n-1}.
$$

\noindent\textbf{Case 2:}
For all $t\in[0,\infty)$, we have
$$\mathbb{J}(\Sigma_{t})-\mathbb{K}(\Sigma_{t})\leq 0.$$
>From above, we know that $Q_2(t)$ is monotone non-increasing in $t\in[0,\infty)$.
Thus it suffices to show that
\beq\label{aim}
{\lim \inf}_{t\rightarrow\infty}Q_2(t)\geq (n-1)\k\vartheta_{n-1}^{\frac 1{n-1}}.
\eeq
By the H\"{o}lder inequality and (\ref{eq5_C}) we have
\begin{equation}\label{eq01_C}
 \vartheta_{n-1}\left(\frac{|\Sigma(t)|}{\vartheta_{n-1}}\right)^{n/(n-1)}\le\int_{N}(\sqrt{\det(g)})^{n/(n-1)}=\int_{N} \l_{\k}^{n}(1+\frac{n}{2(n-1)}\l_{\k}^{-4}|\nabla \l_{\k}|^ 2+O(e^{-\frac{4t}{n-1}})).
\end{equation}
Combining (\ref{eq4_C}) and (\ref{eq01_C}), we note that (\ref{aim}) is reduced to prove
\beq\label{aim''}
(n-1)\k\int_{N} \l_{\k}^{n-2}+\frac{n-2}{2}\int_{N}\l_{\k}^{n-4}|\nabla \l_{\k}|^2
\geq(n-1)\k {\vartheta_{n-1}}^{\frac 1{n-1}}\left( \int_{N} \l_{\k}^{n-1}\right)^{\frac{n-2}{n-1}}.
\eeq
When $\k=1$,  \eqref{aim''} follows from  the sharp version of Beckner's Sobolev type inequality on $\mathbb{S}^{n-1}$. See also Remark \ref{rem0}.
When $\k=-1$, by the H\"{o}lder inequality, we have $$\int_{N} \l_{\k}^{n-2}\leq{\vartheta_{n-1}}^{\frac 1{n-1}}\left( \int_{N} \l_{\k}^{n-1}\right)^{\frac{n-2}{n-1}},$$ which implies
(\ref{aim''}).  When $\k=0$,  \eqref{aim''} is trivial.
Hence we show (\ref{aim}). It is easy to show that
equality implies that $\Sigma$ is geodesic.  We complete the proof.

\qed


\begin{thebibliography}{abcde}
\bibitem{ALM} L. J. Al\'ias, J. H. S. de Lira and J. M. Malacarne, {\it Constant higher-order mean curvature
hypersurfaces in Riemannian spaces,} J. Inst. Math. Jussieu, \textbf{5} (2006), no. 4, 527-562.
\bibitem{Beckner} W. Beckner, {\it Sharp Sobolev inequalities on the sphere and the Moser-Trudinger inequality,} Ann. of Math. \textbf{138}, 213-242 (1993).
\bibitem{BP}
H. L. Bray, \emph{Proof of the Riemannian Penrose inequality using the positive mass theorem}, J. Differential Geom. \textbf{59} (2001), no. 2, 177--267.
\bibitem{Bray} H. L. Bray, \emph{On the positive mass, Penrose, an ZAS inequalities in general dimension}, Surveys in Geometric Analysis and Relativity, Adv. Lect. Math. (ALM), \textbf{20}, Int. Press, Somerville, MA, (2011).

\bibitem{BC}  H. L. Bray and P. T. Chru\'sciel,  The Penrose inequality, {\em The Einstein equations and the large scale behavior of gravitational fields}, 39-70, Birkh\"auser, Basel, (2004).

\bibitem{BK} H. L. Bray and M. A.  Khuri, {\it P.D.E.'s which imply the Penrose conjecture,}  Asian J. Math. \textbf{15}  (2011),  no. 4, 557--610.
\bibitem{BL}
H. L. Bray and D. A. Lee, \emph{On the Riemannian Penrose inequality in dimensions less than eight}, Duke Math. J. \textbf{148} (2009), no. 1, 81--106.
\bibitem{Brendle} S. Brendle, {\it Hypersurfaces of constant mean curvature in deSitter-Schwarzschild space,} \textbf{arXiv:1105.4273}.
\bibitem{BHW} S. Brendle, P.-K. Hung, and M.-T. Wang, {\it A Minkowski-type inequality for hypersurfaces in the Anti-deSitter-Schwarzschild manifold},\textbf{ arXiv: 1209.0669}.
\bibitem{CH}  P. Chru\'sciel and M.  Herzlich, {\it The mass of asymptotically hyperbolic Riemannian manifolds,}  Pacific J. Math.  \textbf{212}  (2003),  no. 2, 231--264.
\bibitem{CN}  P. Chru\'sciel and G. Nagy, {\it The mass of spacelike hypersurface in asymptotically anti-de Sitter space-times}, Adv. Theor. Math. Phys. {\bf 5} (2002), 697-754,gr-qc/0110014.
\bibitem{CS}    P. Chru\'sciel and W. Simon, {\it Towards the classification of static vacuum spacetimes with negative
cosmological constant,} J. Math. Phys. \textbf{42} (2001), no. 4, 1779-1817.
\bibitem{DGS}  M. Dahl, R. Gicquaud and A. Sakovich, {\it Penrose type inequalities for asymptotically hyperbolic graphs,}  Annales Henri Poincar\'e (2012), DOI 10.1007/s00023-012-0218-4.
\bibitem{dLG1}
L. L. de Lima and F. Gir{\~a}o, \emph{The {ADM} mass of
  asymptotically flat hypersurfaces}, \textbf{arXiv:1108.5474}.





\bibitem{dLG2}
L. L. de Lima and F. Gir{\~a}o, \emph{Positive mass and Penrose type inequalities for asymptotically hyperbolic hypersurfaces}, \textbf{arXiv:1201.4991}.

\bibitem{dLG3} L. L. de Lima and F. Gir\~ao, {\it An Alexandrov-Fenchel-type inequality in hyperbolic space with an application to a Penrose inequality},
\textbf{arXiv:12090669.}
\bibitem{dLG4}
L. L. de Lima and F. Gir{\~a}o, \emph{A penrose inequalities for asymptotically locally hyperbolic graphs}, \textbf{arXiv:1304.7882}.
\bibitem{Ding} Q. Ding, {\it The inverse mean curvature flow in rotationally symmetric spaces,} {\em Chin. Ann. Math. Ser. B}, \textbf{32} (2011), no. 1, 27-44.

\bibitem{FS} A. Freire and F. Scwartz,
{\it Mass-capacity inequalities for conformally flat manifolds with boundary,}
{\bf arXiv:1107.1407}.


\bibitem{Gerhardt} C. Gerhardt, {\it Inverse curvature flows in hyperbolic space}, J. Differential Geom. \textbf{89} (2011), no. 3, 487--527.
\bibitem{GWW1} Y. Ge, G. Wang and J. Wu, {\it A new mass for asymptotically flat manifolds,} \textbf{arXiv:1211.3645}.
\bibitem{GWW2} Y. Ge, G. Wang and J. Wu, {\it Hyperbolic Alexandrov-Fenchel quermassintegral inequalities I}, \textbf{arXiv:1303.1714}.

\bibitem{GWW3} Y. Ge, G. Wang and J. Wu, {\it Hyperbolic Alexandrov-Fenchel quermassintegral inequalities II}, \textbf{ arXiv:1304.1417}.

\bibitem{GWW4} Y. Ge, G. Wang and J. Wu, {\it The GBC mass for asymptotically hyperbolic manifolds,} \textbf{ arXiv:1306.4233}.

\bibitem{Gibbons} G. W. Gibbons, {\it Some comments on gravitational entropy and the inverse mean curvature
flow,} Classical Quantum Gravity \textbf{16} (1999), no. 6, 1677-1687.
\bibitem{Herzlich1} M. Herzlich, {\it A Penrose-like inequality for the mass of Riemannian asymptotically flat manifolds,} Commun. Math. Phys., \textbf{188} (1997), 121-133.
\bibitem{Herzlich} M. Herzlich, {\it Mass formulae for asymptotically hyperbolic manifolds},
AdS/CFT correspondence: Einstein metrics and their conformal
boundaries, 103-121, Eur. Math. Soc., Z\"{u}rich, (2005).\bibitem{HI}G. Huisken and T. Ilmanen, {\it The inverse mean curvature flow and the Riemannian Penrose inequality,} J. Diff. Geom. \textbf{59} (2001),
353--437.
\bibitem{HuangWu}L.-H. Huang and D. Wu,
{\it The equality case of the Penrose inequality for asymptotically flat graphs}, {\bf arXiv:1205.2061}.
\bibitem{Lam}M.-K. G. Lam, {\it The graph cases of the Riemannian positive mass and Penrose inequality in all dimensions}, {\bf arXiv.org/1010.4256}.
\bibitem{LWX} H. Li, Y. Wei and C. Xiong, {\it A geometric ineqality on hypersurface in hyperbolic space}, \textbf{arXiv:1211.4109}.
\bibitem{LWX2} H. Li, Y. Wei and C. Xiong,
{\it The Gauss-Bonnet-Chern mass for graphic manifolds,}  {\bf
arXiv:1304.6561}.

\bibitem{LN} D. Lee and A. Neves, {\it A static uniqueness theorem for higher genus Kottler metrics}, slides of a
talk delivered at Tsinghua Sanya International Mathematical Forum.
\bibitem{Mars} M. Mars, {\it Topical  review: Present status of the Penrose inequality,} Classical and Quantum Gravity, \textbf{26} (2009), no.19, 193001.

\bibitem{MV}  H. Mirandola and F. Vit\'orio,
{\it The positive mass Theorem and Penrose inequality for graphical manifolds,}
 {\bf arXiv:1304.3504}.

\bibitem{M} B. Michel, {\it Geometric invariance of mass-like asymptotic invariants,} J. Math. Phys. \textbf{52} (2011), no. 5.
\bibitem{Neves} A. Neves, {\it Insufficcient convergence of inverse mean curvature flow on asymptotically hyperbolic manifolds,} J. Differential Geom \textbf{84} (2010), no.1, 711-748.
\bibitem{Schneider} R. Schneider, {\it Convex bodies: The Brunn-Minkowski theory}, Cambridge University, (1993), MR1216521.

\bibitem{SY}R. Schoen and S.-T. Yau, {\it On the proof of the positive mass conjecture in general relativity,} Commun. Math. Phys., \textbf{65} (1979),  45-76.
\bibitem{Wang} X. Wang, {\it Mass for asymptotically hyperbolic manifolds,} J.Diff.Geom. \textbf{57} (2001), 273-299.
\bibitem{WX} G. Wang and C. Xia, {\it  Isoperimetric type problems and
Alexandrov-Fenchel type inequalities in  the hyperbolic space,} \textbf{arXiv:1304.1674}.


\bibitem{W} E. Witten, {\it A new proof of the positive energy theorem, Commun. Math. Phys.,} \textbf{80} (1981), 381-402.
\end{thebibliography}
\end{document}